\newcommand{\s}[1]{^{\mbox{\protect\tiny {\rm #1}}}}
\newcommand{\Proj}[1]{{\rm Proj}(#1)}
\newcommand{\algname}{fast reduced space algorithm}
\newcommand{\algacro}{FaRSA{}}
\algrenewcommand{\algorithmiccomment}[1]{\hfill[#1]}
\title{A Reduced-Space Algorithm for Minimizing $\ell_1$-Regularized Convex Functions}
\author{
Tianyi Chen\footnotemark[3]~\footnotemark[5]
\and
Frank E. Curtis\footnotemark[2]
\and
Daniel P. Robinson\footnotemark[4]~\footnotemark[5]
}
\date{\today}
\begin{document}

\maketitle

\renewcommand{\thefootnote}{\fnsymbol{footnote}}
\footnotetext[2]{Department of Industrial and Systems Engineering, Lehigh University, Bethlehem, PA, USA. E-mail: \href{mailto:frank.e.curtis@gmail.com}{frank.e.curtis@gmail.com}.  This author was supported by the U.S.~Department of Energy, Office of Science, Applied Mathematics, Early Career Research Program under Award Number DE--SC0010615 and by the U.S.~National Science Foundation, Division of Mathematical Sciences, Computational Mathematics Program under Award Number DMS--1016291.}
\footnotetext[3]{Department of Applied Mathematics and Statistics, Johns Hopkins University, Baltimore, MD, USA. Email: \href{mailto:tchen59@jhu.edu}{tchen59@jhu.edu}.}
\footnotetext[4]{Department of Applied Mathematics and Statistics, Johns Hopkins University, Baltimore, MD, USA. Email: \href{mailto:daniel.p.robinson@gmail.com}{daniel.p.robinson@gmail.com}.}
\footnotetext[5]{This author was supported by IDIES Seed Funding at Johns Hopkins University.}
\renewcommand{\thefootnote}{\arabic{footnote}}

\begin{abstract}
We present a new method for minimizing the sum of a differentiable convex function and an $\ell_1$-norm regularizer.  The main features of the new method include: $(i)$ an evolving set of indices corresponding to variables that are predicted to be nonzero at a solution (i.e., the support); $(ii)$ a reduced-space subproblem defined in terms of the predicted support; $(iii)$ conditions that determine how accurately each subproblem must be solved, which allow for Newton, Newton-CG, and coordinate-descent techniques to be employed; $(iv)$ a computationally practical condition that determines when the predicted support should be updated; and $(v)$ a reduced proximal gradient step that ensures sufficient decrease in the objective function when it is decided that variables should be added to the predicted support.  We prove a convergence guarantee for our method and demonstrate its efficiency on a large set of model prediction problems.
\end{abstract}
\begin{keywords}
nonlinear optimization, convex optimization, sparse optimization, active-set methods, reduced-space methods, subspace minimization, model prediction
\end{keywords}
\begin{AMS}
90C06,
90C25,
90C30,
90C55,
90C90,
49J52,
49M37,
62--07,
62M20,
65K05
\end{AMS}

\section{Introduction}\label{sec.introduction}

In this paper, we propose, analyze, and provide the results of numerical experiments for a new method for solving $\ell_1$-norm regularized convex optimization problems of the form
\bequation\label{prob.x}
  \minimize{x\in\R{n}} \mgap F(x), \mgap \text{where} \mgap F(x) := f(x) + \lambda\onenorm{x},
\eequation
$f:\Re^n \rightarrow \Re$ is a twice continuously differentiable convex function, and $\lambda > 0$ is a weighting parameter.  A necessary and sufficient optimality condition for~\eqref{prob.x} is
\begin{equation}\label{optcond.x}
0 \in \partial F(x) = \Grad f(x) + \lambda \partial\onenorm{x}
\end{equation}
with $\partial F$ and $\partial \onenorm{\cdot}$ denoting the subdifferentials of $F$ and $\onenorm{\cdot}$, respectively.  Our method for solving~\eqref{prob.x} generates a sequence of iterates such that any limit point of the sequence satisfies~\eqref{optcond.x}.  It is applicable when only first-order derivative information is computed, but is most effective when one can at least approximate second-order derivative matrices, e.g., using limited-memory quasi-Newton techniques.

Problems of the form~\eqref{prob.x} routinely arise in statistics, signal processing, and machine learning applications, and are usually associated with data fitting or maximum likelihood estimation.  A popular setting is binary classification using logistic regression (where $f$ is a logistic cost function), although instances of such problems also arise when performing multi-class logistic regression and profit regression.  Instances of~\eqref{prob.x} also surface when using LASSO or elastic-net formulations to perform data analysis and discovery, such as in unsupervised subspace clustering on data drawn from a union of subspaces.

\subsection{Literature review and our key contributions} \label{subsec.literature}

Popular first-order optimization methods for solving~\eqref{prob.x} include ISTA, FISTA, and SpaRSA~\cite{beck2009fast,wright2009sparse}.  Second-order methods have also been proposed, which can roughly be split into the classes of proximal-Newton methods~\cite{byrd2013inexact,hsieh2011sparse,lee2012proximal,scheinberg2013practical,yuan2012improved} and orthant-based methods~\cite{andrew2007scalable,byrd2012family,keskar2015second}. Proximal-Newton methods solve problem~\eqref{prob.x} by minimizing a sequence of subproblems formed as the sum of a quadratic approximation to $f$ and the nonsmooth $\ell_1$-norm regularizer.  For example, the state-of-the-art software LIBLINEAR, which implements newGLMNET~\cite{yuan2012improved}, uses a coordinate descent algorithm to approximately minimize each piecewise quadratic subproblem.  Orthant-based methods, on the other hand, minimize smooth quadratic approximations to~\eqref{prob.x} over a sequence of orthants in $\Re^n$ until a solution is found. Of particular interest is the recently proposed orthant-based method OBA~\cite{keskar2015second} in which every iteration consists of a corrective cycle of orthant predictions and subspace minimization steps. OBA was shown to be slower than LIBLINEAR  when the Hessian matrices were diagonally dominant, but faster otherwise, at least on the collection of test problems considered in~\cite{keskar2015second}.

Since LIBLINEAR and OBA are the most relevant to the algorithm described in this paper, let us discuss their respective advantages and disadvantages in more detail.  The key advantage of LIBLINEAR is its use of a coordinate descent (CD) algorithm to approximately minimize the piecewise quadratic subproblem.  The use of CD means that one should expect excellent performance on problems whose Hessian matrices are strongly diagonally dominant. This expectation was confirmed, as mentioned above, by the OBA paper~\cite{keskar2015second}.  For some problems encountered in model prediction, e.g., when using logistic regression to perform classification, the Hessians are often strongly diagonally dominant, at least after certain data scaling techniques are used.   However, not all prediction problems have such nice diagonal dominance properties, and in some instances the user would prefer to avoid discovering a proper scaling for their data.  In these latter cases, the OBA method is typically superior.

Another potential advantage of the OBA method is its use of an active-set strategy that uses subproblems that are smaller in dimension than the ambient space.  For many $\ell_1$-norm regularized prediction problems, the number of nonzero components in a solution is a small percentage of the ambient dimension, and thus OBA spends most of its time solving small dimensional problems.  This is an advantage, at least when the zero and nonzero structure of the solution is quickly identified.

We have the perspective that both LIBLINEAR and OBA are valuable state-of-the-art algorithms that complement each other.  Our \emph{\algname{}} (\algacro) is designed to capitalize on the advantages of both while avoiding their disadvantages.  The following bulleted points summarize our key contributions.
\begin{romannum}
\item
We present a new active-set line search method that utilizes reduced-space subproblems, approximate solutions of which can be computed efficiently.
\item
Unlike the active-set OBA method, our method does not require the computation of an ISTA step during each iteration to ensure convergence.  We achieve convergence by combining a new projected backtracking line search procedure, an approximate subspace minimization scheme, and a mechanism for determining when the support of the solution estimate should be updated.
\item
Our framework is flexible.  In particular, we introduce a new set of conditions that signal how accurately each subproblem should be solved and allow for various subproblem solvers to be used.  In so doing, our method easily accommodates a Newton-CG subproblem solver as in OBA and a CD solver as in LIBLINEAR. Interestingly, this allows for multiple subproblem solvers to be used in parallel, thus allowing for numerical performance that can be as good as either LIBLINEAR and OBA regardless of whether the problem Hessians are strongly diagonally dominant.
\item
As demonstrated in the numerical experiments described in this paper, the practical performance of our method is state-of-the-art.
\end{romannum}

We end this review by remarking that our proposed algorithm has similarities with the iterative method that one would obtain using the following procedure: $(i)$ at a given iterate $x_k$, construct a quadratic model of $f$ and recast the minimization of this model plus the regularization term $\lambda \|x\|_1$ into a bound-constrained quadratic optimization problem (similarly to the procedure in SpaRSA), $(ii)$ approximately solve this subproblem using the techniques developed in \cite{Dost97,Dost03,DostScho05} (see also \cite{hassanLovesPoetry}), and $(iii)$ translate the resulting solution back into the space of $x$ variables to produce a trial step from $x_k$, call it $d_k$.  Indeed, our initial developments of this work was based on these ideas.  However, the algorithm proposed in this paper involves some deviations and enhancements from this starting point.

\subsection{Notation}

Let $\Iscr\subseteq\{1,2,\dots,n\}$ denote an index set of variables.  For any $v\in\Re^n$, we let $[v]_\Iscr$ denote the subvector of $v$ consisting of elements of $v$ with indices in $\Iscr$.  Similarly, for any symmetric matrix $M\in\Re^{n\times n}$, we let $[M]_{\Iscr,\Iscr}$ denote the submatrix of $M$ consisting of the rows and columns of $M$ that correspond to the index set $\Iscr$. For any vector $v$, we let $\text{sgn}(v)$ denote the vector of the same length as $v$ whose $i$th component is $0$ when $[v]_i = 0$, is $1$ when $[v]_i > 0$, and is $-1$ when $[v]_i < 0$.  For any vector $v$, we let $\|v\|_1$ and $\norm{v}$ denote its $\ell_1$-norm and $\ell_2$-norm, respectively.

\section{Algorithm \algacro}\label{sec.algorithm}

Crucial to our algorithm is the manner in which we handle the zero and nonzero components of a solution estimate.  In order to describe the details of our approach, we first define the index sets
\begin{equation*} 
\begin{aligned}
\Iscr^0(x) :=  \{i:[x]_i = 0\}, \  \
\Iscr^+(x) := \{i: [x]_i > 0\}, \ \ \text{and}  \ \
 \Iscr^-(x) :=  \{i: [x]_i < 0\}.
\end{aligned}
\end{equation*}
We call $\Iscr^0(x)$ the set of \emph{zero variables}, $\Iscr^+(x)$ the set of \emph{positive variables}, $\Iscr^-(x)$ the set of \emph{negative variables}, and the union of $\Iscr^-(x)$ and $\Iscr^+(x)$ the set of \emph{nonzero variables} at $x$.  We use these sets to define measures of optimality corresponding to the zero and nonzero variables at $x$.  Respectively, these measures are as follows:
\begin{equation*}
\begin{aligned}
& [\beta(x)]_i :=
\begin{cases}
[\Grad f(x)]_i + \lambda  & \text{if $i\in\Iscr^0(x)$ and $[\Grad f(x)]_i + \lambda < 0$,} \\
[\Grad f(x)]_i - \lambda  & \text{if $i\in\Iscr^0(x)$ and $[\Grad f(x)]_i - \lambda > 0$,} \\
\qquad 0 & \text{otherwise;}
\end{cases} \\
& [\phi(x)]_i := \\
& \begin{cases}
\qquad 0 & \text{if $i\in\Iscr^0(x)$,} \\
\min\{[\Grad f(x)]_i+\lambda, \max\{[x]_i,[\Grad f(x)]_i-\lambda\} \}  & \text{if $i\in\Iscr^+(x)$ and $[\Grad f(x)]_i+\lambda > 0$,} \\
\max\{[\Grad f(x)]_i - \lambda, \min\{[x]_i,[\Grad f(x)]_i + \lambda\} \}  & \text{if $i\in\Iscr^-(x)$ and $[\Grad f(x)]_i - \lambda < 0$,} \\
[\Grad f(x) + \lambda\cdot \text{sgn}(x)]_i  & \text{otherwise.}
\end{cases}
\end{aligned}
\end{equation*}

The following result shows that the functions $\beta$ and $\phi$ together correspond to a valid optimality measure for problem~\eqref{prob.x}.

\begin{lemma} \label{lem:criticality}
Let $\Sscr$ be an infinite set of positive integers such that $\{x_k\}_{k\in\Sscr}\to x_*$.  Then, $x_*$ is a solution to~\eqref{prob.x} if and only if $\{\beta(x_k)\}_{k\in\Sscr} \to 0$ and $\{\phi(x_k)\}_{k\in\Sscr} \to 0$.  Consequently, $x_*$ is a solution to~\eqref{prob.x} if and only if $\|\beta(x_*)\| = \|\phi(x_*)\| = 0$.
\end{lemma}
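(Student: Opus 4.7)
The plan is to prove the equivalence of the two characterizations by first establishing the pointwise ``consequently'' claim (which is easier because it reduces to checking that $\beta(x_*)=0$ and $\phi(x_*)=0$ recover the optimality condition~\eqref{optcond.x}), and then deducing the sequential statement from it together with a careful continuity analysis of $\beta$ and $\phi$ at a solution.

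For the consequently part, I would argue by direct case analysis on each index $i$. The condition $0\in \partial F(x_*)$ splits componentwise: for $i\in\Iscr^0(x_*)$ it reads $|[\Grad f(x_*)]_i|\leq \lambda$, and for $i\notin \Iscr^0(x_*)$ it reads $[\Grad f(x_*)]_i+\lambda\, \text{sgn}([x_*]_i)=0$. Inspection of the defining branches shows $[\beta(x_*)]_i=0$ iff the first of these holds for each $i\in\Iscr^0(x_*)$, and $[\phi(x_*)]_i=0$ iff the second holds for each $i\notin\Iscr^0(x_*)$; the only mildly delicate point is the branch for $i\in\Iscr^+(x_*)$ with $[\Grad f(x_*)]_i+\lambda>0$, where one must check that the inner $\max$ is strictly positive because $[x_*]_i>0$, so that $[\phi(x_*)]_i>0$ and cannot be zero in that branch. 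Putting the two equivalences together gives $\|\beta(x_*)\|=\|\phi(x_*)\|=0$ iff $x_*$ solves~\eqref{prob.x}.

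For the main ``$\Rightarrow$'' direction, assume $x_*$ solves~\eqref{prob.x} and $x_k\to x_*$ along $\Sscr$. Fix an index $i$ and analyze three regimes according to the sign of $[x_*]_i$. If $[x_*]_i\neq 0$, then eventually $\text{sgn}([x_k]_i)=\text{sgn}([x_*]_i)$, so $[\beta(x_k)]_i=0$ for large $k$; the optimality condition $[\Grad f(x_*)]_i + \lambda\,\text{sgn}([x_*]_i)=0$ together with continuity of $\Grad f$ then forces $[\phi(x_k)]_i\to 0$ in either of the two branches used to define $\phi$ (both branches agree at the boundary $[\Grad f(x_k)]_i+\lambda=0$, which is the key consistency point). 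If $[x_*]_i=0$, partition the sequence into the sub-subsequences where $[x_k]_i$ is positive, negative, or zero, and treat each separately using $|[\Grad f(x_*)]_i|\leq \lambda$; the argument is that, in each regime, the nonzero branch of $\beta$ or $\phi$ either is not triggered eventually, or produces a value sandwiched between a quantity tending to a nonpositive (resp.\ nonnegative) limit and zero, hence tends to zero.

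For ``$\Leftarrow$'' I would argue by contrapositive: if $x_*$ is not a solution, then by the consequently part either some $[\beta(x_*)]_i\neq 0$ or some $[\phi(x_*)]_i\neq 0$, and I locate an index witnessing $|[\beta(x_k)]_i|+|[\phi(x_k)]_i|$ bounded below. The typical case is $i\in\Iscr^0(x_*)$ with, say, $[\Grad f(x_*)]_i>\lambda$: a standard check of all three possibilities for $\text{sgn}([x_k]_i)$ shows each produces either $[\beta(x_k)]_i$ or $[\phi(x_k)]_i$ close to $[\Grad f(x_*)]_i-\lambda>0$, contradicting $\beta(x_k),\phi(x_k)\to 0$; the other cases are symmetric. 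The main obstacle in the whole proof is purely bookkeeping: $\beta$ and $\phi$ are piecewise-defined and discontinuous as functions of $x$, and one must verify that at a solution the discontinuities vanish (because the offending branches are shut off by the optimality condition), while at a non-solution one of the discontinuities persists and yields the desired lower bound.
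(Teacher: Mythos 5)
Your proposal is correct, and the points you flag as delicate are exactly the ones the paper's proof must also verify: the strict positivity of the inner $\max$ in the branch $i\in\Iscr^+(x_*)$ with $[\Grad f(x_*)]_i+\lambda>0$, the agreement of the two branch formulas at the boundary $[\Grad f(x_k)]_i+\lambda=0$, and the sandwich arguments for components with $[x_*]_i=0$. The difference is the logical architecture. The paper proves both implications of the sequential statement directly: for the implication from vanishing measures to optimality it fixes a component $i$, splits $\Sscr$ according to which branch of $\beta$ or $\phi$ fires infinitely often, and passes to the limit in each subcase to recover the componentwise optimality conditions; it then obtains the pointwise characterization $\|\beta(x_*)\|=\|\phi(x_*)\|=0$ as a corollary of the sequential one (take the constant sequence $x_k\equiv x_*$). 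You invert this order: you establish the pointwise claim first, prove the direction ``solution implies measures vanish'' essentially as the paper does, and prove the converse by contrapositive, exhibiting at a non-solution a component along which $|[\beta(x_k)]_i|+|[\phi(x_k)]_i|$ stays bounded below across all three sign regimes of $[x_k]_i$ (your typical case $[x_*]_i=0$, $[\Grad f(x_*)]_i>\lambda$ checks out, and the remaining cases, including $i\in\Iscr^+(x_*)$ with $[\Grad f(x_*)]_i+\lambda\neq 0$, go through analogously). The two arguments are contrapositive reorganizations of the same componentwise bookkeeping, so neither is more general; the paper's direct limit-passing is slightly more economical since the pointwise claim comes for free at the end, while your arrangement makes explicit the structural insight that $\beta$ and $\phi$ are discontinuous in $x$, with the offending branches shut off precisely at solutions and persisting, with a quantitative lower bound, at non-solutions --- which is arguably the clearer explanation of why this piecewise-defined pair is a valid optimality measure in the first place.
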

\begin{proof}
Suppose $\{\beta(x_k)\}_{k\in\Sscr} \to 0$ and $\{\phi(x_k)\}_{k\in\Sscr} \to 0$.  Then, first, consider any $i$ such that $[x_*]_i > 0$, which means that $[x_k]_i > 0$ for all sufficiently large $k\in\Sscr$.  We now consider two subcases.  If $[\Grad f(x_k)]_i + \lambda \leq 0$ for infinitely many $k\in\Sscr$, then  it follows from the definition of $\phi(x_k)$ and $\{\phi(x_k)\}_{k\in\Sscr} \to 0$ that $[\Grad f(x_*)]_i + \lambda = 0$. On the other hand, if $[\Grad f(x_k)]_i + \lambda > 0$ for infinitely many $k\in\Sscr$, then it follows from the definition of $\phi(x_k)$, $\{\phi(x_k)\}_{k\in\Sscr} \to 0$, and $[x_*]_i > 0$ that $[\Grad f(x_*)]_i + \lambda = 0$.  By combining both cases, we have established that $[\Grad f(x_*)]_i + \lambda = 0$, so that the $i$th component satisfies the optimality conditions~\eqref{optcond.x}.
A similar argument may be used for the case when one considers $i$ such that $[x_*]_i < 0$ to show that $[\Grad f(x_*)]_i - \lambda = 0$.

It remains to consider $i$ such that $[x_*]_i = 0$.  We have four subcases to consider.  First, if infinitely many $k\in\Sscr$ satisfy $[x_k]_i = 0$ and $[\Grad f(x_k)]_i +\lambda < 0$, then it follows from the definition of $\beta(x_k)$ and $\{\beta(x_k)\}_{k\in\Sscr} \to 0$ that $[\Grad f(x_*)]_i + \lambda = 0$; a similar argument shows that if infinitely many $k\in\Sscr$ satisfy $[x_k]_i = 0$ and $[\Grad f(x_k)]_i - \lambda > 0$, then  $[\Grad f(x_*)]_i - \lambda = 0$.  Second, if infinitely many $k\in\Sscr$ satisfy $[x_k]_i = 0$ and $|[\Grad f(x_k)]_i| < \lambda$, then, trivially, $|\Grad f_i(x_*)| \leq \lambda$.  Third, if infinitely many $k\in\Sscr$ satisfy $[x_k]_i > 0$ and $[\Grad f(x_k)]_i + \lambda \leq 0$, then it follows from the definition of $\phi(x_k)$ and $\{\phi(x_k)\}_{k\in\Sscr} \to 0$ that $[\Grad f(x_*)]_i + \lambda = 0$; a similar argument shows that if infinitely many $k\in\Sscr$ satisfy $[x_k]_i < 0$ and $[\Grad f(x_k)]_i - \lambda \geq 0$, then $[\Grad f(x_*)]_i - \lambda = 0$.  Fourth, if infinitely many $k\in\Sscr$ satisfy $[x_k]_i > 0$ and $[\Grad f(x_k)]_i + \lambda > 0$, then it follows from the definition of $\phi(x_k)$ and $\{\phi(x_k)\}_{k\in\Sscr} \to 0$ that $|[\Grad f(x_*)]_i| \leq \lambda$; a similar argument shows that if infinitely many $k\in\Sscr$ satisfy $[x_k]_i < 0$ and $[\Grad f(x_k)]_i - \lambda < 0$, then $|[\Grad f(x_*)]_i| \leq \lambda$.  By combining these subcases, we conclude that $|[\Grad f(x_*)]_i| \leq \lambda$, so the $i$th component satisfies the optimality condition~\eqref{optcond.x}.

To prove the reverse implication, now suppose that $x_*$ is a solution to problem~\eqref{prob.x}. If $[x_*]_i > 0$, then $[\beta(x_k)]_i = 0$ for all sufficiently large $k\in\Sscr$ and $\{[\phi(x_k)]_i\}_{k\in\Sscr} \to 0$ since $[\Grad f(x_*)]_i + \lambda = 0$.  If $[x_*]_i < 0$, then $[\beta(x_k)]_i = 0$ for all sufficiently large $k\in\Sscr$ and $\{[\phi(x_k)]_i\}_{k\in\Sscr} \to 0$  since $[\Grad f(x_*)]_i - \lambda = 0$.  Finally, if $[x_*]_i = 0$, then $|[\Grad f(x_*)]_i| \leq \lambda$, which with the definitions of $\beta(x_k)$ and $\phi(x_k)$ implies that $\{[\beta(x_k)]_i\}_{k\in\Sscr} \to 0$ and $\{[\phi(x_k)]_i\}_{k\in\Sscr} \to 0$.
\end{proof}

We now state our proposed method, \algacro, as Algorithm~\ref{alg:main.x}.  When considering a reduced-space subproblem defined by a chosen index set $\Ical_k$ (see lines~\ref{line:Ik-phi} and \ref{line:Ik-beta}), the algorithm makes use of a quadratic model of the objective of the form (see line~\ref{line:dbar})
\bequationn
  m_k(d) := g_k\T d + \half d\T H_k d.
\eequationn
\algacro\ also makes use of two line search subroutines, stated as Algorithms~\ref{alg:ls-phi} and \ref{alg:ls-beta}, the former of which employs the following projection operator dependent on $x_k$:
$$
[\Proj{y \,; x_k}]_i :=
\begin{cases}
  \max\{0,[y]_i\} & \text{if $\Iscr^+(x_k)$,} \\
  \min\{0,[y]_i\} &  \text{if $\Iscr^-(x_k)$,}  \\
  0                    &  \text{if $\Iscr^0(x_k)$.}
\end{cases}
$$

\balgorithm[ht]
  \caption{\algacro\ for solving problem~\eqref{prob.x}.}
  \label{alg:main.x}
  \balgorithmic[1]
  \State \textbf{Input:} $x_0$
  \State \textbf{Constants:} $\{\eta_\phi,\eta_\beta,\xi\}\subset (0,1]$,  $\eta\in(0,1/2]$, and $\{\gamma,\epsilon\}\subset(0,\infty)$
  \For{$k = 0,1,2,\dots$}
    \If{$\max\{ \norm{\beta(x_k)},  \norm{\phi(x_k)} \} \leq \epsilon$} \label{line:check-termination}
       \State \textbf{Return} the (approximate) solution $x_k$ of problem~\eqref{prob.x}. \label{line:return}
    \EndIf
     \If{$\norm{\beta(x_k)} \leq \gamma\norm{\phi(x_k)}$} \label{line:main.if} \Comment{$k\in\Sscr_\phi$}
         \State Choose any $\Iscr_k\subseteq\{i:[\phi(x_k)]_i \neq 0\}$ such that $\norm{[\phi(x_k)]_{\Iscr_k}} \geq \eta_\phi\norm{\phi(x_k)}$.  \label{line:Ik-phi}
         \State Set $H_k \gets  [\Hess F(x_k)]_{\Iscr_k\Iscr_k}$ and $g_k \gets [\Grad F(x_k)]_{\Iscr_k}$. \label{line:gH}
         \State Compute the reference direction \label{line:dR}
         \bequationn
           d_k^{R} \gets -\alpha_k g_k, \mgap \text{where} \mgap \alpha_k \gets \norm{g_k}^2/(g_k\T H_k g_k).
         \eequationn
         \State Compute any direction $\dbar_k$ that satisfies the inequalities \label{line:dbar}
         $$
         g_k\T \dbar_k \leq g_k\T d_k^R  \ \  \text{and} \ \  m_k(\dbar_k) \leq m_k(0).
        $$
        \State Set $[d_k]_{\Iscr_k} \gets \dbar_k$ and $[d_k]_i \gets 0$ for $i \notin \Iscr_k$. \label{line:d-phi}
        \State Use Algorithm~\ref{alg:ls-phi} to compute $x_{k+1} \gets$ {\sc linesearch\_$\phi$}$(x_k,d_k,\Iscr_k,\eta,\xi)$. \label{line:ls-phi}
     \Else \Comment{$k\in\Sscr_\beta$}
        \State Choose any $\Iscr_k\subseteq\{i:[\beta(x_k)]_i \neq 0\}$ such that $\norm{[\beta(x_k)]_{\Iscr_k}} \geq \eta_\beta \norm{[\beta(x_k)}$.   \label{line:Ik-beta}
        \State Set $[d_k]_{\Iscr_k} \gets -[\beta(x_k)]_{\Iscr_k}$ and $[d_k]_i \gets 0$ for $i \notin \Iscr_k$. \label{line:d-beta}
        \State Use Algorithm~\ref{alg:ls-beta} to compute $x_{k+1} \gets$ {\sc linesearch\_$\beta$}$(x_k,d_k,\eta,\xi)$. \label{line:ls-beta}
     \EndIf
  \EndFor
  \ealgorithmic
\ealgorithm

\balgorithm[ht]
  \caption{A line search procedure for computing $x_{k+1}$ when $k\in\Sscr_\phi$.}
  \label{alg:ls-phi}
  \balgorithmic[1]
  \Procedure{$x_{k+1} =$ linesearch\_$\phi$}{$x_k,d_k,\Iscr_k,\eta,\xi$}
     \State Set $j \gets 0$ and $y_0 \gets \Proj{x_k+d_k \, ; x_k}$.
     \While{$\text{sgn}(y_j) \neq \text{sgn}(x_k)$} \label{line:ls-phi-while}
          \If{$F(y_j) \leq F(x_k)$}
              \State \textbf{return} $x_{k+1} \gets y_j$. \label{line:ls-phi-return1} \Comment{$k\in\Sscr_\phi\s{ADD}$}
         \EndIf
         \State Set $j \gets j + 1$ and then $y_j \gets \Proj{x_k + \xi^j d_k \, ; x_k}$.
     \EndWhile
     \If{$j \neq 0$} \label{line:check-j}
        \State Set $\alpha_B \gets \text{argsup} \sgap \{ \alpha >  0: \text{sgn}(x_k + \alpha d_k) = \text{sgn}(x_k)\}$. \label{line:alphaB}
        \State Set $y_j \gets x_k + \alpha_B d_k$.  \label{line:ls-phi-y-alphaB}
	 \If{$F(y_j) \leq F(x_k) + \eta \alpha_B [\Grad F(x_k)]_{\Iscr_k}^T [d_k]_{\Iscr_k}$} \label{line:check-B}
              \State \textbf{return} $x_{k+1} \gets y_j$. \label{line:ls-phi-return2}\Comment{$k\in\Sscr_\phi\s{ADD}$}
          \EndIf
     \EndIf
     \Loop{} \label{line:ls-phi-loop}
         \If{$F(y_j) \leq F(x_k) + \eta \xi^j [\Grad F(x_k)]_{\Iscr_k}^T [d_k]_{\Iscr_k}$} \label{line:ls-phi-SD}
            \State \textbf{return} $x_{k+1} \gets y_j$. \label{line:ls-phi-return3} \Comment{$k\in\Sscr_\phi\s{SD}$}
         \EndIf
         \State Set $j \gets j + 1$ and then $y_j \gets x_k + \xi^j d_k$.
     \EndLoop
  \EndProcedure
  \ealgorithmic
\ealgorithm

\balgorithm[ht]
  \caption{A line search procedure for computing $x_{k+1}$ when $k\in\Sscr_\beta$.}
  \label{alg:ls-beta}
  \balgorithmic[1]
  \Procedure{$x_{k+1} =$ linesearch\_$\beta$}{$x_k,d_k,\eta,\xi$}
     \State Set $j \gets 0$ and $y_0 \gets x_k + d_k$.
     \While{$F\big(y_j) > F(x_k) - \eta \xi^{j} \norm{d_k}^2$} \label{line:while-beta}
        \State Set $j \gets j + 1$ and then $y_j \gets x_k + \xi^j d_k$.
        \EndWhile
          \State \textbf{return} $x_{k+1} \gets y_{j}$. \label{line:update2}
  \EndProcedure
  \ealgorithmic
\ealgorithm

\algacro\ computes a sequence of iterates $\{x_k\}$.  During each iteration, the sets $\Iscr^0(x_k)$, $\Iscr^+(x_k)$, and $\Iscr^-(x_k)$ are identified, which are used to define $\beta(x_k)$ and $\phi(x_k)$.  We can see in line~\ref{line:check-termination}  of Algorithm~\ref{alg:main.x} that when both $\norm{\beta(x_k)}$ and $\norm{\phi(x_k)}$ are less than a prescribed tolerance $\epsilon > 0$, it returns $x_k$ as an approximate solution to~\eqref{prob.x}; this is justified by Lemma~\ref{lem:criticality}.  Otherwise, it proceeds in one of two ways depending on the relative sizes of $\norm{\beta(x_k)}$ and $\norm{\phi(x_k)}$.  We describe these two cases next.

\begin{romannum}
\item The relationship $\norm{\beta(x_k)} \leq \gamma\norm{\phi(x_k)}$ indicates that significant progress toward optimality can still be achieved by reducing $F$ over the current set of nonzero variables at $x_k$; lines~\ref{line:Ik-phi}--\ref{line:ls-phi} are designed for this purpose. In line~\ref{line:Ik-phi}, a subset $\Iscr_k$ of variables are chosen such that the norm of $\phi(x_k)$ over that subset of variables is at least proportional to the norm of $\phi(x_k)$ over the full set of variables. This allows control over the size of the subproblem, which may be as small as one-dimensional.  Note that for $i\in\Iscr_k$, it must hold that $[\phi(x_k)]_i \neq 0$, which in turn means that $i \notin\Iscr^0(x_k)$, i.e., the $i$th variable is nonzero.  This means that the reduced space subproblem to minimize $m_k(d)$ over $d \in \Re^{|\Iscr_k|}$ is aimed at minimizing $F$ over the variables in~$\Iscr_k$. Our analysis does not require an exact minimizer of $m_k$.  Rather, we allow for the computation of any direction $\dbar_k$ that satisfies the conditions in line~\ref{line:dbar}, namely $g_k\T \dbar_k \leq g_k\T d_k^{R}$ and $m_k(\dbar_k) \leq m_k(0)$, where the reference direction $d_k^R$ is computed in line~\ref{line:dR} by minimizing $m_k$ along the steepest decent direction.  The first condition imposes how much descent is required by the search direction $\dbar_k$, while the second condition ensures that the model is reduced at least as much as a zero step.  It will be shown (see Lemma~\ref{lem:dbar-bound}) that the second condition ensures that $\dbar_k$ is bounded by a multiple of $\norm{g_k}$.  Such conditions are satisfied by a Newton step, by any Newton-CG iterate, and asymptotically by CD iterates. Once $\dbar_k$ is obtained, the search direction $d_k$ in the full space is obtained by filling its elements that correspond to the index set $\Iscr_k$ with the elements from $\dbar_k$, and setting the complementary set of variables to zero (see line~\ref{line:d-phi}).  With the search direction $d_k$ computed, we call Algorithm~\ref{alg:ls-phi} in line~\ref{line:ls-phi}, which performs a (non-standard) backtracking projected line search.  This line search procedure makes use of the projection operator $\proj(\cdot \, ;x_k)$.  This operator projects vectors onto the orthant inhabited by $x_k$, a feature shared by OBA. The while-loop that starts in line~\ref{line:ls-phi-while} of Algorithm~\ref{alg:ls-phi} checks whether the trial point $y_j$ decreases the objective function $F$ relative to its value at $x_k$ when $\text{sgn}(y_j) \neq \text{sgn}(x_k)$.  If the line search terminates in this while-loop, then this implies that at least one component of $x_k$ that was nonzero has become zero for $x_{k+1} = y_j$.  Since the dimension of the reduced space will therefore be reduced during the next iteration (provided line~\ref{line:main.if} of Algorithm~\ref{alg:main.x} tests true), the procedure only requires $F(x_{k+1}) \leq F(x_k)$ instead of a more traditional sufficient decrease condition, e.g., one based on the Armijo condition. If line~\ref{line:check-j} of Algorithm~\ref{alg:ls-phi} is reached, then the current trial iterate $y_j$ satisfies $\text{sgn}(y_j) = \text{sgn}(x_k)$, i.e., the trial iterate has entered the same orthant as that inhabited by $x_k$.  Once this has occurred, the method could then perform a standard backtracking Armijo line search as stipulated in the loop starting at line~\ref{line:ls-phi-loop}.  For the purpose of guaranteeing convergence, however, the method first checks whether the largest step along $d_k$ that stays in the same orthant as $x_k$ (see lines~\ref{line:alphaB} and~\ref{line:ls-phi-y-alphaB}) satisfies the Armijo sufficient decrease condition (see line~\ref{line:check-B}).  (This aspect makes our procedure different from a standard backtracking scheme.)  If Algorithm~\ref{alg:ls-phi} terminates in line~\ref{line:ls-phi-return1} or \ref{line:ls-phi-return2}, then at least one nonzero variable at $x_k$ will have become zero at $x_{k+1}$, which we indicate by saying $k\in\Sscr_\phi\s{ADD} \subseteq \Sscr_\phi$. Otherwise, if Algorithm~\ref{alg:ls-phi} terminates in line~\ref{line:ls-phi-return3}, then $x_{k+1}$ and $x_k$ are housed in the same orthant and sufficient decrease in $F$ was achieved (i.e., the Armijo condition in line~\ref{line:ls-phi-SD} was satisfied). Since sufficient decrease has been achieved in this case, we say that $k\in\Sscr_\phi\s{SD} \subseteq \Sscr_\phi$.
\item When $\norm{\beta(x_k)} > \gamma\norm{\phi(x_k)}$, progress toward optimality is best achieved by freeing at least one variable that is currently set to zero; lines~\ref{line:Ik-beta}--\ref{line:ls-beta} are designed for this purpose. Since $\norm{\beta(x_k)}$ is relatively large, in line~\ref{line:Ik-beta} of Algorithm~\ref{alg:main.x} a subset $\Iscr_k$ of variables is chosen such that the norm of $\beta(x_k)$ over that subset of variables is at least proportional to the norm of $\beta(x_k)$ over the full set of variables.  Similar to the previous case, this allows control over the size of the subproblem, which in the extreme case may be one-dimensional.  If $i\in\Iscr_k$, then $[\beta(x_k)]_i \neq 0$, which in turn means that $i \in\Iscr^0(x_k)$, i.e., the $i$th variable has the value zero. The components of $\beta(x_k)$ that correspond to $\Iscr_k$ are then used to define the search direction $d_k$ in line~\ref{line:d-beta}.  With the search direction $d_k$ computed, Algorithm~\ref{alg:ls-beta} is called in line~\ref{line:ls-beta}, which performs a standard backtracking Armijo line search to obtain $x_{k+1}$.  If a unit step length is taken, i.e., if $x_{k+1}  = x_k + d_k$, then $x_{k+1}$ can be interpreted as the iterate that would be obtained by taking a \emph{reduced ISTA step} in the space of variables indexed by $\Iscr_k$. (For additional details, see Lemma~\ref{lem:equi-ista} in the appendix.)
\end{romannum}

\section{Convergence Analysis} \label{sec:convergence}

Our analysis uses the following assumption that is assumed to hold throughout this section.

\begin{assumption}\label{ass:main}
The function $f : \R{n} \to \R{}$ is convex, twice continuously differentiable, and bounded below on the level set $\Lscr := \{x\in\Re^n : F(x) \leq F(x_0)\}$.  The gradient function $\nabla f : \R{n} \to \R{n}$ is Lipschitz continuous on $\Lscr$ with Lipschitz constant~$L$.  The Hessian function $\nabla^2 f : \R{n} \to \R{n\times n}$ is uniformly positive definite and bounded on $\Lscr$, i.e., there exist positive constants $\theta_{\min}$ and $\theta_{\max}$ such that
\bequationn
  \theta_{\min} \|v\|^2 \leq v^TH(x)v \leq \theta_{\max} \|v\|^2 \mgap \text{for all} \mgap \{x,v\} \subset \Re^n.
\eequationn
\end{assumption}



Our analysis uses the index sets (already shown in Algorithms~\ref{alg:main.x}--\ref{alg:ls-phi})
\begin{equation*} 
\begin{aligned}
\Sscr_\phi &:= \{k: \text{lines~\ref{line:Ik-phi}--\ref{line:ls-phi} in Algorithm~\ref{alg:main.x} are performed during iteration $k$} \}, \\
\Sscr_\phi\s{ADD} &:= \{k\in\Sscr_\phi: \text{sgn}(x_{k+1}) \neq \text{sgn}(x_k)\}, \\
\Sscr_\phi\s{SD} &:= \{k\in\Sscr_\phi: \text{sgn}(x_{k+1}) = \text{sgn}(x_k)\}, \ \ \text{and} \\
\Sscr_\beta &:= \{k: \text{lines~\ref{line:Ik-beta}--\ref{line:ls-beta} in Algorithm~\ref{alg:main.x} are performed during iteration $k$} \}.
\end{aligned}
\end{equation*}

We start with a lemma that establishes an important identity for iterations in $\Sscr_\beta$.

\begin{lemma} \label{lem:facts}
If $k\in\Sscr_\beta$, then $(\Iscr_k,d_k)$ in lines~\ref{line:Ik-beta} and \ref{line:d-beta} of Algorithm~\ref{alg:main.x} yield
\begin{equation} \label{eq:identity}
[d_k]_{\Iscr_k} = -[\Grad f(x_k) + \lambda \cdot {\rm sgn}(x_k + \xi^j d_k)]_{\Iscr_k} \mgap \text{for any integer $j$.}
\end{equation}
Consequently, the right-hand side of \eqref{eq:identity} has the same value for any integer $j$.
\end{lemma}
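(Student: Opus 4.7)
The plan is to reduce the claim to a componentwise case analysis indexed by $i \in \Iscr_k$, using the structural fact that membership of $i$ in $\Iscr_k$ forces $[x_k]_i = 0$ together with a very specific sign pattern for $[d_k]_i$.

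First, I would observe that because $k \in \Sscr_\beta$, line~\ref{line:Ik-beta} of Algorithm~\ref{alg:main.x} chooses $\Iscr_k \subseteq \{i : [\beta(x_k)]_i \neq 0\}$. Inspecting the definition of $\beta$, the only way $[\beta(x_k)]_i$ can be nonzero is if $i \in \Iscr^0(x_k)$, i.e.\ $[x_k]_i = 0$. Hence for every $i \in \Iscr_k$, either (a) $[\Grad f(x_k)]_i + \lambda < 0$ and $[\beta(x_k)]_i = [\Grad f(x_k)]_i + \lambda$, or (b) $[\Grad f(x_k)]_i - \lambda > 0$ and $[\beta(x_k)]_i = [\Grad f(x_k)]_i - \lambda$. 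Combined with line~\ref{line:d-beta}, which sets $[d_k]_{\Iscr_k} = -[\beta(x_k)]_{\Iscr_k}$, this yields $[d_k]_i > 0$ in case (a) and $[d_k]_i < 0$ in case (b).

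Next, since $[x_k]_i = 0$ for every $i \in \Iscr_k$, one has $[x_k + \xi^j d_k]_i = \xi^j [d_k]_i$, and because $\xi \in (0,1]$ gives $\xi^j > 0$ for every integer $j$, the sign satisfies $\mathrm{sgn}([x_k + \xi^j d_k]_i) = \mathrm{sgn}([d_k]_i)$ regardless of $j$. Substituting this into the right-hand side of \eqref{eq:identity}, I would verify both cases: in case (a), $\mathrm{sgn}([d_k]_i) = +1$, so the $i$th component of the right-hand side is $-[\Grad f(x_k)]_i - \lambda = -[\beta(x_k)]_i = [d_k]_i$; in case (b), $\mathrm{sgn}([d_k]_i) = -1$, so the $i$th component is $-[\Grad f(x_k)]_i + \lambda = -[\beta(x_k)]_i = [d_k]_i$. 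Either way, \eqref{eq:identity} holds componentwise on $\Iscr_k$. The stated consequence that the right-hand side is independent of $j$ is then immediate, since $[d_k]_{\Iscr_k}$ itself does not depend on $j$.

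There is no real obstacle here: the proof is a short case analysis, and the only subtlety is being careful to distinguish the role of the hypothesis $[\beta(x_k)]_i \neq 0$ (which determines whether $[d_k]_i$ is positive or negative) from the role of $\xi^j > 0$ (which guarantees that the sign of $[x_k + \xi^j d_k]_i$ matches that of $[d_k]_i$ uniformly in $j$). The whole argument fits in a few lines once these observations are arranged in the correct order.
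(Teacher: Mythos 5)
Your proof is correct and follows essentially the same route as the paper's: a componentwise case analysis on $i\in\Iscr_k$ exploiting $[x_k]_i = 0$, the two branches of the definition of $\beta$, and $\xi^j > 0$. The only cosmetic difference is that you case-split on which branch of $\beta(x_k)$ is active and deduce the sign of $[x_k+\xi^j d_k]_i$, whereas the paper splits on the sign of $[x_k+\xi^j d_k]_i$ and deduces the branch --- the two directions are logically equivalent.
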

\begin{proof}
We prove that \eqref{eq:identity} holds for an arbitrary element of $\Iscr_k$.  To this end,
let $j$ be any integer and $i\in \Iscr_k\subseteq\{\ell:[\beta(x_k)]_\ell \neq 0]\}$, where $\Iscr_k$ is defined in line~\ref{line:Ik-beta}.  It follows from the definition of $\Iscr_k$, the definition of $d_k$ in line~\ref{line:d-beta}, and $i\in\Iscr_k$ that
\begin{equation} \label{eq:d-beta}
[d_k]_i=
\begin{cases}
-\big([\nabla f(x_k)]_i +\lambda\big) &   \text{if $[\Grad f(x_k)]_i + \lambda < 0$,} \\
-\big([\nabla f(x_k)]_i -\lambda\big) &    \text{if $[\Grad f(x_k)]_i - \lambda > 0$,}
\end{cases}
\end{equation}
so that $[d_k]_i \neq 0$. Also, since $[x_k]_i = 0$ for $i\in\Iscr_k$, we know that $[x_k + \xi^j d_k]_i \neq 0$.  Thus, we need only consider the following two cases. \\
\noindent\textbf{Case 1:} Suppose $[x_k + \xi^j d_k]_i>0$.  In this case, the right-hand-side of~\eqref{eq:identity} is equal to $-([\Grad f(x_k)]_i + \lambda)$.  As for the left-hand-side, since $[x_k]_i = 0$ and $[x_k + \xi^j d_k]_i>0$, we have $ 0 < [x_k + \xi^j d_k]_i = \xi^j[d_k]_i$, which combined with $\xi^j > 0$ means that $[d_k]_i > 0$.  This fact and~\eqref{eq:d-beta} gives $[d_k]_i = -([\Grad f(x_k)]_i + \lambda)$, so~\eqref{eq:identity} holds. \\
\noindent\textbf{Case 2:} Suppose $[x_k + \xi^j d_k]_i<0$.
In this case, the right-hand-side of~\eqref{eq:identity} is equal to
$-([\Grad f(x_k)]_i - \lambda)$.  As for the left-hand-side, since $[x_k]_i = 0$ and $[x_k + \xi^j d_k]_i < 0$ we have $ 0 > [x_k + \xi^j d_k]_i = \xi^j[d_k]_i$, which when combined with $\xi^j > 0$ means that $[d_k]_i < 0$. This fact and~\eqref{eq:d-beta} gives $[d_k]_i = -([\Grad f(x_k)]_i - \lambda)$, so~\eqref{eq:identity} holds.
\end{proof}

We can now establish a bound for a decrease in the objective when $k\in\Sscr_\beta$.

\begin{lemma} \label{lem:beta-decrease}
If $k\in\Sscr_\beta$, then $d_k$ in line~\ref{line:d-beta} of Algorithm~\ref{alg:main.x} yields
$$
F(x_k + \xi^j d_k) \leq F(x_k) - \frac{\xi^j}{2} \norm{d_k}^2
\ \ \text{for any integer $j$ with $0\leq \xi^j \leq \frac{1}{L}$}.
$$
\end{lemma}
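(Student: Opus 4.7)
The plan is to apply the standard quadratic upper bound for $f$, compute the change in $\lambda\onenorm{\cdot}$ directly from the support properties of $d_k$, and then use the identity from Lemma~\ref{lem:facts} to cancel all $\lambda$-dependent terms before invoking the step-size restriction $\xi^j \le 1/L$.

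First, since $\Grad f$ is $L$-Lipschitz on the level set $\Lscr$ (Assumption~\ref{ass:main}), the standard descent lemma yields
\bequationn
  f(x_k + \xi^j d_k) \le f(x_k) + \xi^j \Grad f(x_k)^T d_k + \tfrac{L}{2}(\xi^j)^2 \norm{d_k}^2.
\eequationn
Second, because $\Iscr_k \subseteq \{\ell:[\beta(x_k)]_\ell \ne 0\} \subseteq \Iscr^0(x_k)$, every index $i\in\Iscr_k$ satisfies $[x_k]_i = 0$, and by construction $[d_k]_i = 0$ for $i\notin\Iscr_k$; hence
\bequationn
  \lambda\onenorm{x_k + \xi^j d_k} - \lambda\onenorm{x_k} = \lambda\xi^j \onenorm{[d_k]_{\Iscr_k}}.
\eequationn

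Third, I would invoke Lemma~\ref{lem:facts} at the same integer $j$: for each $i \in \Iscr_k$ it reads $[\Grad f(x_k)]_i = -[d_k]_i - \lambda\,\text{sgn}([x_k+\xi^jd_k]_i)$. Since $[x_k]_i = 0$ and $\xi^j > 0$, the sign of $[x_k+\xi^jd_k]_i = \xi^j[d_k]_i$ coincides with the sign of $[d_k]_i$, so $\text{sgn}([x_k+\xi^jd_k]_i)\,[d_k]_i = |[d_k]_i|$. Multiplying the identity by $[d_k]_i$ and summing over $i\in\Iscr_k$ (noting $[d_k]_i = 0$ off $\Iscr_k$) produces
\bequationn
  \Grad f(x_k)^T d_k = -\norm{d_k}^2 - \lambda\onenorm{[d_k]_{\Iscr_k}}.
\eequationn
Substituting this into the first two displays, the $\lambda\onenorm{[d_k]_{\Iscr_k}}$ contributions cancel exactly, leaving
\bequationn
  F(x_k + \xi^j d_k) - F(x_k) \le -\xi^j\norm{d_k}^2 + \tfrac{L}{2}(\xi^j)^2\norm{d_k}^2 = \xi^j\norm{d_k}^2\bigl(\tfrac{L}{2}\xi^j - 1\bigr).
\eequationn
The restriction $\xi^j \le 1/L$ then forces the parenthesized factor to be at most $-\tfrac{1}{2}$, yielding the claimed bound.

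The main technicality I anticipate is justifying application of the descent lemma at $x_k + \xi^j d_k$, since Assumption~\ref{ass:main} provides Lipschitz continuity of $\Grad f$ only on $\Lscr$, and a priori the point $x_k + \xi^j d_k$ need not lie there. I would handle this either by an inductive argument that the line search keeps iterates inside $\Lscr$, or, more directly, by observing that the uniform Hessian bound $\theta_{\max}$ provided by the same assumption supplies a quadratic upper bound for $f$ along any segment in $\Lscr$ with constant no larger than $L$. Modulo this standard technical point, the proof reduces to the algebraic cancellation enabled by Lemma~\ref{lem:facts}.
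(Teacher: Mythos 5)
Your proof is correct, and its core cancellation step takes a genuinely different route from the paper's. Both arguments open identically, applying the descent lemma under $\xi^j \le 1/L$ to get $f(x_k+\xi^j d_k) \le f(x_k) + \xi^j \Grad f(x_k)^T d_k + \frac{\xi^j}{2}\norm{d_k}^2$, and both lean on Lemma~\ref{lem:facts}; they diverge in how the regularizer is handled. You exploit the fact that $d_k$ is supported on $\Iscr_k \subseteq \Iscr^0(x_k)$ to compute the change in the $\ell_1$ term exactly, $\lambda\onenorm{x_k+\xi^j d_k}-\lambda\onenorm{x_k} = \lambda\xi^j\onenorm{[d_k]_{\Iscr_k}}$, and to evaluate $\Grad f(x_k)^T d_k = -\norm{d_k}^2 - \lambda\onenorm{[d_k]_{\Iscr_k}}$ from the identity \eqref{eq:identity} (your sign manipulation is sound since $[d_k]_i \neq 0$ for $i\in\Iscr_k$ and $\xi^j>0$), so the $\lambda$-terms cancel identically. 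The paper instead invokes convexity of $f$ and of $\lambda\onenorm{\cdot}$ (via $\mathrm{sgn}(y_j)\in\partial\onenorm{y_j}$) to establish the stronger intermediate inequality \eqref{eq:beta.2}, valid against an arbitrary comparison point $z\in\Re^n$, and then specializes to $z=x_k$. Your computation buys simplicity and transparency --- it requires no convexity of $f$ at all, only the Lipschitz bound --- while the paper's version buys a proximal-gradient-style inequality of independent interest, of which, however, only the $z=x_k$ instance is ever used. Regarding your closing caveat: the paper's own proof applies the Lipschitz bound at $x_k+\xi^j d_k$ with exactly the same lack of ceremony, and the displayed inequality in Assumption~\ref{ass:main} is in fact stated for all $\{x,v\}\subset\Re^n$ (despite the surrounding prose saying \emph{on} $\Lscr$), so a quadratic upper bound holds globally with constant $\theta_{\max}$ playing the role of $L$; the wrinkle you flag is thus shared with, and implicitly dispatched by, the paper, and does not constitute a gap in your argument relative to theirs.
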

\begin{proof}
Let $j$ be any integer with $0\leq \xi^j \leq \frac{1}{L}$ and let $y_j := x_k + \xi^j d_k$.  By Lipschitz continuity of the gradient function $\Grad f$, we have
\begin{align}
  f(y_j)
    &\leq f(x_k) + \xi^j \Grad f(x_k)\T d_k+\frac{L}{2}\|\xi^j d_k\|^2 \nonumber \\
    &\leq f(x_k) + \xi^j \Grad f(x_k)\T d_k + \frac{\xi^j}{2} \|d_k\|^2. \label{eq:beta.1}
\end{align}
It then follows from~\eqref{eq:beta.1}, convexity of both $f$ and $\lambda\onenorm{\cdot}$, the fact that $\text{sgn}(y_j) \in \partial \onenorm{y_j}$, the definition of $d_k$ (in particular that $[d_k]_i = 0$ for $i\notin\Iscr_k$), and Lemma~\ref{lem:facts} that the following holds for all $z\in\Re^n$:
\begin{align}
      &\ F(y_j) \nonumber \\
     =&\ f(y_j)+ \lambda \onenorm{y_j}\nonumber \\
  \leq&\ f(x_k)+\xi^j \Grad f(x_k)\T d_k + \frac{\xi^j}{2} \|d_k\|^2 + \lambda \onenorm{y_j} \nonumber \\
  \leq&\ f(z)+\Grad f(x_k)\T(x_k-z)+\xi^j\Grad f(x_k)\T d_k+ \frac{\xi^j}{2} \|d_k\|^2+\lambda\onenorm{z}+\lambda\cdot{\rm sgn}(y_j)\T(y_j-z)\nonumber \\
  \leq&\ F(z)+[\nabla f(x_k)+\lambda\cdot {\rm sgn}(y_j)]^T(x_k-z) + \xi^j[\nabla f(x_k)+\lambda \cdot{\rm sgn}(y_j)]^T d_k + \frac{\xi^j}{2} \|d_k\|^2 \nonumber \\
     =&\ F(z)+[\nabla f(x_k)+\lambda\cdot {\rm sgn}(y_j)]^T(x_k-z)-\xi^j\| d_k\|^2 + \frac{\xi^j}{2} \| d_k\|^2\nonumber \\
     =&\ F(z)+[\nabla f(x_k)+\lambda\cdot{\rm sgn}(y_j)]^T(x_k-z) - \frac{\xi^j}{2} \| d_k\|^2. \label{eq:beta.2}
\end{align}
The desired result follows by considering $z = x_k$ in~\eqref{eq:beta.2}.
\end{proof}

We now show that Algorithm~\ref{alg:ls-beta} called in line~\ref{line:ls-beta} of Algorithm~\ref{alg:main.x} is well defined, and that it returns $x_{k+1}$ yielding sufficient decrease in the objective function.

\begin{lemma} \label{lem:suf-dec-beta}
If $k\in\Sscr_\beta$,  then $x_{k+1}$ satisfies
\begin{equation} \label{F-dec-beta}
F(x_{k+1})
\leq F(x_k) - \kappa_\beta \max\{\norm{\beta(x_k)}^2,\gamma^2\norm{\phi(x_k)}^2\},
\end{equation}
where $\kappa_\beta :=  \eta\eta_\beta \min\{1,\xi/L\}$.
\end{lemma}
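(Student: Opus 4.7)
The plan is to derive the sufficient-decrease bound in four short steps: establish a lower bound on the backtracking step size at termination of Algorithm~\ref{alg:ls-beta}, apply the line-search acceptance inequality, relate $\norm{d_k}^2$ to $\norm{\beta(x_k)}^2$ through the choice of $\Iscr_k$, and finally collapse the $\max$ using the fact that $k \in \Sscr_\beta$.

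First, I would argue that Algorithm~\ref{alg:ls-beta} terminates with a step $\xi^{j^*}\ge \min\{1,\xi/L\}$. Because $\eta \le 1/2$, Lemma~\ref{lem:beta-decrease} implies $F(x_k + \xi^j d_k) \le F(x_k) - \eta\xi^j\norm{d_k}^2$ whenever $\xi^j \le 1/L$, so the while-loop condition in line~\ref{line:while-beta} must fail for every such $j$. Let $\hat{j}$ be the smallest nonnegative integer with $\xi^{\hat{j}} \le 1/L$. If $\hat{j}=0$ then $\xi^{\hat{j}}=1$; otherwise $\xi^{\hat{j}-1} > 1/L$, and multiplying by $\xi$ gives $\xi^{\hat{j}} > \xi/L$. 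In either case $\xi^{\hat{j}}\ge \min\{1,\xi/L\}$. Since the terminating index $j^*$ satisfies $j^*\le \hat{j}$ and $\xi \in (0,1)$, I get $\xi^{j^*}\ge \min\{1,\xi/L\}$.

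Second, the exit condition of the while-loop yields
$$
F(x_{k+1}) \le F(x_k) - \eta \xi^{j^*} \norm{d_k}^2 \le F(x_k) - \eta\min\{1,\xi/L\}\norm{d_k}^2.
$$
Next, line~\ref{line:d-beta} places $[d_k]_{\Iscr_k}=-[\beta(x_k)]_{\Iscr_k}$ with the complementary entries zero, so $\norm{d_k}^2=\norm{[\beta(x_k)]_{\Iscr_k}}^2$, and the selection criterion on $\Iscr_k$ in line~\ref{line:Ik-beta} yields $\norm{d_k}^2 \ge \eta_\beta^2 \norm{\beta(x_k)}^2$ (and in particular, using $\eta_\beta \in (0,1]$, at least $\kappa_\beta \norm{\beta(x_k)}^2$ times the remaining factor). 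Finally, the branching test for $k\in\Sscr_\beta$ gives $\norm{\beta(x_k)}>\gamma\norm{\phi(x_k)}$, so $\norm{\beta(x_k)}^2 = \max\{\norm{\beta(x_k)}^2,\gamma^2\norm{\phi(x_k)}^2\}$, which converts the lower bound on the decrease into the stated form involving the max.

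The main obstacle is pinning down the lower bound $\xi^{j^*}\ge \min\{1,\xi/L\}$: the two cases $L\le 1$ (in which the unit step is accepted immediately) and $L>1$ (in which at least one backtrack is forced and one must invoke $\xi^{\hat{j}-1}>1/L$) have to be handled together to extract the $\min$. Once this is in hand, the remaining pieces follow directly from Lemma~\ref{lem:beta-decrease}, the algorithmic definitions of $\Iscr_k$ and $d_k$, and the $\Sscr_\beta$ branching rule.
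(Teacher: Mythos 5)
Your proposal follows the paper's proof essentially step for step: both arguments use Lemma~\ref{lem:beta-decrease} together with $\eta\in(0,1/2]$ to force failure of the while-loop test in line~\ref{line:while-beta} whenever $\xi^j\le 1/L$, deduce the terminal step-size bound $\xi^{j_*}\ge\min\{1,\xi/L\}$, invoke the loop's exit condition to obtain $F(x_{k+1})\le F(x_k)-\eta\min\{1,\xi/L\}\norm{d_k}^2$, pass from $\norm{d_k}$ to $\norm{\beta(x_k)}$ via lines~\ref{line:d-beta} and~\ref{line:Ik-beta}, and collapse the $\max$ using the failure of the test in line~\ref{line:main.if} for $k\in\Sscr_\beta$. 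Your two-case analysis of the smallest $\hat{j}$ with $\xi^{\hat{j}}\le 1/L$ is in fact more explicit than the paper's, which asserts the step-size bound without detail.

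One step, however, contains a genuine error of direction. From the selection condition $\norm{[\beta(x_k)]_{\Iscr_k}}\ge\eta_\beta\norm{\beta(x_k)}$ you correctly derive $\norm{d_k}^2\ge\eta_\beta^2\norm{\beta(x_k)}^2$, but your parenthetical claim that this yields the stated constant \Ql using $\eta_\beta\in(0,1]$\Qr{} is inverted: since $\eta_\beta^2\le\eta_\beta$, what you have established is a decrease with constant $\eta\,\eta_\beta^2\min\{1,\xi/L\}\le\kappa_\beta$, and a smaller decrease constant does not imply \eqref{F-dec-beta} as stated unless $\eta_\beta=1$. You should know that the paper's own proof makes the mirror-image leap, asserting $\norm{[\beta(x_k)]_{\Iscr_k}}^2\ge\eta_\beta\norm{\beta(x_k)}^2$ without squaring the factor, so the discrepancy originates in the paper: comparing with the analogous constant $\kappa_\phi$ in Lemma~\ref{lem:dec-phi-SD}, which carries the squared factor $\eta_\phi^2$, the constant here should read $\kappa_\beta=\eta\,\eta_\beta^2\min\{1,\xi/L\}$ (or else the condition in line~\ref{line:Ik-beta} should be imposed on squared norms). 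The correction is harmless downstream, since Lemma~\ref{lem:Sbeta-finite} and the subsequent theorems use only $\kappa_\beta>0$; the right fix in your write-up is therefore to keep your $\eta_\beta^2$ bound and adjust the stated constant, not to force the inequality in the wrong direction.
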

\begin{proof}
Let $j$ be any integer with $0\leq \xi^j \leq \frac{1}{L}$ and let $y_j := x_k + \xi^j d_k$.  It follows from Lemma~\ref{lem:beta-decrease} and the fact that $\eta\in(0,1/2]$ in Algorithm~\ref{alg:main.x} that
$$
F\big(y_{j} \big) \leq F(x_k) - \frac{\xi^{j}}{2} \norm{d_k}^2
\leq F(x_k) - \eta \xi^{j} \norm{d_k}^2,
$$
It follows from this inequality that Algorithm~\ref{alg:ls-beta} will return the vector $x_{k+1} = x_k + \xi^{j_*} d_k$ with $\xi^{j_*} \geq \min\{1,\xi/L\}$ when called in line~\ref{line:ls-beta} of Algorithm~\ref{alg:main.x}. Using this bound, line~\ref{line:while-beta} of Algorithm~\ref{alg:ls-beta}, and lines~\ref{line:d-beta} and~\ref{line:Ik-beta} of Algorithm~\ref{alg:main.x}, we have
\begin{align*}
F(x_{k+1}) &\leq F(x_k) - \eta \xi^{j_*} \norm{d_k}^2 \leq F(x_k) - \eta\min\{1,\xi/L\} \norm{d_k}^2 \\
&=  F(x_k) - \eta \min\{1,\xi/L\} \norm{[\beta(x_k)]_{\Iscr_k}}^2 \leq F(x_k) - \eta \eta_\beta\min\{1,\xi/L\} \norm{\beta(x_k)}^2.
\end{align*}
The inequality~\eqref{F-dec-beta} follows from the definition of $\kappa_\beta$, the previous inequality, and the fact that the inequality in line~\ref{line:main.if} of Algorithm~\ref{alg:main.x} must not hold since line~\ref{line:ls-beta} is assumed to be reached.
\end{proof}

We now show that the index set $\Sscr_\beta$ must be finite.

\begin{lemma} \label{lem:Sbeta-finite}
The index set $\Sscr_\beta$ must be finite, i.e., $|\Sscr_\beta| < \infty$.
\end{lemma}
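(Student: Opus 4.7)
The plan is to argue by contradiction: assume $|\Sscr_\beta|=\infty$, and combine Lemma~\ref{lem:suf-dec-beta} with the boundedness of $F$ below on $\Lscr$ (Assumption~\ref{ass:main}) to reach a contradiction via a standard sufficient-decrease telescoping argument.

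The preliminary step is to confirm that $\{F(x_k)\}$ is monotonically non-increasing. For $k\in\Sscr_\beta$, this is immediate from the while-loop condition in line~\ref{line:while-beta} of Algorithm~\ref{alg:ls-beta}. For $k\in\Sscr_\phi$, I would check each of the three possible return paths of Algorithm~\ref{alg:ls-phi} (lines~\ref{line:ls-phi-return1}, \ref{line:ls-phi-return2}, \ref{line:ls-phi-return3}): the first directly asserts $F(y_j)\leq F(x_k)$, while the other two give one-sided Armijo-type inequalities that reduce to $F(x_{k+1})\leq F(x_k)$ after noting that $[\Grad F(x_k)]_{\Iscr_k}^T[d_k]_{\Iscr_k} = g_k^T\dbar_k \leq g_k^T d_k^{R} = -\alpha_k \|g_k\|^2 \leq 0$ by the choice of $\dbar_k$ in line~\ref{line:dbar} of Algorithm~\ref{alg:main.x} (recall $\alpha_k>0$ by Assumption~\ref{ass:main}).

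Once monotonicity is in hand, telescoping over $k\in\Sscr_\beta$ gives
$$
\sum_{k\in\Sscr_\beta}\big(F(x_k)-F(x_{k+1})\big) \leq F(x_0) - \lim_{k\to\infty} F(x_k) < \infty,
$$
and combining this with Lemma~\ref{lem:suf-dec-beta} yields $\sum_{k\in\Sscr_\beta}\|\beta(x_k)\|^2<\infty$. In particular, if $\Sscr_\beta$ were infinite, then $\|\beta(x_k)\|\to 0$ along $k\in\Sscr_\beta$.

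It remains to produce a uniform positive lower bound on $\|\beta(x_k)\|$ for $k\in\Sscr_\beta$, which gives the sought contradiction. For any such $k$, the termination test in line~\ref{line:check-termination} of Algorithm~\ref{alg:main.x} failed, so $\max\{\|\beta(x_k)\|,\|\phi(x_k)\|\}>\epsilon$; simultaneously, the if-condition in line~\ref{line:main.if} failed, so $\|\beta(x_k)\|>\gamma\|\phi(x_k)\|$. A short case analysis on which of $\|\beta(x_k)\|,\|\phi(x_k)\|$ realizes the max then gives $\|\beta(x_k)\|>\min\{1,\gamma\}\,\epsilon>0$ in either case, which contradicts $\|\beta(x_k)\|\to 0$. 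The main subtle point in this plan is the monotonicity verification for the $\Sscr_\phi$ branch, because Algorithm~\ref{alg:ls-phi} has three distinct return paths and uses the projection $\Proj{\cdot\,;x_k}$; after that, everything else is routine.
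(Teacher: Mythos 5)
Your proposal is correct and follows essentially the same route as the paper: contradiction, the per-iteration decrease from Lemma~\ref{lem:suf-dec-beta}, monotonicity of $\{F(x_k)\}$ on the $\Sscr_\phi$ iterations, and the uniform lower bound $\|\beta(x_k)\| > \min\{1,\gamma\}\,\epsilon$ extracted from the failed tests in lines~\ref{line:check-termination} and~\ref{line:main.if}. The only cosmetic difference is that you conclude via summability of $\|\beta(x_k)\|^2$ forcing $\|\beta(x_k)\|\to 0$ along $\Sscr_\beta$, whereas the paper telescopes directly to $F(x_{\ell+1})\to-\infty$; these are the same argument, and your explicit verification of monotonicity across the three return paths of Algorithm~\ref{alg:ls-phi} is a point the paper only asserts in passing.
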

\begin{proof}
To derive a contradiction, suppose that $|\Sscr_\beta| = \infty$, which also means that Algorithm~\ref{alg:main.x} does not terminate finitely. Since  Algorithm~\ref{alg:main.x} does not terminate finitely, we know from line~\ref{line:check-termination} of Algorithm~\ref{alg:main.x} that $\max\{ \norm{\beta(x_k)},  \norm{\phi(x_k)} \} > \epsilon$ for all $k\geq 0$.  Combining this inequality with Lemma~\ref{lem:suf-dec-beta} and the fact that $F(x_{k+1}) \leq F(x_k)$ for all $k\notin \Sscr_\beta$ (as a result of Algorithm~\ref{alg:ls-phi} called in line~\ref{line:ls-phi} of Algorithm~\ref{alg:main.x}), we may conclude for any nonnegative integer $\ell$ and $\kappa_\beta > 0$ defined in Lemma~\ref{lem:suf-dec-beta} that
\begin{align*}
F(x_0) - F(x_{\ell+1})
&= \sum_{k=0}^\ell \big[F(x_k) - F(x_{k+1})\big] \\
&\geq  \sum_{\substack{k\in\Sscr_\beta,k\leq\ell}} \big[F(x_k) - F(x_{k+1})\big]  \\
&\geq  \sum_{\substack{k\in\Sscr_\beta,k\leq\ell}} \kappa_\beta \max\{\norm{\beta(x_k)}^2,\gamma^2\norm{\phi(x_k)}^2\} \\
&\geq  \sum_{\substack{k\in\Sscr_\beta,k\leq\ell}} \kappa_\beta \min\{1,\gamma^2\} \epsilon^2.
\end{align*}
Rearranging the previous inequality shows that
\begin{align*}
\lim_{l\to\infty} F(x_{\ell+1})
&\leq \lim_{\ell \to\infty} \left[F(x_0) - \sum_{\substack{k\in\Sscr_\beta,k\leq\ell}} \kappa_\beta\min\{1,\gamma^2\}\epsilon^2\right] \\
&= F(x_0) -  \sum_{\substack{k\in\Sscr_\beta}} \kappa_\beta\min\{1,\gamma^2\} \epsilon^2
= -\infty,
\end{align*}
which contradicts Assumption~\ref{ass:main}.  Thus, we conclude that $|\Sscr_\beta| < \infty$.
\end{proof}

To prove that Algorithm~\ref{alg:main.x} terminates finitely with an approximate solution to problem~\eqref{prob.x}, all that remains is to prove that the set $\Sscr_\phi$ is finite.  To establish that $\Sscr_\phi \equiv \Sscr_\phi\s{ADD} \cup \Sscr_\phi\s{SD}$ is finite, we proceed by showing individually that both $\Sscr_\phi\s{ADD}$ and $\Sscr_\phi\s{SD}$ are finite.  We begin with the set $\Sscr_\phi\s{ADD}$.

\begin{lemma} \label{lem:SphiAdd-finite}
The set $\Sscr_\phi\s{ADD}$ is finite, i.e., $|\Sscr_\phi\s{ADD}| < \infty$.
\end{lemma}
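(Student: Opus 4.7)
The plan is a combinatorial counting argument based on the cardinality of the support of the iterates. Define $s_k := |\Iscr^+(x_k) \cup \Iscr^-(x_k)| \in \{0,1,\ldots,n\}$. I would establish the following support-evolution properties by inspecting the three algorithms.

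For any $k \in \Sscr_\phi$, the search direction satisfies $[d_k]_i = 0$ for every $i \in \Iscr^0(x_k)$, because line~\ref{line:Ik-phi} forces $\Iscr_k \subseteq \{i:[\phi(x_k)]_i \neq 0\}$ and, from the definition of $\phi$, $[\phi(x_k)]_i = 0$ whenever $i \in \Iscr^0(x_k)$. Tracing this through Algorithm~\ref{alg:ls-phi}, every trial iterate preserves the zero components of $x_k$, so $\Iscr^0(x_k) \subseteq \Iscr^0(x_{k+1})$ and hence $s_{k+1} \leq s_k$. If in addition $k \in \Sscr_\phi\s{ADD}$, I claim the stronger inequality $s_{k+1} \leq s_k - 1$. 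There are two possible exit points. If the return is triggered at line~\ref{line:ls-phi-return1}, then $x_{k+1} = \Proj{x_k + \xi^j d_k\, ; x_k}$; the projection maps positive components of $x_k$ to the nonnegative reals and negative components to the nonpositive reals, so the only way $\text{sgn}(x_{k+1}) \neq \text{sgn}(x_k)$ is for some index of $\Iscr^+(x_k) \cup \Iscr^-(x_k)$ to have been sent to zero. If the return is triggered at line~\ref{line:ls-phi-return2}, then $x_{k+1} = x_k + \alpha_B d_k$; by the definition of $\alpha_B$ as the supremum of sign-preserving stepsizes, at least one originally-nonzero coordinate of $x_{k+1}$ must equal zero.

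For $k \in \Sscr_\beta$, the definition of $\beta$ combined with line~\ref{line:d-beta} yields $\Iscr_k \subseteq \Iscr^0(x_k)$, so $[d_k]_i = 0$ for every $i \in \Iscr^+(x_k) \cup \Iscr^-(x_k)$. Consequently $x_{k+1} = x_k + \xi^{j_*} d_k$ preserves every nonzero component of $x_k$ in both value and sign, and the support can grow only by indices from $\Iscr_k$, giving the per-iteration bound $s_{k+1} - s_k \leq |\Iscr_k| \leq n$.

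With these bounds in place, the result follows from a budget argument. The total cumulative increase $\sum_k \max\{0, s_{k+1}-s_k\}$ is at most $n\,|\Sscr_\beta|$, which is finite by Lemma~\ref{lem:Sbeta-finite}. Since each $k \in \Sscr_\phi\s{ADD}$ contributes at least one to the cumulative decrease and $s_k \geq 0$ for all $k$, the total decrease cannot exceed the total increase plus $s_0$, yielding $|\Sscr_\phi\s{ADD}| \leq s_0 + n\,|\Sscr_\beta| \leq n(1 + |\Sscr_\beta|) < \infty$. The main obstacle is the careful case analysis for the two $\Sscr_\phi\s{ADD}$ exit points in Algorithm~\ref{alg:ls-phi}, especially verifying that the ``argsup'' definition of $\alpha_B$ really does force some originally-nonzero coordinate to vanish; once that is handled, the counting step is routine.
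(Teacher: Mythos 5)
Your proof is correct and rests on exactly the same two observations as the paper's: for $k\in\Sscr_\phi$ the search direction vanishes on $\Iscr^0(x_k)$ so zero variables remain zero, and each $k\in\Sscr_\phi\s{ADD}$ forces at least one nonzero variable to become zero, with Lemma~\ref{lem:Sbeta-finite} controlling the $\Sscr_\beta$ iterations. The only difference is bookkeeping: the paper passes to a tail $k\geq k_1$ containing no $\Sscr_\beta$ iterations and derives a contradiction from the fact that at most $n$ variables can become zero, whereas you run a global budget on the support size $s_k$ over all iterations, which moreover yields the explicit bound $|\Sscr_\phi\s{ADD}|\leq n(1+|\Sscr_\beta|)$.
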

\begin{proof}
To derive a contradiction, suppose that $|\Sscr_\phi\s{ADD}| = \infty$, which in particular means that Algorithm~\ref{alg:main.x} does not terminate finitely.
Since Lemma~\ref{lem:Sbeta-finite} shows that $\Sscr_\beta$ is finite, we may also conclude that there exists an iteration $k_1$ such that $k\in\Sscr_\phi = \Sscr_\phi\s{ADD} \cup \Sscr_\phi\s{SD}$ for all $k \geq k_1$.

We proceed by making two observations.  First, if the $i$th component of $x_k$ becomes zero for some iteration $k \geq k_1$, it will remain zero for the remainder of the iterations.  This can be seen by using lines~\ref{line:d-phi} and~\ref{line:Ik-phi} of Algorithm~\ref{alg:main.x} and the definition of $\phi(x_k)$ to deduce that if $[d_k]_i \neq 0$, then $i\in\Iscr_k \subseteq \{\ell:[\phi(x_k)]_\ell \neq 0\} \subseteq \Iscr^+(x_k) \cup \Iscr^-(x_k)$ for all $k \geq k_1$; equivalently, if $i\in\Iscr^0(x_k)$, then $[d_k]_i = 0$.  The second observation is that at least one nonzero component of $x_k$ becomes zero at $x_{k+1}$ for each $k\in\Sscr_\phi\s{ADD}$. This can be seen by construction of Algorithm~\ref{alg:ls-phi} when it is called in line~\ref{line:ls-phi} of Algorithm~\ref{alg:main.x}.  Together, these observations contradict $|\Sscr_\phi\s{ADD}| = \infty$, since at most $n$ variables may become zero. Thus, we must conclude that $|\Sscr_\phi\s{ADD}| < \infty$.
\end{proof}

To establish that $\Sscr_\phi\s{SD}$ is finite, we require the following two lemmas.  The first lemma gives a bound on the size of $\dbar_k$ that holds whenever $k\in\Sscr_\phi$.

\begin{lemma} \label{lem:dbar-bound}
If $k\in\Sscr_\phi$, then $\norm{\dbar_k} \leq (2/\theta_{\min}) \norm{g_k}$ where $\theta_{\min} > 0$ is defined in Assumption~\ref{ass:main}.
\end{lemma}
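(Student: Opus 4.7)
The plan is to exploit the two pieces of information available: the subproblem acceptance condition $m_k(\dbar_k)\le m_k(0)$, and uniform positive definiteness of $H_k$ inherited from Assumption~\ref{ass:main}.

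First I would establish that $H_k\succeq\theta_{\min}I$ on $\Re^{|\Iscr_k|}$. Here I need to be slightly careful because $H_k$ is set in line~\ref{line:gH} to a principal submatrix of $\Hess F(x_k)$, not of $\Hess f(x_k)$. The point is that $k\in\Sscr_\phi$ and $\Iscr_k\subseteq\{i:[\phi(x_k)]_i\neq 0\}$, and by the definition of $\phi$ every such $i$ lies in $\Iscr^+(x_k)\cup\Iscr^-(x_k)$, i.e., $[x_k]_i\neq 0$. The $\ell_1$ term is smooth and linear in each coordinate direction where $[x_k]_i\neq 0$, so $[\Hess F(x_k)]_{\Iscr_k\Iscr_k}=[\Hess f(x_k)]_{\Iscr_k\Iscr_k}$. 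Now for any $v\in\Re^{|\Iscr_k|}$, letting $\hat v\in\Re^n$ be its zero-extension, the uniform positive definiteness of $\Hess f(x_k)$ on $\Re^n$ gives $v^T H_k v = \hat v^T\Hess f(x_k)\hat v\ge \theta_{\min}\|\hat v\|^2=\theta_{\min}\|v\|^2$.

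Next I would apply the condition $m_k(\dbar_k)\le m_k(0)=0$ from line~\ref{line:dbar}, i.e.,
\begin{equation*}
g_k^T\dbar_k+\tfrac{1}{2}\dbar_k^T H_k\dbar_k\le 0,
\end{equation*}
which rearranges to $\tfrac{1}{2}\dbar_k^T H_k\dbar_k\le -g_k^T\dbar_k\le\|g_k\|\|\dbar_k\|$ by Cauchy--Schwarz. Combining with the lower eigenvalue bound on $H_k$ proved above yields $\tfrac{\theta_{\min}}{2}\|\dbar_k\|^2\le\|g_k\|\|\dbar_k\|$, and dividing by $\|\dbar_k\|$ (or noting the bound is trivial if $\dbar_k=0$) gives the claimed $\|\dbar_k\|\le(2/\theta_{\min})\|g_k\|$.

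The only real subtlety is the first step: identifying that $H_k$ is in fact a principal submatrix of $\Hess f(x_k)$ (thanks to the characterization of $\Iscr_k$) so that the uniform positive definiteness hypothesis from Assumption~\ref{ass:main} can be transferred to a lower bound on $v^T H_k v$. Once that is in place, the remainder is a one-line manipulation of the model-decrease condition and Cauchy--Schwarz, with no appeal to the first condition $g_k^T\dbar_k\le g_k^T d_k^R$ in line~\ref{line:dbar}.
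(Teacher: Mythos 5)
Your proof is correct, and it takes a genuinely different and more economical route than the paper's. The paper proves the bound geometrically: it introduces the Newton step $d_k\s{N} = -H_k\inv g_k$, shifts the model to $\mbar_k(d) := m_k(d_k\s{N}+d)$, observes that the condition $m_k(\dbar_k)\le m_k(0)$ places $\dbar_k - d_k\s{N}$ in the ellipsoidal level set $\{d:\mbar_k(d)\le 0\}$, solves a norm-maximization problem over that level set (the maximizer lies along the eigenvector of the leftmost eigenvalue $\theta\ge\theta_{\min}$ of $H_k$) to get $\|\dbar_k-d_k\s{N}\|\le\|g_k\|/\theta$, and finishes with the triangle inequality and $\|d_k\s{N}\|\le\|H_k\inv\|\|g_k\|$. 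You bypass all of this: from $g_k\T\dbar_k+\tfrac12\dbar_k\T H_k\dbar_k\le 0$ you get $\tfrac{\theta_{\min}}{2}\|\dbar_k\|^2\le -g_k\T\dbar_k\le\|g_k\|\|\dbar_k\|$ by Cauchy--Schwarz and the eigenvalue bound, and divide. Both arguments use only the second condition in line~\ref{line:dbar} (the paper likewise never invokes $g_k\T\dbar_k\le g_k\T d_k^R$ here) and both land on the same constant $2/\theta_{\min}$, so nothing is lost by the shortcut; if anything, your preliminary step justifying $v\T H_k v\ge\theta_{\min}\|v\|^2$ --- noting that every $i\in\Iscr_k$ has $[x_k]_i\neq 0$, so $[\Hess F(x_k)]_{\Iscr_k\Iscr_k}=[\Hess f(x_k)]_{\Iscr_k\Iscr_k}$, and then transferring the uniform positive definiteness via zero-extension --- makes explicit a point the paper's proof leaves implicit when it asserts $\theta\ge\theta_{\min}$ for the submatrix $H_k$. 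The paper's level-set construction does yield a slightly richer geometric picture (a characterization of the farthest admissible point from the Newton step), but for the stated lemma your direct quadratic-inequality argument is simpler and fully rigorous.
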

\begin{proof}
Let $k\in\Sscr_\phi$ so that $\dbar_k$ is computed in line~\ref{line:dbar} of Algorithm~\ref{alg:main.x}, and let $d_k\s{N}$ be the Newton step satisfying $H_k d_k\s{N} = -g_k$ with $H_k$ and $g_k$ defined in line~\ref{line:gH} of Algorithm~\ref{alg:main.x}.  It follows  that
\begin{equation} \label{eq:Newton-bd}
\norm{d_k\s{N}} \leq \norm{H_k\inv}\norm{g_k}.
\end{equation}
Let us also define the quadratic function $\mbar_k(d) := m_k(d_k\s{N}+d)$ and the associated level set $\Lscr_k := \{d: \mbar_k(d) \leq 0 \}$. We then see that
\begin{equation} \label{eq:in-Lscr}
   (\dbar_k  - d_k\s{N}) \in \Lscr_k
\end{equation}
since $\mbar_k(\dbar_k - d_k\s{N}) = m_k(\dbar_k) \leq m_k(0) = 0$, where we have used the condition $m_k(\dbar_k) \leq m_k(0)$ that is required to hold in line~\ref{line:dbar} of Algorithm~\ref{alg:main.x}.

We are now interested in finding a point in $\Lscr_k$ with largest norm.  To characterize such a point, we consider the optimization problem
\begin{equation} \label{prob.d}
\maximize{d\in\Re^n}  \sgap \half \norm{d}^2 \sgap
\subject\sgap d \in\Lscr_k.
\end{equation}
It is not difficult to prove that a global maximizer of problem~\eqref{prob.d} is $d_* := \alpha_* v$ with $\alpha_*^2 := (-g_k\T d_k\s{N})/\theta$, where $(v,\theta)$ with $\norm{v} = 1$ is an eigenpair corresponding to the left-most eigenvalue $\theta \geq \theta_{\min}$ of $H_k$.  Thus, it follows that $\norm{d}^2 \leq \norm{d_*}^2$ for all $d \in\Lscr_k$.  Combining this with~\eqref{eq:in-Lscr}, the definition of $d_*$, and~\eqref{eq:Newton-bd}  shows that
\begin{align*}
  \norm{\dbar_k - d_k\s{N}}^2 \leq \norm{d_*}^2 = \alpha_*^2\norm{v}^2 = \frac{-g_k\T d_k\s{N}}{\theta}
  \leq \frac{\norm{g_k}\norm{d_k\s{N}}}{\theta} \leq \frac{\norm{H_k\inv}\norm{g_k}^2}{\theta}
  = \left(\frac{\norm{g_k}}{\theta}\right)^2.
\end{align*}
By combining the previous inequality with the triangle inequality and~\eqref{eq:Newton-bd}, we obtain
$$
\norm{\dbar_k} \leq \norm{\dbar_k-d_k\s{N}} + \norm{d_k\s{N}}
\leq \frac{\norm{g_k}}{\theta} + \frac{\norm{g_k}}{\theta}
= \frac{2\norm{g_k}}{\theta}
\leq \frac{2\norm{g_k}}{\theta_{\min}},
$$
which complete the proof.
\end{proof}

The next result establishes a bound on the decrease in $F$ when $k\in\Sscr_\phi\s{SD}$.

\begin{lemma} \label{lem:dec-phi-SD}
If $k\in\Sscr_\phi\s{SD}$, then $x_{k+1}$ satisfies
\begin{equation} \label{F-dec-phiSD}
F(x_{k+1})
\leq F(x_k) - \kappa_\phi \max\{\gamma^{-2}\norm{\beta(x_k)}^2,\norm{\phi(x_k)}^2\},
\end{equation}
where
$\kappa_\phi := \eta_\phi^2 \min\left\{  \frac{\eta}{\theta_{\max}}, \frac{\eta\xi(1-\eta) \theta_{\min}^2}{2\theta_{\max}^3}  \right\} > 0$.
\end{lemma}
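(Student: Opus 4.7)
The plan is a standard Armijo-style analysis adapted to the reduced-space setting: derive a lower bound on the accepted step size $\xi^j$ at termination of Algorithm~\ref{alg:ls-phi}, apply the sufficient decrease condition satisfied at line~\ref{line:ls-phi-SD}, and convert the resulting bound on $\norm{g_k}^2$ into one involving $\norm{\phi(x_k)}^2$. The central observation enabling this is that, for $k\in\Sscr_\phi\s{SD}$, the accepted iterate $x_{k+1}$ and all trial iterates considered inside the Armijo loop lie in the closed orthant of $x_k$, so $F$ agrees with the smooth mapping $\alpha \mapsto f(x_k+\alpha d_k)+\lambda\,\text{sgn}(x_k)^{T}(x_k+\alpha d_k)$ on the segment $\{x_k+\alpha d_k : \alpha\in[0,\alpha_B]\}$. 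Assumption~\ref{ass:main} then yields the quadratic upper bound
$$
F(x_k+\alpha d_k) \leq F(x_k) + \alpha g_k^{T}\bar{d}_k + \frac{\theta_{\max}}{2}\alpha^2\norm{\bar{d}_k}^2 \quad \text{for all } \alpha\in[0,\alpha_B].
$$

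If the accepted $\xi^j$ is strictly less than the largest initial trial step, then the previously attempted step must have violated the Armijo condition; combining that failure with the quadratic upper bound and rearranging yields $\xi^j \geq 2\xi(1-\eta)(-g_k^{T}\bar{d}_k)/(\theta_{\max}\norm{\bar{d}_k}^2)$. The first condition in line~\ref{line:dbar} together with the definition of $d_k^{R}$ in line~\ref{line:dR} gives $-g_k^{T}\bar{d}_k \geq -g_k^{T}d_k^{R} = \alpha_k\norm{g_k}^2 \geq \norm{g_k}^2/\theta_{\max}$, while Lemma~\ref{lem:dbar-bound} gives $\norm{\bar{d}_k} \leq 2\norm{g_k}/\theta_{\min}$. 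Substituting into the previous bound and combining with the trivial $\xi^j=1$ case delivers
$$
\xi^j \geq \min\left\{1,\, \frac{\xi(1-\eta)\theta_{\min}^2}{2\theta_{\max}^2}\right\}.
$$

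Combining the Armijo inequality at line~\ref{line:ls-phi-SD} with $-g_k^{T}\bar{d}_k \geq \norm{g_k}^2/\theta_{\max}$ then yields $F(x_{k+1}) \leq F(x_k) - (\eta\xi^j/\theta_{\max})\norm{g_k}^2$, and plugging in the step-size lower bound gives a decrease proportional to $\norm{g_k}^2$ with coefficient $\min\{\eta/\theta_{\max},\, \eta\xi(1-\eta)\theta_{\min}^2/(2\theta_{\max}^3)\}$. To translate this into a bound in terms of $\norm{\phi(x_k)}^2$, a short case analysis of the definition of $\phi$ establishes $|[\phi(x_k)]_i| \leq |[g_k]_i|$ for each $i\in\Iscr_k$; combined with the line~\ref{line:Ik-phi} requirement $\norm{[\phi(x_k)]_{\Iscr_k}} \geq \eta_\phi\norm{\phi(x_k)}$, this gives $\norm{g_k}^2 \geq \eta_\phi^2\norm{\phi(x_k)}^2$, producing exactly the coefficient $\kappa_\phi$ in the claimed inequality. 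The $\max$ in the stated bound is then immediate from the test $\norm{\beta(x_k)} \leq \gamma\norm{\phi(x_k)}$ characterizing $k\in\Sscr_\phi$.

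The main obstacle I anticipate is the bookkeeping around the nonstandard entry points to the Armijo loop in Algorithm~\ref{alg:ls-phi}. If the while loop exits at $j=0$, then $y_0 = \Proj{x_k+d_k\,;x_k}$ collapses to $x_k+d_k$ (since $\text{sgn}(y_0) = \text{sgn}(x_k)$ forces component-wise equality), and the clean analysis above applies verbatim. If $j\neq 0$ and the $\alpha_B$-based check at line~\ref{line:check-B} fails, then the first trial tested inside the Armijo loop is $y_j = x_k+\alpha_B d_k$, which is at the orthant boundary and thus cannot itself be the accepted iterate for $k\in\Sscr_\phi\s{SD}$; the loop must therefore proceed to increment $j$ and test iterates of the clean form $x_k+\xi^{j'} d_k$ with $\xi^{j'} < \alpha_B$, and the contradiction argument above applies to whichever of these fails just before acceptance. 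Verifying that this reduction is airtight in every case, so that the lower bound on the accepted step size from the backtracking argument is always available, is where the bulk of the technical care will lie.
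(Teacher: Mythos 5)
Your proposal follows essentially the same route as the paper's proof: the orthant-restricted quadratic upper bound with $\theta_{\max}$, a lower bound on the accepted step from the Armijo failure just before acceptance (using the failed check at $\alpha_B$, i.e.~\eqref{eq.merida}, when $j>0$), the estimates $|g_k\T \dbar_k| \geq \|g_k\|^2/\theta_{\max}$ from lines~\ref{line:dR}--\ref{line:dbar} and $\|\dbar_k\| \leq 2\|g_k\|/\theta_{\min}$ from Lemma~\ref{lem:dbar-bound}, and the conversion $\|g_k\| \geq \|[\phi(x_k)]_{\Iscr_k}\| \geq \eta_\phi \|\phi(x_k)\|$ with the $\max$ supplied by the test in line~\ref{line:main.if}. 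Your bounds assemble to exactly the paper's $\kappa_\phi$, and your extra bookkeeping about the $\alpha_B$ trial point not being acceptable for $k\in\Sscr_\phi\s{SD}$ is consistent with (indeed slightly more careful than) the paper's own treatment of that entry point to the Armijo loop.
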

\begin{proof}
Let $k\in\Sscr_\phi\s{SD}$.  We consider two cases.  First, suppose that $j = 0$ when line~\ref{line:check-j} in Algorithm~\ref{alg:ls-phi} is reached.  In this case, it follows by construction of Algorithm~\ref{alg:ls-phi} that $\text{sgn}(y_0) = \text{sgn}(x_k + d_k) = \text{sgn}(x_k)$, i.e., the full step $d_k$ and the vector $x_k$ are contained in the same orthant.  Consequently, the loop that starts in line~\ref{line:ls-phi-loop} is simply a backtracking Armijo line search. Thus, 
if
\begin{equation} \label{eq:step-bound-1}
\xi^j \in
\left(0,\frac{2(\eta-1)[\Grad F(x_k)]_{\Iscr_k}\T [d_k]_{\Iscr_k}}{\theta_{\max}\norm{[d_k]_{\Iscr_k}}^2}\right]
\equiv \left(0,\frac{2(\eta-1)g_k\T \dbar_k}{\theta_{\max}\norm{\dbar_k}^2}\right],
\end{equation}
then, by well known properties of twice continuously differentiable functions with Lipschitz continuous gradients, we have that
\begin{align*}
  F(x_k+\xi^jd_k)
  &\leq F(x_k) + \xi^j[\nabla F(x_k)]_{\Iscr_k}^T[d_k]_{\Iscr_k} + \thalf \xi^{2j} \theta_{\max} \|[d_k]_{\Iscr_k}\|^2 \\
  &\leq F(x_k) + \xi^j[\nabla F(x_k)]_{\Iscr_k}^T[d_k]_{\Iscr_k} + \xi^j(\eta-1)[\nabla F(x_k)]_{\Iscr_k}^T[d_k]_{\Iscr_k} \\
  &=    F(x_k) + \eta \xi^j[\nabla F(x_k)]_{\Iscr_k}^T[d_k]_{\Iscr_k},
\end{align*}
i.e., the inequality in line~\ref{line:ls-phi-SD} will hold whenever \eqref{eq:step-bound-1} holds.
On the other hand, suppose that $j > 0$ when line~\ref{line:check-j} in Algorithm~\ref{alg:ls-phi} is reached.  Then, since $k\in\Sscr_\phi\s{SD}$, we may conclude that
\bequation\label{eq.merida}
  F(x_k+\alpha_B d_k) > F(x_k) + \eta \alpha_B [\Grad F(x_k)]_{\Iscr_k}^T [d_k]_{\Iscr_k} = F(x_k) + \eta \alpha_B g_k^T \dbar_k
\eequation
in line~\ref{line:check-B}, because otherwise we would have $k\in\Sscr_\phi\s{ADD} = \Sscr_\phi \setminus \Sscr_\phi\s{SD}$. Since no points of non-differentiability of $\|\cdot\|_1$ exist on the line segment connecting $x_k$ to $x_k+\alpha_B d_k$ (which follows by the definition of $\alpha_B$ in line~\ref{line:alphaB} of Algorithm~\ref{alg:ls-phi}), we can conclude for the same reason that we acquired~\eqref{eq:step-bound-1} that \eqref{eq.merida} implies
$$
\alpha_B > \frac{2(1-\eta)|g_k\T \dbar_k|}{\theta_{\max}\norm{\dbar_k}^2}.
$$
Combining these two cases, we have that the line search procedure in Algorithm~\ref{alg:ls-phi} will terminate with $x_{k+1} = x_k + \xi^j d_k$ where
\begin{equation} \label{eq:dec-phi}
  \xi^j \geq \min\left\{1, \frac{2\xi(1-\eta)|g_k\T \dbar_k|}{\theta_{\max}\norm{\dbar_k}^2}\right\}\ \ \text{and} \ \
  F(x_{k+1}) \leq F(x_k) + \eta \xi^j g_k^T \dbar_k.
\end{equation}
Let us now consider two cases.  First, suppose that $\xi^j = 1$ is returned from the line search, i.e., $j = 0$.  Then, it follows from
\eqref{eq:dec-phi}, lines~\ref{line:dbar} and~\ref{line:dR} of Algorithm~\ref{alg:main.x}, the Cauchy-Schwarz inequality, and Assumption~\ref{ass:main}
that
\begin{align}
 F(x_k) - F(x_{k+1}) &\geq - \eta \xi^j g_k^T \dbar_k = \eta |g_k\T \dbar_k|
 \geq \eta |g_k\T d_k^R| \nonumber \\
 &= \eta \alpha_k \norm{g_k}^2
 = \eta \frac{\norm{g_k}^4}{g_k\T H_k g_k}
 \geq \frac{\eta}{\theta_{\max}} \norm{g_k}^2. \label{eq:dec-phi-2}
\end{align}
Now suppose that $\xi^j < 1$.  Then, it follows from~\eqref{eq:dec-phi}, the inequality $|g_k\T\dbar_k| \geq \norm{g_k}^2/\theta_{\max}$ established while deriving~\eqref{eq:dec-phi-2}, and Lemma~\ref{lem:dbar-bound} that
\begin{align}
 F(x_k) - F(x_{k+1}) &\geq - \eta \xi^j g_k^T \dbar_k = \eta \xi^j|g_k\T \dbar_k|
 \geq \frac{2\eta\xi(1-\eta) |g_k\T \dbar_k|^2}{\theta_{\max}\norm{\dbar_k}^2}  \nonumber \\
 &\geq \frac{2\eta\xi(1-\eta) \theta_{\min}^2\norm{g_k}^4}{4\theta_{\max}^3\norm{g_k}^2}
 =  \left(\frac{\eta\xi(1-\eta) \theta_{\min}^2}{2\theta_{\max}^3}\right) \norm{g_k}^2. \label{eq:dec-phi-3}
\end{align}
Combining~\eqref{eq:dec-phi-2} and \eqref{eq:dec-phi-3} for the two cases establishes that
\begin{align*}
  F(x_k) - F(x_{k+1})
  &\geq \min\left\{  \frac{\eta}{\theta_{\max}}, \frac{\eta\xi(1-\eta) \theta_{\min}^2}{2\theta_{\max}^3}  \right\} \norm{g_k}^2 \\
  &=   \min\left\{  \frac{\eta}{\theta_{\max}}, \frac{\eta\xi(1-\eta) \theta_{\min}^2}{2\theta_{\max}^3}  \right\} \norm{[\Grad F(x_k)]_{\Iscr_k}}^2 \\
   &\geq  \min\left\{  \frac{\eta}{\theta_{\max}}, \frac{\eta\xi(1-\eta) \theta_{\min}^2}{2\theta_{\max}^3}  \right\} \norm{[\phi(x_k)]_{\Iscr_k}}^2 \\
    &\geq  \eta_\phi^2 \min\left\{  \frac{\eta}{\theta_{\max}}, \frac{\eta\xi(1-\eta) \theta_{\min}^2}{2\theta_{\max}^3}  \right\} \norm{\phi(x_k)}^2 \ \ \text{for $k\in\Sscr_\phi\s{SD}$},
 \end{align*}
where we have also used the condition in lines~\ref{line:Ik-phi} of Algorithm~\ref{alg:main.x} and the definition of $\phi(x_k)$. The inequality~\eqref{F-dec-phiSD} follows from the previous inequality and the fact that $\norm{\beta(x_k)} \leq \gamma\norm{\phi(x_k)}$ for all $k\in\Sscr_\phi \subseteq \Sscr_\phi\s{SD}$ as can be seen by line~\ref{line:main.if} of Algorithm~\ref{alg:main.x}.
\end{proof}

We may now establish finiteness of the index set $\Sscr_\phi\s{SD}$.

\begin{lemma} \label{lem:SphiSD-finite}
The index set $\Sscr_\phi\s{SD}$ is finite, i.e., $|\Sscr_\phi\s{SD}| < \infty$.
\end{lemma}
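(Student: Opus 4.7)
The approach I would take is proof by contradiction: suppose $|\Sscr_\phi\s{SD}| = \infty$. Then Algorithm~\ref{alg:main.x} never terminates, so the test in line~\ref{line:check-termination} never passes and we have $\max\{\norm{\beta(x_k)},\norm{\phi(x_k)}\} > \epsilon$ for all $k\geq 0$. For any $k\in\Sscr_\phi\s{SD}\subseteq\Sscr_\phi$, the test in line~\ref{line:main.if} enforces $\norm{\beta(x_k)} \leq \gamma\norm{\phi(x_k)}$, and combining these two observations yields a uniform lower bound
$$\norm{\phi(x_k)} \geq \frac{\epsilon}{\max\{1,\gamma\}} \mgap \text{for every } k\in\Sscr_\phi\s{SD}.$$

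Next I would verify that $\{F(x_k)\}$ is non-increasing across iterations of every type, so that the telescoping argument below works cleanly. For $k\in\Sscr_\beta$ this is an immediate consequence of Lemma~\ref{lem:suf-dec-beta}, and for $k\in\Sscr_\phi\s{SD}$ it is supplied by Lemma~\ref{lem:dec-phi-SD}. The case $k\in\Sscr_\phi\s{ADD}$ requires a brief trace through Algorithm~\ref{alg:ls-phi}: if the procedure terminates in line~\ref{line:ls-phi-return1}, then $F(x_{k+1}) \leq F(x_k)$ is enforced by the guarding \textbf{if}; if it terminates in line~\ref{line:ls-phi-return2}, the Armijo-type test in line~\ref{line:check-B} gives $F(x_{k+1}) \leq F(x_k) + \eta\alpha_B g_k\T\dbar_k$, and the right-hand side is bounded above by $F(x_k)$ because lines~\ref{line:dbar} and~\ref{line:dR} of Algorithm~\ref{alg:main.x}, together with $\alpha_k > 0$ (which follows from the positive definiteness of $H_k$ in Assumption~\ref{ass:main}), imply $g_k\T\dbar_k \leq g_k\T d_k^R = -\alpha_k\norm{g_k}^2 \leq 0$.

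With monotonicity in hand, I would telescope the estimate from Lemma~\ref{lem:dec-phi-SD}. Each $k\in\Sscr_\phi\s{SD}$ contributes a decrease of at least $\kappa_\phi\norm{\phi(x_k)}^2 \geq \kappa_\phi\epsilon^2/\max\{1,\gamma^2\}$, while the remaining iterations do not increase $F$, so for every positive integer $\ell$,
$$F(x_0) - F(x_{\ell+1}) \geq \frac{\kappa_\phi\,\epsilon^2}{\max\{1,\gamma^2\}}\,\bigl|\{k\in\Sscr_\phi\s{SD}:k\leq\ell\}\bigr|.$$
Letting $\ell\to\infty$ under the hypothesis $|\Sscr_\phi\s{SD}| = \infty$ drives $F(x_{\ell+1}) \to -\infty$, contradicting the lower boundedness of $F$ on the level set $\Lscr$ stipulated by Assumption~\ref{ass:main}. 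Hence $|\Sscr_\phi\s{SD}| < \infty$. The only nontrivial bookkeeping is the monotonicity check on $\Sscr_\phi\s{ADD}$ iterations; everything else is a routine combination of Lemma~\ref{lem:dec-phi-SD} with the termination test and the line~\ref{line:main.if} inequality.
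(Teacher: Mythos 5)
Your proof is correct, but it takes a genuinely different route from the paper's. The paper first invokes Lemmas~\ref{lem:Sbeta-finite} and~\ref{lem:SphiAdd-finite} to conclude that there exists $k_1$ with $k\in\Sscr_\phi\s{SD}$ for \emph{all} $k\geq k_1$, and then telescopes only over this tail, where every summand enjoys the decrease bound~\eqref{F-dec-phiSD}; the uniform lower bound $\kappa_\phi\min\{\gamma^{-2},1\}\epsilon^2$ on each term comes directly from the $\max$ in~\eqref{F-dec-phiSD} together with the non-termination inequality, so no separate bound on $\norm{\phi(x_k)}$ alone is needed. You instead telescope from $k=0$ over all iterations, discarding the non-$\Sscr_\phi\s{SD}$ terms, which forces you to verify monotonicity of $\{F(x_k)\}$ across every iteration type---your trace through Algorithm~\ref{alg:ls-phi} for $k\in\Sscr_\phi\s{ADD}$ (the guard at line~\ref{line:ls-phi-return1}, and at line~\ref{line:ls-phi-return2} the sign of $g_k\T\dbar_k \leq g_k\T d_k^R = -\alpha_k\norm{g_k}^2 \leq 0$ via positive definiteness of $H_k$) is exactly the right bookkeeping, and your bound $\norm{\phi(x_k)} > \epsilon/\max\{1,\gamma\}$ correctly combines line~\ref{line:check-termination} with line~\ref{line:main.if}, yielding the same per-iteration constant $\kappa_\phi\epsilon^2/\max\{1,\gamma^2\}$ as the paper's. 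What each approach buys: the paper's tail argument is shorter given that Lemmas~\ref{lem:Sbeta-finite} and~\ref{lem:SphiAdd-finite} are already in hand (monotonicity outside the tail is never needed because no such iterations appear in the sum), whereas your argument makes this lemma logically independent of Lemma~\ref{lem:SphiAdd-finite}---it would survive even if $\Sscr_\phi\s{ADD}$ were infinite---at the modest cost of the explicit monotonicity verification, which the paper itself relies on elsewhere (e.g., in the proof of Lemma~\ref{lem:Sbeta-finite}).
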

\begin{proof}
To derive a contradiction, suppose that $|\Sscr_\phi\s{SD}| = \infty$, which means that Algorithm~\ref{alg:main.x} does not terminate finitely.  Thus, it follows from line~\ref{line:check-termination} of Algorithm~\ref{alg:main.x} that $\max\{\norm{\beta(x_k)},\norm{\phi(x_k)}\} > \epsilon$ for all $k \geq 0$.  Also, it follows from Lemmas~\ref{lem:Sbeta-finite} and \ref{lem:SphiAdd-finite} that there exists an iteration number $k_1$ such that $k\in\Sscr_\phi\s{SD}$ for all $k \geq k_1$. Thus, with Lemma~\ref{lem:dec-phi-SD}, we have for all $\ell \geq k_1$ that
\begin{align*}
F(x_{k_1}) - F(x_{\ell+1})
&= \sum_{k=k_1}^\ell \big[F(x_k) - F(x_{k+1})\big] \\
&=  \sum_{\substack{k\in\Sscr_\phi\s{SD},k_1\leq k \leq \ell}} \big[F(x_k) - F(x_{k+1})\big]  \\
&\geq  \sum_{\substack{k\in\Sscr_\phi\s{SD},k_1\leq k \leq \ell}}  \kappa_\phi \max\{\gamma^{-2}\norm{\beta(x_k)}^2,\norm{\phi(x_k)}^2\} \\
&\geq  \sum_{\substack{k\in\Sscr_\phi\s{SD},k_1\leq k \leq \ell}}  \kappa_\phi \min\{\gamma^{-2},1\} \epsilon^2.
\end{align*}
Rearranging the previous inequality shows that
\begin{align*}
\lim_{l\to\infty} F(x_{\ell+1})
&\leq \lim_{\ell \to\infty} \Big[F(x_{k_1}) - \sum_{\substack{k\in\Sscr_\phi\s{SD},k_1\leq k \leq \ell}}  \kappa_\phi\min\{\gamma^{-2},1\}\epsilon^2\Big] \\
&=  F(x_{k_1}) - \sum_{\substack{k\in\Sscr_\phi\s{SD},k_1\leq k}}  \kappa_\phi \min\{\gamma^{-2},1\} \epsilon^2
= -\infty,
\end{align*}
which contradicts Assumption~\ref{ass:main}.  Thus, we conclude that $|\Sscr_\phi\s{SD}| < \infty$.
\end{proof}

We now prove our first main convergence result.

\begin{theorem}
Algorithm~\ref{alg:main.x} terminates finitely.
\end{theorem}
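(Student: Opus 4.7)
The plan is to combine the three finiteness results (Lemmas~\ref{lem:Sbeta-finite}, \ref{lem:SphiAdd-finite}, and \ref{lem:SphiSD-finite}) with the observation that every iteration of Algorithm~\ref{alg:main.x} is counted by exactly one of the index sets $\Sscr_\beta$, $\Sscr_\phi\s{ADD}$, or $\Sscr_\phi\s{SD}$. First, I would argue for a contradiction: suppose Algorithm~\ref{alg:main.x} does \emph{not} terminate finitely. Then the termination test in line~\ref{line:check-termination} fails at every iteration $k \geq 0$, which means the \textbf{for}-loop executes for all $k \in \mathbb{N}$, and hence the index set of all iterations, namely $\mathbb{N}$, is infinite.

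Next I would argue that $\mathbb{N}$ must equal $\Sscr_\beta \cup \Sscr_\phi\s{ADD} \cup \Sscr_\phi\s{SD}$. This follows by inspection of Algorithm~\ref{alg:main.x}: the conditional at line~\ref{line:main.if} partitions the iteration index into the two sets $\Sscr_\phi$ (when the \emph{if} branch is taken) and $\Sscr_\beta$ (when the \emph{else} branch is taken). By the definitions of $\Sscr_\phi\s{ADD}$ and $\Sscr_\phi\s{SD}$ in terms of whether $\text{sgn}(x_{k+1}) \neq \text{sgn}(x_k)$ or $\text{sgn}(x_{k+1}) = \text{sgn}(x_k)$, we have $\Sscr_\phi = \Sscr_\phi\s{ADD} \cup \Sscr_\phi\s{SD}$ as a disjoint union. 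Hence every iteration index lies in one of the three sets.

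Finally I would invoke the three lemmas: Lemma~\ref{lem:Sbeta-finite} gives $|\Sscr_\beta| < \infty$, Lemma~\ref{lem:SphiAdd-finite} gives $|\Sscr_\phi\s{ADD}| < \infty$, and Lemma~\ref{lem:SphiSD-finite} gives $|\Sscr_\phi\s{SD}| < \infty$. Therefore
\[
|\mathbb{N}| \;=\; |\Sscr_\beta \cup \Sscr_\phi\s{ADD} \cup \Sscr_\phi\s{SD}| \;\leq\; |\Sscr_\beta| + |\Sscr_\phi\s{ADD}| + |\Sscr_\phi\s{SD}| \;<\; \infty,
\]
which is the desired contradiction. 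Consequently, Algorithm~\ref{alg:main.x} must terminate finitely, returning $x_k$ in line~\ref{line:return} for some finite $k$.

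There is no significant obstacle here: once the three finiteness lemmas are in hand, the theorem reduces to a short bookkeeping argument about the partition of iteration indices induced by lines~\ref{line:main.if}--\ref{line:ls-beta} of Algorithm~\ref{alg:main.x}. The only subtlety to state clearly is that $\Sscr_\phi\s{ADD}$ and $\Sscr_\phi\s{SD}$ are defined so as to cover all of $\Sscr_\phi$ (which they do, by the dichotomy of the sign comparison), so no iteration is missed in the union bound.
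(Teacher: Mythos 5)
Your proposal is correct and follows essentially the same route as the paper: the paper's own proof is exactly the observation that every iteration index lies in $\Sscr_\beta \cup \Sscr_\phi\s{ADD} \cup \Sscr_\phi\s{SD}$, so finiteness follows from Lemmas~\ref{lem:Sbeta-finite}, \ref{lem:SphiAdd-finite}, and \ref{lem:SphiSD-finite}. Your contradiction framing and explicit union bound merely elaborate the same bookkeeping argument.
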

\begin{proof}
Since each iteration number $k$ generated in the algorithm is an element of $\Sscr_\beta \cup \Sscr_\phi\s{ADD} \cup \Sscr_\phi\s{SD}$, the result follows by Lemmas~\ref{lem:Sbeta-finite}, \ref{lem:SphiAdd-finite}, and \ref{lem:SphiSD-finite}.
\end{proof}

Our final convergence result states what happens when the finite termination criterion is removed from Algorithm~\ref{alg:main.x}.

\begin{theorem}
Let $x_*$ be the unique solution to problem~\eqref{prob.x}.  If $\epsilon$ in the finite termination condition in line~\ref{line:check-termination} of Algorithm~\ref{alg:main.x} is replaced by zero, then either:
\begin{itemize}
\item[(i)] there exists an iteration $k$ such that $x_k = x_*$; or 
\item[(ii)] infinitely many iterations $\{x_k\}$ are computed and they satisfy
   $$
   \lim_{k\to\infty} x_k = x_*, \ \
   \lim_{k\to\infty} \varphi(x_k) = 0, \ \  \text{and} \ \
   \lim_{k\to\infty} \beta(x_k) = 0.
   $$
\end{itemize}
\end{theorem}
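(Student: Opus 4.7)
The plan is to handle the two cases separately. For Case~(i), if $\beta(x_k) = 0$ and $\phi(x_k) = 0$ at some iteration $k$, then the second sentence of Lemma~\ref{lem:criticality} shows that $x_k$ solves~\eqref{prob.x}, so $x_k = x_*$ by uniqueness of the minimizer (guaranteed by strong convexity of $f$, which follows from $\theta_{\min}>0$). So I may assume the algorithm generates an infinite sequence $\{x_k\}$ and must establish Case~(ii).

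First I would observe that $\{F(x_k)\}$ is non-increasing: for $k\in\Sscr_\beta$ this follows from Lemma~\ref{lem:suf-dec-beta}, for $k\in\Sscr_\phi\s{SD}$ from Lemma~\ref{lem:dec-phi-SD}, and for $k\in\Sscr_\phi\s{ADD}$ from the two return statements of Algorithm~\ref{alg:ls-phi} (one enforces $F(x_{k+1}) \leq F(x_k)$ directly, the other an Armijo-type condition with negative directional derivative $g_k\T\dbar_k<0$). Combined with Assumption~\ref{ass:main}, this gives $F(x_k) - F(x_{k+1}) \to 0$. I would then record the standard strong-convexity estimate
\begin{equation*}
  F(x) - F(x_*) \geq \tfrac{\theta_{\min}}{2}\|x-x_*\|^2,
\end{equation*}
which both bounds $\{x_k\}\subset\Lscr$ and, once $F(x_k)\to F(x_*)$ is in hand, will yield $x_k\to x_*$.

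The crucial step is ruling out that $\Sscr_\phi\s{ADD}$ is the only infinite index set. If both $\Sscr_\beta$ and $\Sscr_\phi\s{SD}$ were finite, then beyond some $k_1$ every iteration would lie in $\Sscr_\phi\s{ADD}$. As in the proof of Lemma~\ref{lem:SphiAdd-finite}, once $\Sscr_\beta$ is exhausted no new nonzero components can appear and each iteration in $\Sscr_\phi\s{ADD}$ sets at least one additional component to zero; after at most $n$ such iterations the support is empty, so $\phi(x_k) = 0$. If $\beta(x_k)$ were zero too, Case~(i) would apply, contradicting infiniteness of the sequence; otherwise $\|\beta(x_k)\| > 0 = \gamma\|\phi(x_k)\|$, so $k\in\Sscr_\beta$, contradicting $k>k_1$. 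Hence $\Tscr := \Sscr_\beta \cup \Sscr_\phi\s{SD}$ is infinite. This argument, not needed in the $\epsilon>0$ proof, is the main obstacle introduced by the removal of the termination test.

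Along $\Tscr$, the $\max$-form estimates in Lemmas~\ref{lem:suf-dec-beta} and~\ref{lem:dec-phi-SD} combined with $F(x_k)-F(x_{k+1})\to 0$ yield $\{\beta(x_k)\}_{k\in\Tscr}\to 0$ and $\{\phi(x_k)\}_{k\in\Tscr}\to 0$. By boundedness of $\{x_k\}$, pick a subsequence $\{x_{k_j}\}_{j}\subset\{x_k\}_{k\in\Tscr}$ converging to some $\bar x$; Lemma~\ref{lem:criticality} then identifies $\bar x$ as a solution to~\eqref{prob.x}, so $\bar x = x_*$ by uniqueness. Continuity of $F$ gives $F(x_{k_j})\to F(x_*)$, and monotonicity of $\{F(x_k)\}$ upgrades this to $F(x_k)\to F(x_*)$, whence the strong-convexity estimate above forces $x_k\to x_*$. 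A final appeal to Lemma~\ref{lem:criticality} (applied to the entire sequence) delivers $\beta(x_k)\to 0$ and $\phi(x_k)\to 0$, completing Case~(ii).
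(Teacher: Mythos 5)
Your proposal is correct and follows essentially the same route as the paper's proof: establish that $\Sscr := \Sscr_\beta \cup \Sscr_\phi\s{SD}$ is infinite, use the sufficient-decrease bounds of Lemmas~\ref{lem:suf-dec-beta} and~\ref{lem:dec-phi-SD} together with monotonicity and boundedness below of $\{F(x_k)\}$ to drive $\beta$ and $\phi$ to zero along $\Sscr$, identify the limit as $x_*$ via Lemma~\ref{lem:criticality} and uniqueness, and then upgrade to convergence of the full sequence using monotonic decrease of $F$ (the paper does this by contradiction, choosing $\delta>0$ so that the sublevel set $\{x : F(x) \leq F(x_*)+\delta\}$ lies in an $\varepsilon$-ball around $x_*$, which is the qualitative counterpart of your quantitative bound $F(x)-F(x_*) \geq \tfrac{\theta_{\min}}{2}\|x-x_*\|^2$). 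Your two elaborations---the explicit support-emptying argument showing a tail of iterations in $\Sscr_\phi\s{ADD}$ would force either termination or an iteration in $\Sscr_\beta$ (where the paper only remarks parenthetically that consecutive runs in $\Sscr_\phi\s{ADD}$ are bounded by $n$), and the explicit extraction of a convergent subsequence before invoking Lemma~\ref{lem:criticality} (a step the paper compresses into one sentence)---are sound fillings-in of details the paper leaves terse, not a different method.
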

\begin{proof}
If case $(i)$ occurs, then there is nothing left to prove.  Thus, for the remainder of the proof, we assume that case $(i)$ does not occur.  Since case $(i)$ does not occur, we know that Algorithm~\ref{alg:main.x} performs an infinite sequence of iterations.  Let us then define the set
$\Sscr := \Sscr_\beta \cup \Sscr_\phi\s{SD}$, which must be infinite (since any consecutive subsequence of iterations in $\Sscr_\phi\s{ADD}$ must be finite by the finiteness of $n$).  It follows from~\eqref{F-dec-beta} for $k\in\Sscr_\beta$, \eqref{F-dec-phiSD} for $k\in\Sscr_\phi\s{SD}$, and Assumption~\ref{ass:main} (specifically, the assumption that $f$ is bounded below over $\Lscr$) that
$$
\lim_{k\in\Sscr} \max\{ \|\beta(x_k)\|, \|\varphi(x_k)\| \} = 0.
$$
Combining this with Assumption~\ref{ass:main} and Lemma~\ref{lem:criticality} gives
\begin{equation} \label{lim-on-S}
\lim_{k\in\Sscr} x_k = x_*.
\end{equation}
Now, we claim that the previous limit holds over all iterations.  To prove this by contradiction, suppose that there exists an infinite $\Kscr\subseteq \Sscr_\varphi\s{ADD}$ and a scalar $\varepsilon > 0$ with
\begin{equation} \label{eq:x-away}
  \|x_k - x_*\| \geq \varepsilon \ \ \text{for all $k\in\Kscr$.}
\end{equation}
From Assumption~\ref{ass:main}, we conclude that there exists $\delta > 0$ such that
\begin{equation} \label{eq:in-ball}
\text{if} \ F(x) \leq F(x_*) + \delta, \  \text{then} \   \|x-x_*\| < \varepsilon.
\end{equation}
Moreover, from~\eqref{lim-on-S} and Assumption~\ref{ass:main}, there exists a smallest $k_S\in\Sscr$ such that
\begin{equation} \label{F-bound-1}
F(x_{k_S}) \leq F(x_*) + \delta.
\end{equation}
There then exists a smallest $k_K\in\Kscr$ such that $k_K > k_S$. Since, by construction, $\{F(x_k)\}_{k\geq 0}$ is monotonically decreasing , we may conclude with \eqref{F-bound-1} that
\begin{equation} \label{F-bound-2}
F(x_{k_K}) \leq F(x_{k_S})
\leq  F(x_*) + \delta.
\end{equation}
Combining~\eqref{F-bound-2} and~\eqref{eq:in-ball}, we deduce that $\|x_{k_K} - x_*\| < \epsilon$, which contradicts~\eqref{eq:x-away} since $k_K \in \Kscr$. This completes the proof.
\end{proof}

\section{Numerical Results}

In this section, we present results when employing an implementation of \algacro{} to solve a collection of $\ell_1$-norm regularized logistic regression problems.  Such problems routinely arise in the context of model prediction, making the design of advanced optimization algorithms that efficiently and reliably solve them paramount in big data applications.  We first describe the datasets considered in our experiments, then describe some details of our implementation (henceforth simply referred to as \algacro{}), and then present the results of our experiments.

\subsection{Datasets} \label{subsec:data}
We tested \algacro{} on $\ell_1$-norm regularized logistic regression problems using 31 datasets (see Table~\ref{tab:datasetinfo}), 19 of which are available only after standard scaling practices have been applied.  For the remaining 12 datasets, we considered both unscaled and scaled versions, where, for each, the scaling technique employed is described in the last column of Table~\ref{tab:datasetinfo}.  A checkmark in the ``Unscaled'' column indicates that we were able to obtain an unscaled version of that dataset.

Most of the datasets in Table~\ref{tab:datasetinfo} can be obtained from the LIBSVM repository.\footnote{\url{https://www.csie.ntu.edu.tw/~cjlin/libsvmtools/datasets/}}  From this repository, we excluded all regression and multiple-class (greater than two) instances, except for mnist since it is such a commonly used dataset.  Since mnist is for  digit classification, we transformed it for binary classification by assigning the digits $0$--$4$ to the label $-1$, and the digits $5$--$9$ to the label $1$. The remaining datasets were binary classification examples from which we removed HIGGS, kdd2010(algebra), kdd2010(bridge to algebra), epsilon, url, and webspam since insufficient computer memory was available.  (All experiments were conducted on a 64-bit machine with an Intel I7 4.0GHz CPU and 16GB of main memory.)  Finally, for the adult data (a1a--a9a) and webpage data (w1a--w8a) we only used the largest instances, namely problems a9a and w8a. This left us with our final subset of datasets from LIBSVM.

In addition, we also tested \algacro{} on three other datasets: synthetic, gene-ad, and pathway-ad.  The synthetic set is a randomly generated non-diagonally dominant dataset created by the authors of OBA.  The sets gene-ad and pathway-ad are datasets related to Alzheimer's Disease. They were obtained by preprocessing the sets GSE4226\!~\footnote{\url{http://www.ncbi.nlm.nih.gov/geo/query/acc.cgi?acc=GSE4226}} and GSE4227\!~\footnote{\url{http://www.ncbi.nlm.nih.gov/geo/query/acc.cgi?acc=GSE4227}} using the method presented in~\cite{zhu2015pathway}, and merging the results into the single dataset: gene-ad. The gene data (gene-ad) was converted to pathway data (pathway-ad) using the ideas described in~\cite{zhu2015pathway}. The union of these three datasets and those from the LIBSVM repository comprised our complete test set.

%
%
%


For the unscaled datasets (see column 4 in Table~\ref{tab:datasetinfo}), we adopted standard scaling techniques.  For problems scaled into $[-1,1]$ a simple linear scaling transformation was used.  For problem mnist, which was scaled into $[0,1)$, we used a common converting method in image processing.  Specifically, we defined
\begin{equation}\label{scale:mnist}
I(i,j)=\frac{P(i,j)}{2^b},
\end{equation}
where $P(i,j)$ is the given unscaled integer pixel value satisfying
$$
P(i,j)\in \{0,1,2,\cdots,2^b-1\},
$$
$b$ is the intensity resolution ($b=8$ for the mnist dataset), and $(i,j)$ range over the size of the image.  The scaled pixel values are then given by the values $I(i,j)\in[0,1)$.


\begin{table}[ht]
\center
\caption{Data sets.}
  \begin{tabular}{|c|c|c|c|c|}
  \hline
  Dataset  & \# of Samples & \# of Features &  Unscaled     & Scaling Used       \\ \hline
  fourclass     & 862               & 2                  &  $\checkmark$ & into [-1,1] \\
  svmguide1     & 3089              & 4                  & $\checkmark$  & into [-1,1] \\
  cod-rna       & 59535             & 8                  &               &             \\
  breast-cancer & 683               & 10                 &               &             \\
  australian    & 690               & 14                 &               &             \\
  SUSY          & 5000000           & 18                 &  $\checkmark$ & into [-1,1] \\
  splice        & 1000              & 60                 &               &             \\
  heart         & 270               & 13                 &               &             \\
  german.numer  & 1000              & 24                 &  $\checkmark$ & into [-1,1] \\
  diabetes      & 768               & 8                  &  $\checkmark$ & into [-1,1] \\
  liver-disorders& 345              & 6                  & $\checkmark$  & into [-1,1] \\
  w8a           & 49749             & 300                &               &             \\
  madelon       & 2000              & 500                &  $\checkmark$ & into [-1,1] \\
  a9a           & 32561             & 123                &               &             \\
  mnist         & 30001             & 784                &  $\checkmark$ & into [0,1)  \\
  skin-nonskin  & 245057            & 3                  &  $\checkmark$ & into [-1,1] \\
  sonar         & 208               & 60                 &               &             \\
  ijcnn1        & 49990             & 22                 &               &             \\
  svmguide3     & 1243              & 22                 &               &             \\
  synthetic     & 5000              & 5000               &               &             \\
  gisette       & 6000              & 5000               &               &             \\
  pathway-ad    & 278               & 71                 &               &             \\
  real-sim      & 72309             & 20958              &               &             \\
  covtype.binary& 581012            & 8                  &               &             \\
  mushrooms     & 8124              & 112                &               &             \\
  rcv1.binary   & 20242             & 47236              &               &             \\
  leukemia      & 34                & 7129               &               &             \\
  duke-breast-cancer& 38            & 7129               &  $\checkmark$ & into [-1,1] \\
  gene-ad       & 71                & 17375              &  $\checkmark$ & into [-1,1] \\
  colon-cancer  & 62                & 2000               &  $\checkmark$ & into [-1,1] \\
  news20        & 19996             & 1355191            &               &             \\
  \hline
\end{tabular}
\label{tab:datasetinfo}
\end{table}

\subsection{Implementation details}
We developed a preliminary \Matlab{} implementation of \algacro{} that we are happy to provide upon request.  In this section, we describe the algorithm-specific choices made to obtain the results that we present.

First, the weighting parameter in~\eqref{prob.x} was defined as
$$
\lambda = \frac{1}{\text{\# of Samples}}.
$$
For determining the iteration type, we chose $\gamma = 1$ in line~\ref{line:main.if} of Algorithm~\ref{alg:main.x} so that no preference was given to iterations being in either $\Sscr_\phi$ or $\Sscr_\beta$.

For any $k\in\Sscr_\phi$, we made the simple choice of $\Iscr_k = \{i:[\phi(x_k)]_i \neq 0\}$.  This made the inequality in line~\ref{line:Ik-phi} satisfied for any $\eta_\phi \in (0,1]$, making the choice of this parameter irrelevant.  (In a more sophisticated implementation, one might consider other choices of $\Iscr_k$, say to adaptively control $|\Iscr_k|$, to improve efficiency.)  With this choice for $\Iscr_k$ made, Algorithm~\ref{alg:main.x} allows for great flexibility in obtaining a search direction that satisfies the conditions in line~\ref{line:dbar} (see (iii) in Section~\ref{subsec.literature} for additional comments). For our tests, we applied the linear-CG method to the system $H_k d = -g_k$ defined by the terms constructed in line~\ref{line:gH}, except that we added a diagonal matrix with entires $10^{-8}$ to $H_k$ (an approach also adopted by OBA and LIBLINEAR).  As discussed in Section~\ref{sec.algorithm}, the conditions that are required to be satisfied by the trial step will hold if CG is terminated during any iteration. To help limit the number of backtracking steps required by the subsequent backtracking line search, we terminated CG as soon as one of three conditions was satisfied.  To describe these conditions, we let $d_j$ denote the $j$th CG iteration, $r_j = \norm{H_k d_j + g_k}$ denote the $j$th CG residual, and $v_j$ denote the number of components in $x_k+d_j$ that fall into a different orthant than $x_k$. With these definitions, we terminated CG as soon as one of the following was satisfied:
\begin{align*}
r_j &\leq \max\{10^{-1}r_0, 10^{-12}\}, \\
v_j &\geq \max\{ 10^3, 10^{-1}|\Iscr_k| \}, \sgap \text{or} \\
\norm{d_j} &\geq \delta_{k,\phi} := \max\{10^{-3}, \min\{10^3,10 \norm{x_{k_\phi(k)+1}-x_{k_\phi(k)}}\} \},
\end{align*}
where $k_\phi(k) := \max\{\kbar : \kbar\in\Sscr_\phi \ \text{and} \ \kbar < k\}$. This first condition is a standard requirement of asking the residual to be reduced by a fraction of the initial residual.  We used the second condition to trigger termination when a CG iterate predicted that ``too many'' of the variables at $x_k+d_j$ are in the ``wrong'' orthant.  Finally, the third condition ensured that the size of the trial step was moderate, thus functioning as an implicit trust-region constraint; this condition was motivated by the well-known fact that CG iterations $\{d_j\}$ are monotonically increasing in norm.

When $k\in\Sscr_\beta$, we again made the simples choice of $\Iscr_k = \{i:[\beta(x_k)]_i \neq 0\}$, making the choice of $\eta_\beta \in (0,1]$ irrelevant in our tests (though adaptive choices of $\Iscr_k$ might be worthwhile in a more sophisticated implementation).  Since there is no natural scaling for the direction $\beta(x_k)$ because it is based on first derivative information only, it is important from a practical perspective to adaptively scale the direction.  Therefore, in line~\ref{line:d-beta}, we used the alternative safeguarded direction defined by
\begin{equation}
[d_k]_{\Iscr_k} = -\delta_{k,\beta}\frac{[\beta(x_k)]_{\Iscr_k}}{\norm{[\beta(x_k)]_{\Iscr_k}}},
\end{equation}
where
$$
\delta_{k,\beta} := \max\{10^{-5}, \min\{1,\norm{x_{k_\beta(k)+1}-x_{k_\beta(k)}}\} \}
$$
with $k_\beta(k) := \max\{\kbar : \kbar\in\Sscr_\beta \ \text{and} \ \kbar < k\}$. Since this is a safeguarded scaling of the $d_k$ defined in line~\ref{line:d-beta},  it is fully covered by the theory that we developed in Section~\ref{sec:convergence}.

During each iteration, the values $\eta = 10^{-2}$ and $\xi = 0.5$ were used during the line search regardless of whether it was the line search performed by Algorithm~\ref{alg:ls-phi} when called by Algorithm~\ref{alg:main.x} (line~\ref{line:ls-phi}) or if it was the line search performed by Algorithm~\ref{alg:ls-beta} when called by Algorithm~\ref{alg:main.x} (line~\ref{line:ls-beta}). The starting point $x_0$ was chosen as the zero vector for all problems, and the  termination tolerance, maximum allowed number of iterations, and maximum allowed time limit values were chosen to be $\epsilon = 10^{-6}$,  $1000$, and $10$ minutes, respectively.

\begin{table}[ht]
\center
\caption{CPU time and sparsity for \algacro{} and OBA on scaled problem variants.}
  \begin{tabular}{|c|ccc|cc|} \hline
\multicolumn{1}{|c|}{} &
\multicolumn{3}{c|}{\rm Time (seconds)} &
\multicolumn{2}{c|}{\rm \% of zeros} \\ \hline
  Problems & \algacro & OBA & OBA/\algacro & \algacro & OBA\\
  \hline
  fourclass     &  \textcolor{red}{0.00326} & 0.00705                   & 2.1626 & 0             & 0              \\
  svmguide1     &  \textcolor{red}{0.0384}  & 0.06457                   & 1.6815 & 0             & 0              \\
  cod-rna       &  0.48762                  & \textcolor{red}{0.18618}  & 0.3818 & 0             & 0              \\
  breast-cancer & \textcolor{red}{0.0089}   & 0.03769                   & 1.9674 & 0             & 0             \\
  australian    & \textcolor{red}{0.01443}  & 0.0174                    & 1.2058 & 0             & 0              \\
  SUSY          & 241.2437                  & \textcolor{red}{205.1242} & 0.8502 & 0             & 0              \\
  splice        & \textcolor{red}{0.0101}   & 0.01982                   & 1.9624 & 5             & 5              \\
  heart         & \textcolor{red}{0.00706}       & 0.01357                   & 1.9221 & 7.7           & 7.7             \\
  german.numer  & \textcolor{red}{0.01159}  & 0.02111                   & 1.8214 & 8.3           & 8.3           \\
  diabetes      & \textcolor{red}{0.00581}  & 0.00979                   & 1.6850 & 12.5          & 12.5          \\
  liver-disorders& \textcolor{red}{0.01254}                  & max iter                  & Inf    & 16.7          & ---             \\
  w8a           & \textcolor{red}{0.97079}  & 0.99154                   & 1.0214 & \textcolor{red}{19.1} & 18.7  \\
  madelon       & \textcolor{red}{0.26604}  & 0.37497                   & 1.4094 & 19.8          & 19.8              \\
  a9a           & \textcolor{red}{0.78203}  & 3.26994                   & 4.1813 & \textcolor{red}{22.0} & 20.3  \\
  mnist         & \textcolor{red}{18.78034} & 54.432                    & 3.0545 & 37.7          & \textcolor{red}{37.8} \\
  skin-nonskin  & \textcolor{red}{3.20594}  & ascent                    & Inf & 41.7          & ---          \\
  sonar         & \textcolor{red}{0.02012}  & 0.02938                   & 1.4602 & 41.7          & 41.7              \\
  ijcnn1        & \textcolor{red}{0.06153}  & 0.08178                   & 1.3291 & 45.5          & 45.5          \\
  svmguide3      & \textcolor{red}{0.01856}  & 0.03478                   & 1.8739 & 45.5          & 45.5            \\
  synthetic     & 45.82688                  & \textcolor{red}{20.42464} & 0.4457 & \textcolor{red}{57.4} & 49.5          \\
  gisette       & \textcolor{red}{13.30533} & 28.22136                  & 2.1211 & 84.6          & 84.6          \\
  pathway-ad    & \textcolor{red}{0.16054}  & 1.30585                   & 8.1341 & 87.4          & 87.4          \\
  real-sim      & \textcolor{red}{2.3221}   & 2.43764                   & 1.0214 & \textcolor{red}{91.9} & 91.8          \\
  covtype.binary& \textcolor{red}{1.49449}  & 5.95536                   & 3.9849 & \textcolor{red}{96.3}          & 90.7               \\
  mushrooms     & \textcolor{red}{0.03089}  & 0.05815                   & 1.8825 & 97.3          & 97.3              \\
  rcv1.binary   & \textcolor{red}{0.39186}  & 0.80563                   & 1.7427 & 98.8          & 98.8          \\
  leukemia      & \textcolor{red}{0.09151}  & 0.12086                   & 1.3207 & 99.7          & 99.7          \\
  duke-breast-cancer& \textcolor{red}{0.06227}& 0.10628                 & 1.7068 & 99.7          & 99.7              \\
  gene-ad       & 0.21525                   & \textcolor{red}{0.15943}  & 0.7407 & 99.8          & 99.8          \\
  colon-cancer  & 0.04069                   & \textcolor{red}{0.03905}  & 0.9597 & 99.9          & 99.9              \\
  news20        & \textcolor{red}{6.09086}  & 19.77945                  & 3.2474 & 99.9          & 99.9          \\
  \hline
\end{tabular}
\label{tab:runtimecmpscaled}
\end{table}

\begin{table}[ht]
\center
\caption{CPU time and sparsity for \algacro{} and OBA on unscaled problem variants.}
  \begin{tabular}{|c|ccc|cc|} \hline
\multicolumn{1}{|c|}{} &
\multicolumn{3}{c|}{\rm Time (seconds)} &
\multicolumn{2}{c|}{\rm \% of zeros} \\ \hline
 Problems & \algacro & OBA & OBA/\algacro & \algacro & OBA \\
  \hline
  fourclass    & \textcolor{red}{0.00486}  & 0.00775             & 1.5946 & 0    & 0          \\
  diabetes     & \textcolor{red}{0.01964}  & 0.02159             & 1.0993 & 0    & 0          \\
  german.numer & \textcolor{red}{0.03168}  & 0.0564              & 1.7803 & 0    & 0          \\
  skin-nonskin & \textcolor{red}{0.11378}  & \text{ascent} & Inf    & 0 & --- \\
  madelon      & \textcolor{red}{6.55674}  & 41.9519             & 6.3983 & 8    & 8          \\
  liver-disorders& \textcolor{red}{0.00571} & 0.03277            & 5.7391 & 83.3 & 83.3       \\
  colon-cancer & 0.08429                   & \textcolor{red}{0.05364}& 0.6364 & 98.7 & 98.7       \\
  duke-breast-cancer& \textcolor{red}{0.08936} & 0.12487            & 1.3973 & 99.7 & 99.7       \\
  gene-ad      & \textcolor{red}{4.62839}  & \text{ascent} & Inf    & 99.8 & --- \\
  svmguide1    & max iter   & max iter      & ---    & ---  &  --- \\
  mnist        & max time   & \text{ascent} & ---    & --- & --- \\
  SUSY         & max iter   & max iter      & ---   & --- & --- \\
  \hline
\end{tabular}
\label{tab:runtimecmpunscaled}
\end{table}

\subsection{Test results}
The output from \algacro{} for the problems corresponding to the scaled and unscaled datasets in our experiments are summarized in Tables~\ref{tab:runtimecmpscaled} and \ref{tab:runtimecmpunscaled}, respectively. These tables focus on the computational time in seconds and percentage of zeros (sparsity) in the computed solutions.  For comparison purposes, we also provide the output from the OBA solver whose \Matlab{} implementation was graciously provided by the authors.  For a fair comparison, we used the same stopping tolerance value of $\epsilon = 10^{-6}$ for OBA and made no modifications to their code.  The numbers reported for each problem (named according to the corresponding dataset) are the averages from running each problem instance $10$ times.  We do not provide the final objective values since they were the same for \algacro{} and OBA on all problems that were successfully solved by both algorithms.  We use red numbering to indicate that an average CPU time was relatively lower for an algorithm, or if the average percentage of zeros in the solution was relatively larger for an algorithm.

We can observe from Table~\ref{tab:runtimecmpscaled} that \algacro{} performed better than OBA on 26 of the 31 ($83.87\%$) scaled test problems. OBA is faster than \algacro{} only on problems cod-rna, SUSY, synthetic, gene-ad, and colon-cancer. However, \algacro{} is between $3$ and $8$ times faster than OBA on problems a9a, mnist, pathway-ad, covtype.binary, and news20, and between $1$ and $3$ times faster than OBA on the remaining $21$ problems. In terms of sparsity, the two algorithms are comparable.   Although not presented in the table, we find it interesting to note that \algacro{} required an average of $34.12$ iterations to solve the problems, with, on average, $29.93$ of them being in $\Sscr_\phi$. This indicates that \algacro{} quickly identifies the orthant that contains the optimal solution.

By turning our attention to Table~\ref{tab:runtimecmpunscaled}, we see that the performance of both \algacro{} and OBA deteriorates when the problems are unscaled. Moreover, OBA fails on problems skin-nonskin, gene-ad, and mnist because it generates iterates that increase the objective function; we denote these failures as ``ascent" in the table.  In theory, ascent is only possible for their method when their fixed estimate ($10^{8}$ in their code) of the Lipschitz constant for the gradient of $f$ is not large enough.  Although simple adaptive strategies could be used to avoid such issues, we made no such attempts because we did not want to make any edits to their code.  Overall, \algacro{} was able to solve 9 of the 12 unscaled problems, and OBA only performed better than \algacro{} on a single test problem (colon-cancer).

\begin{figure}[ht]
\centering
\includegraphics[width=1.2in]{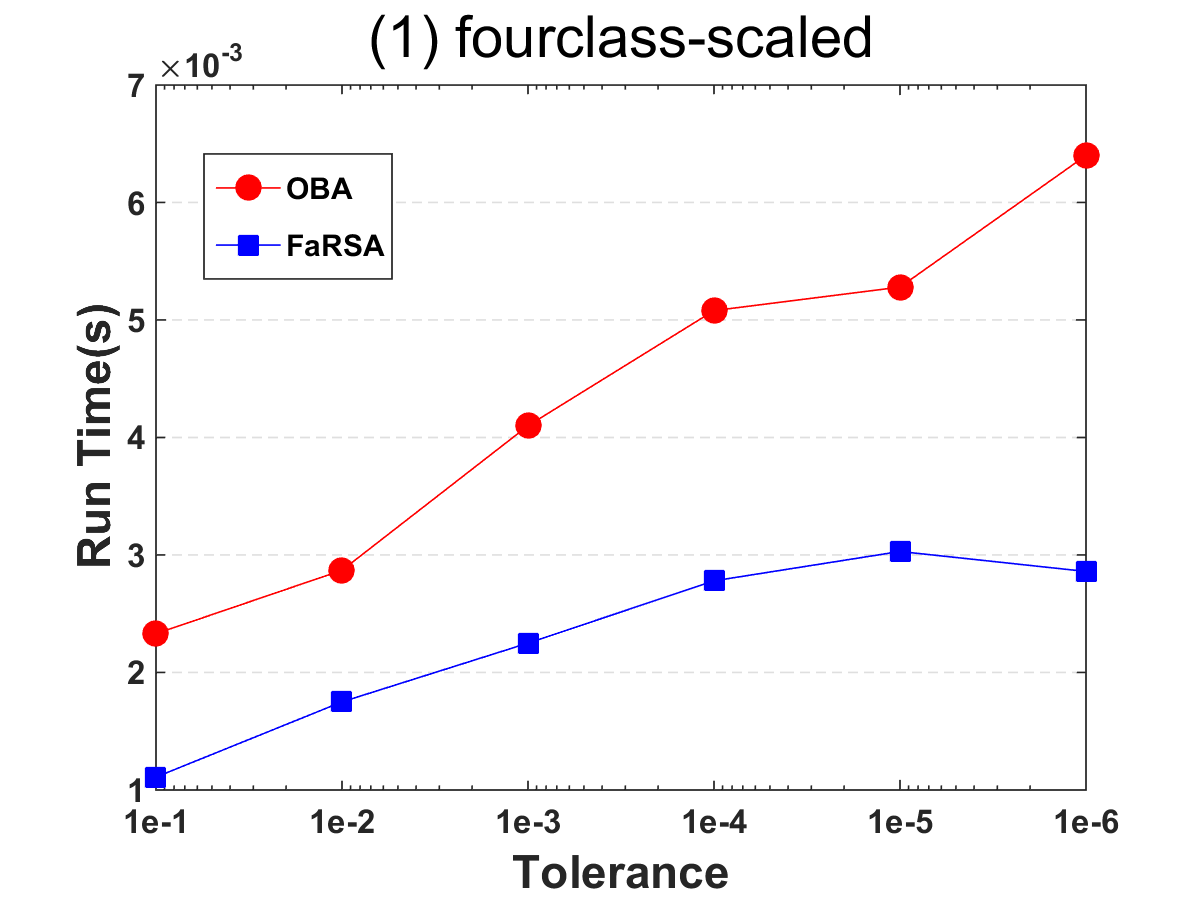}
\includegraphics[width=1.2in]{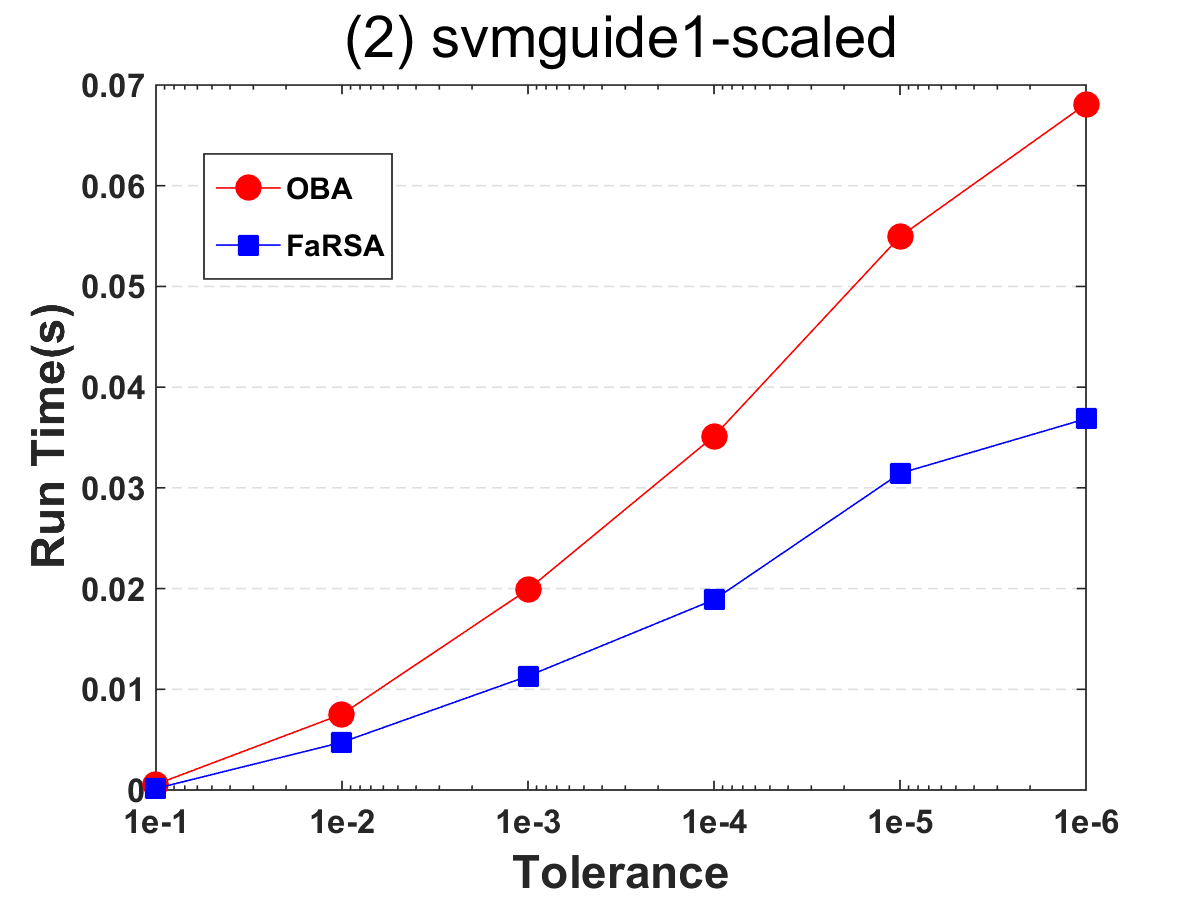}
\includegraphics[width=1.2in]{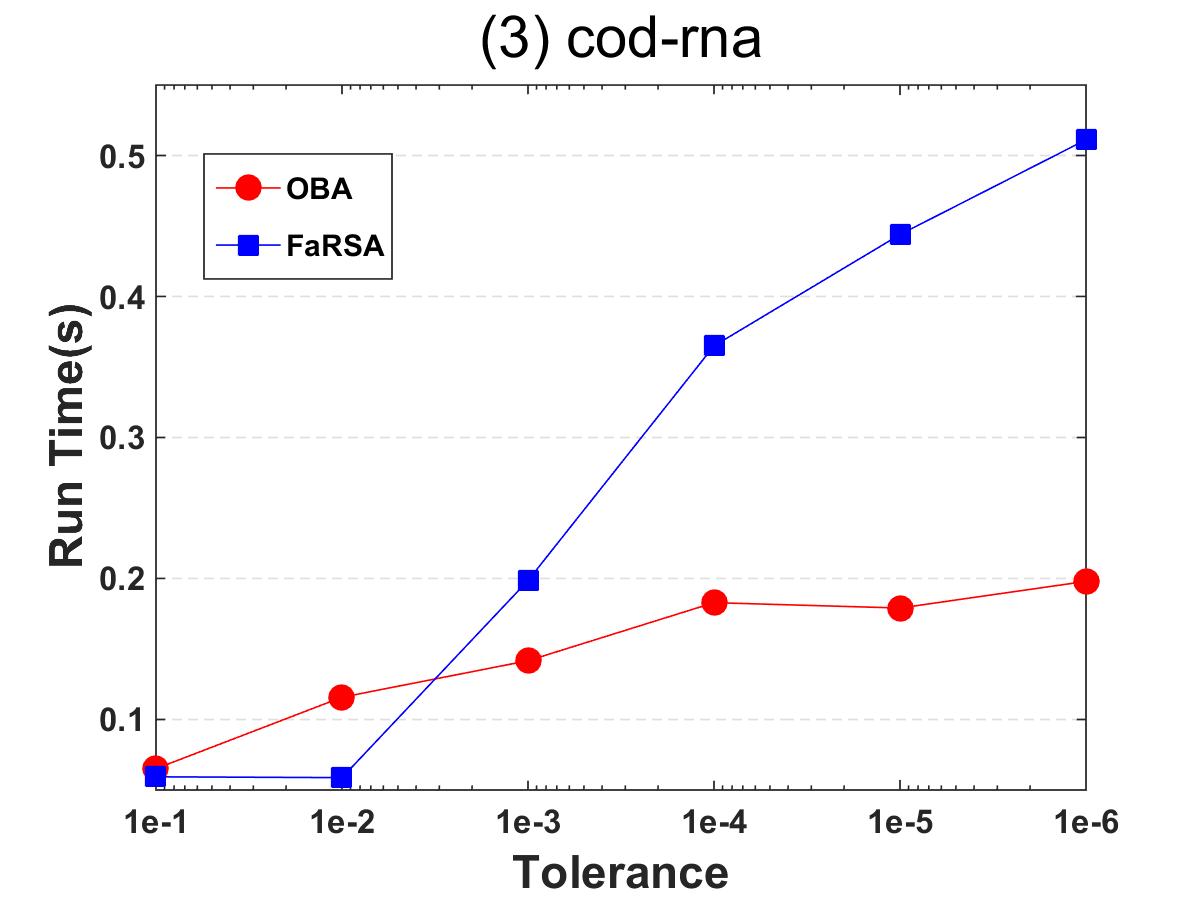}
\includegraphics[width=1.2in]{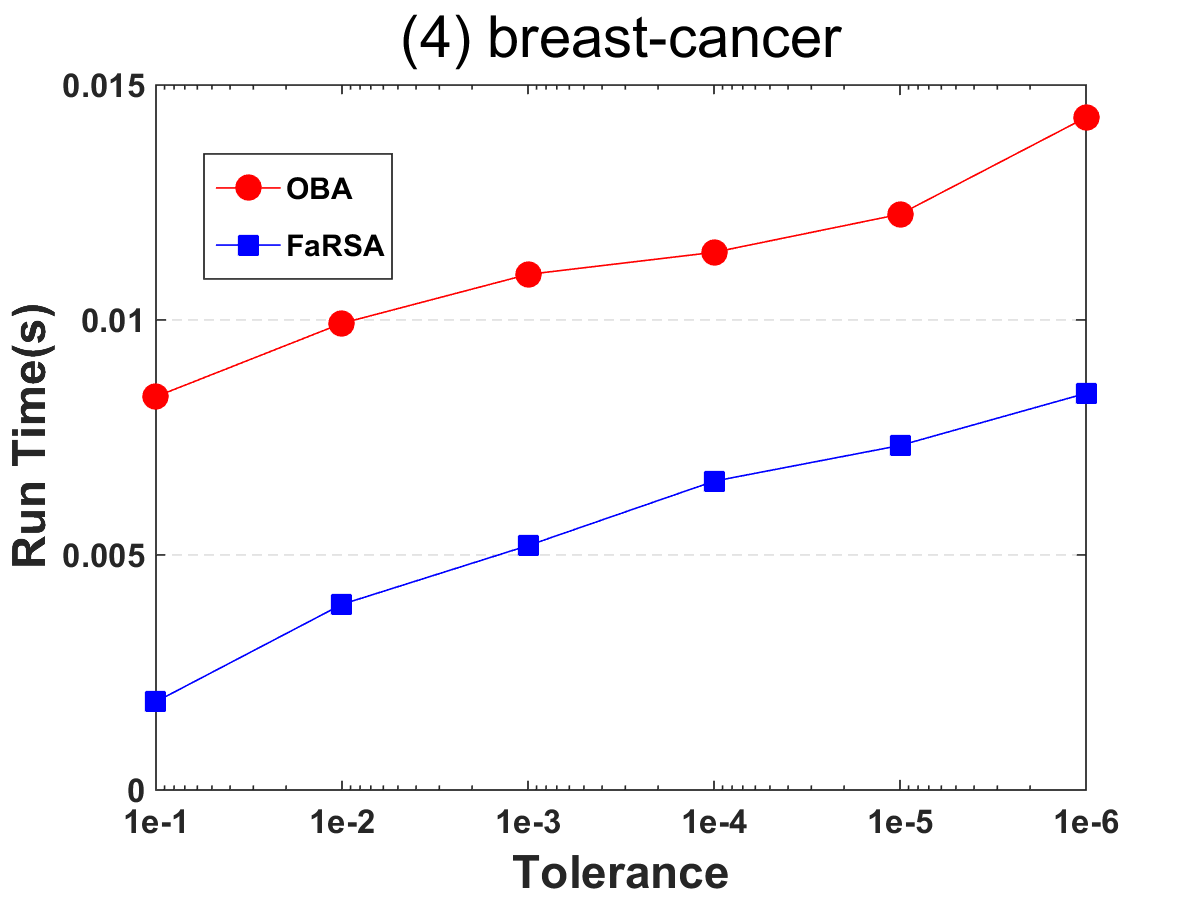}
\includegraphics[width=1.2in]{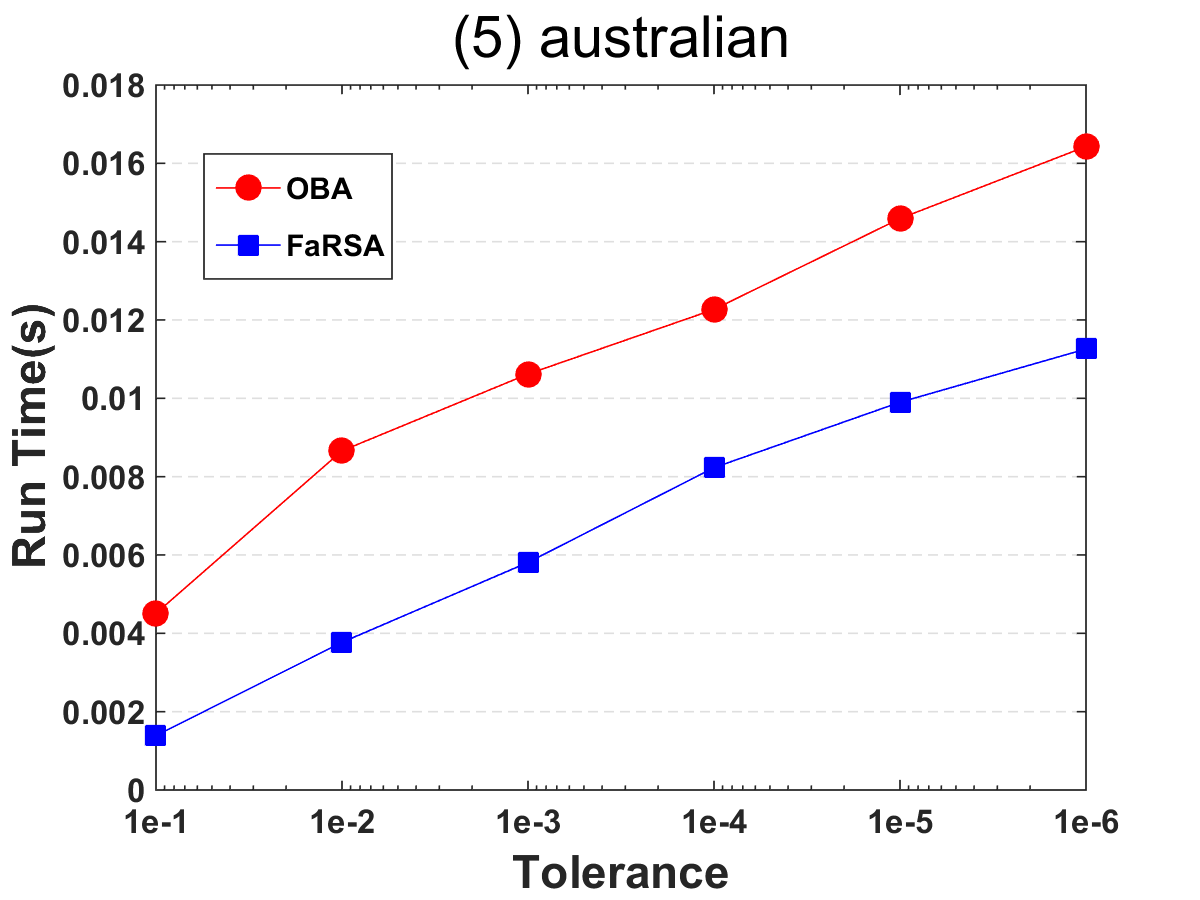}
\includegraphics[width=1.2in]{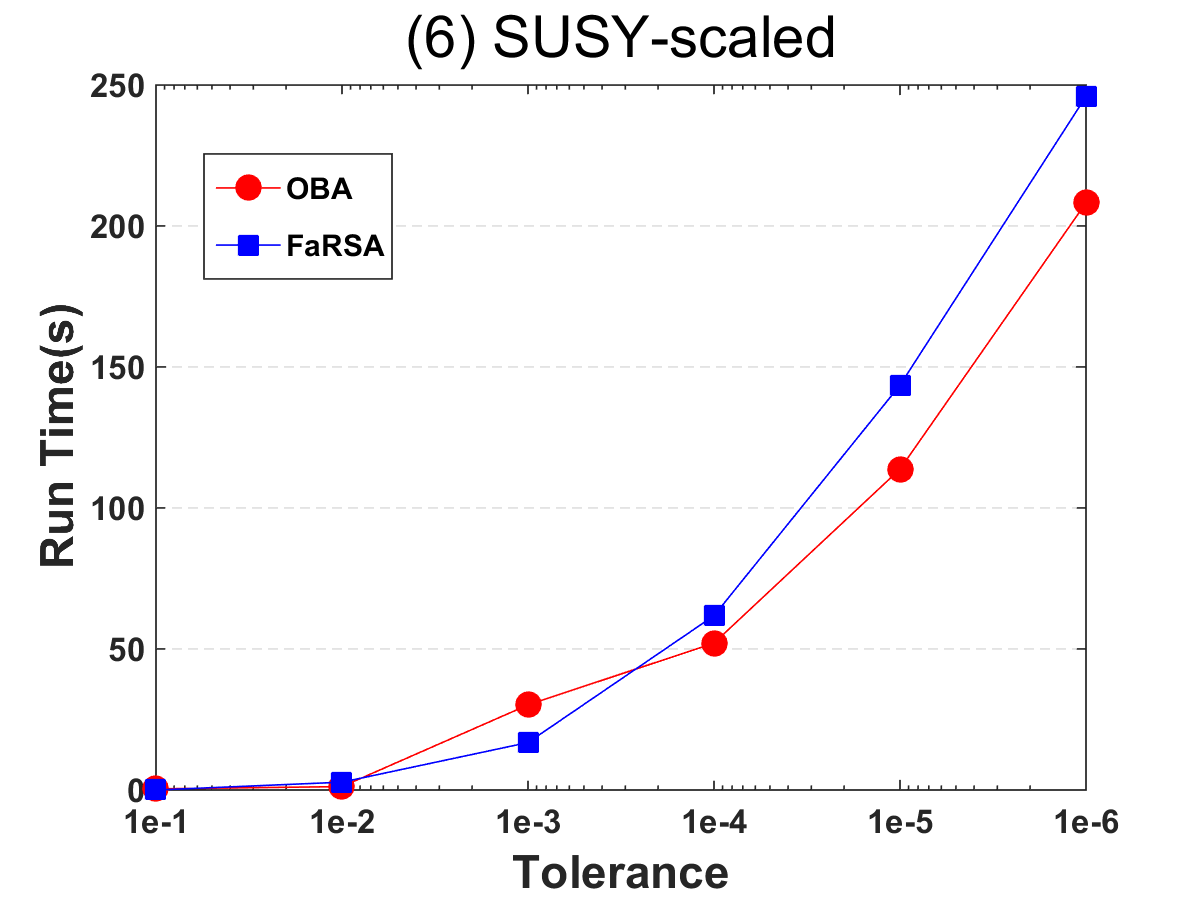}
\includegraphics[width=1.2in]{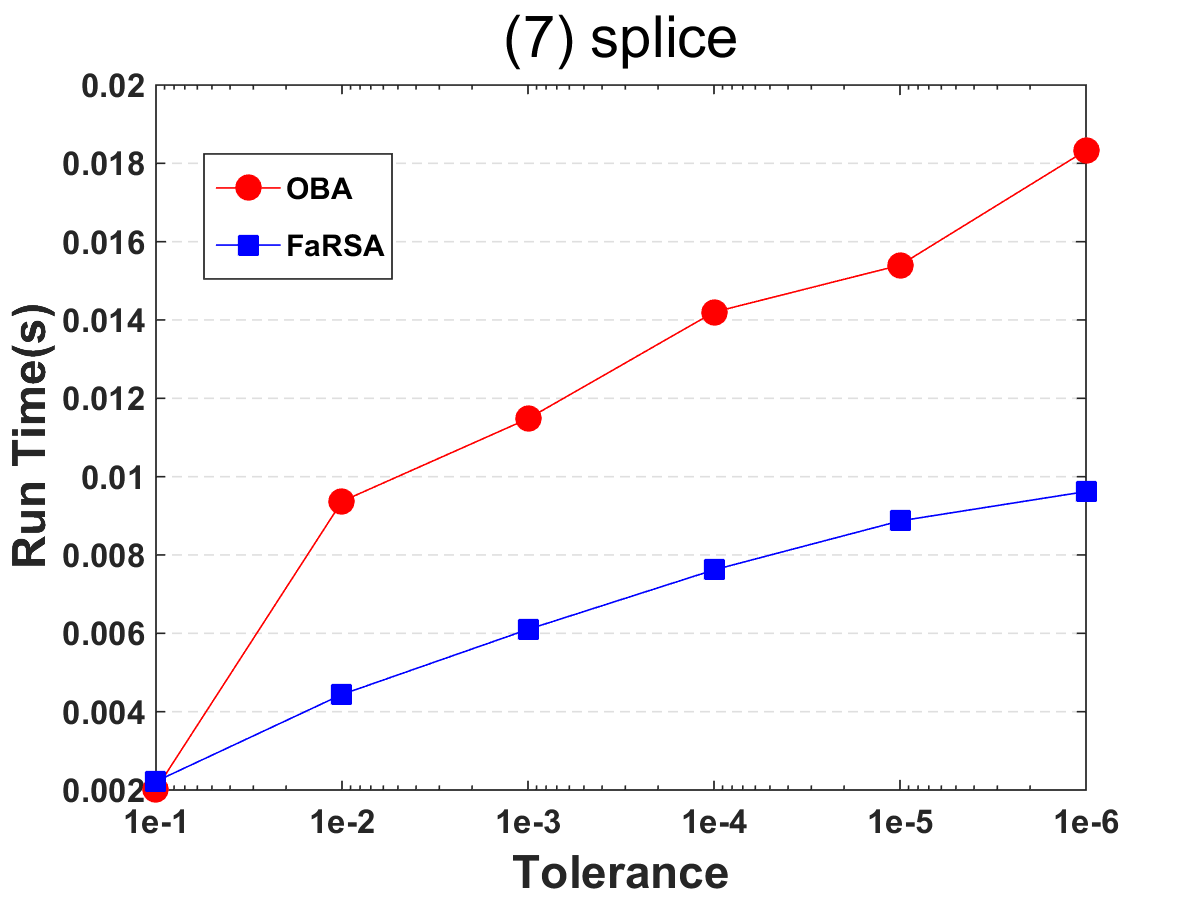}
\includegraphics[width=1.2in]{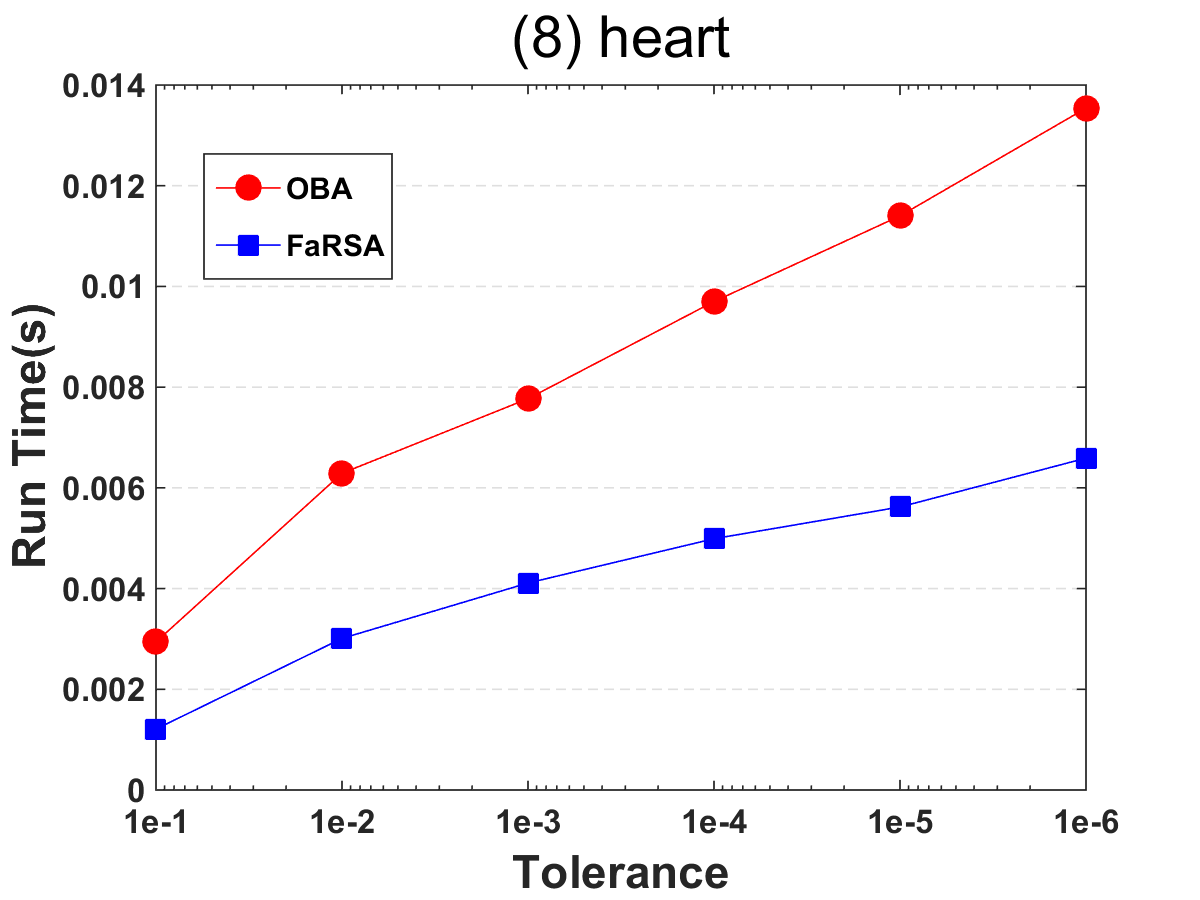}
\includegraphics[width=1.2in]{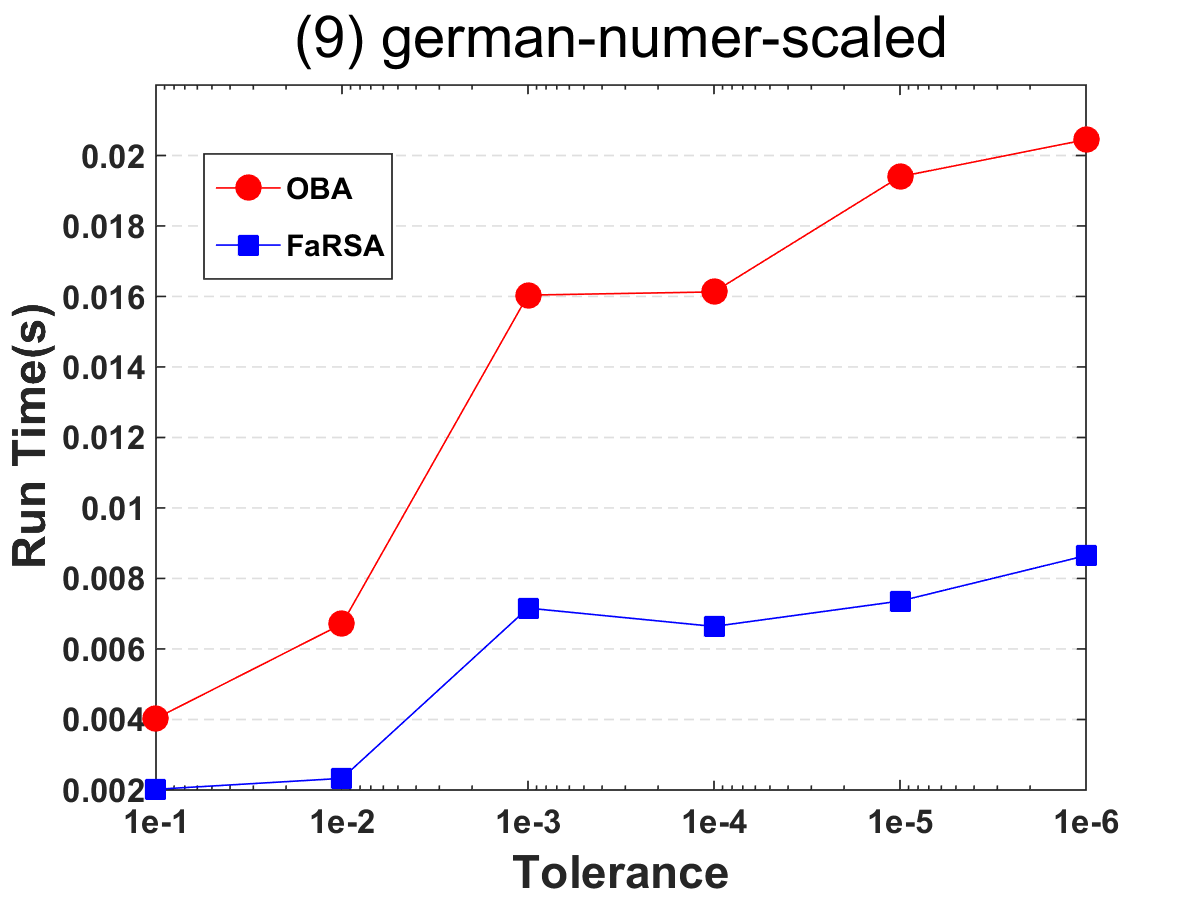}
\includegraphics[width=1.2in]{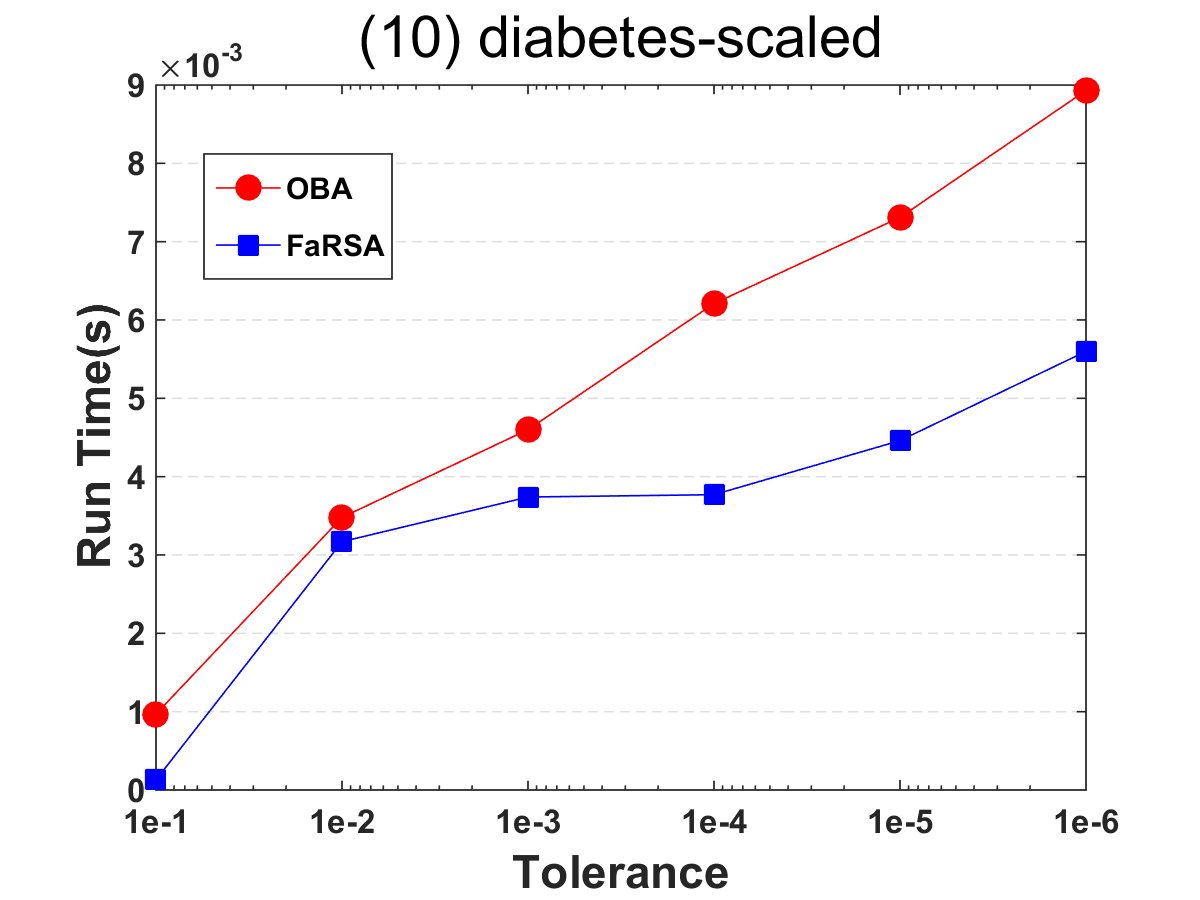}
\includegraphics[width=1.2in]{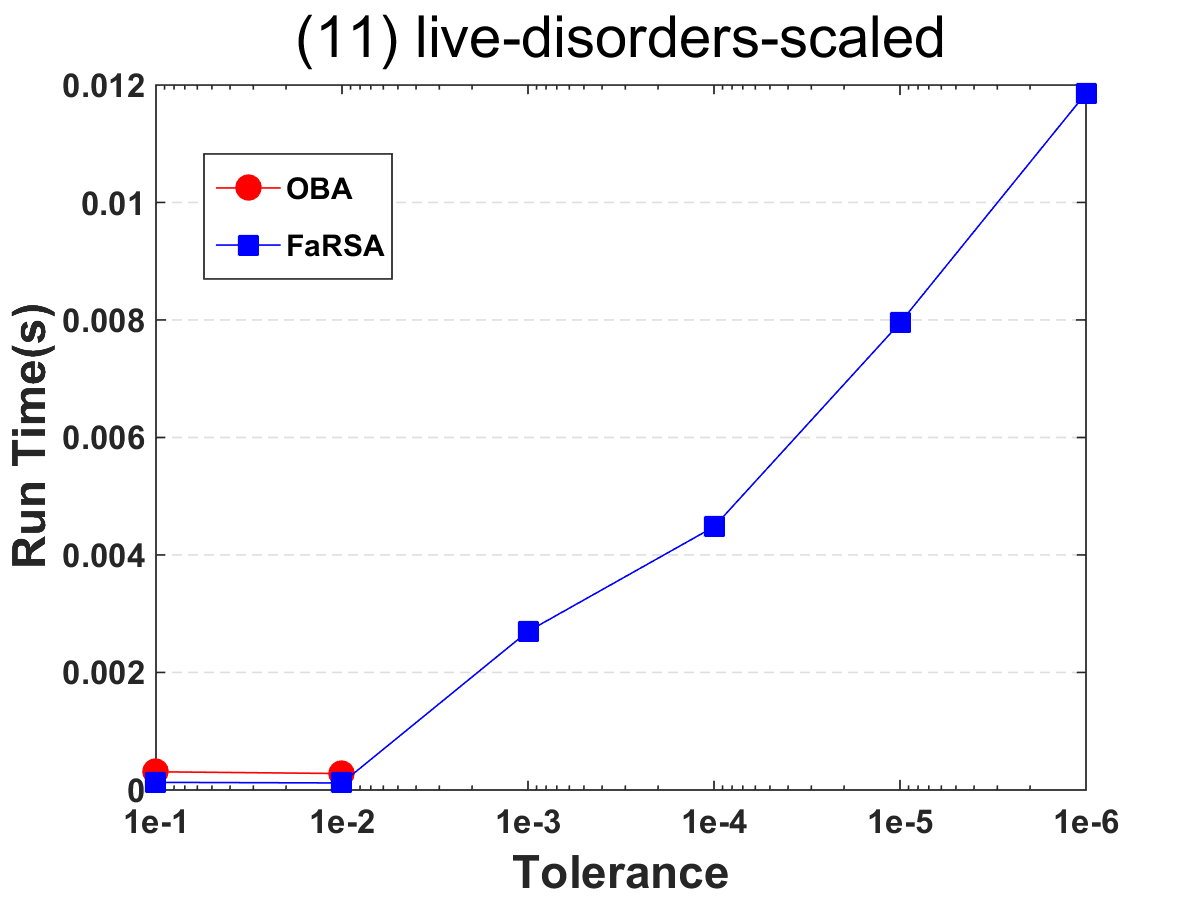}
\includegraphics[width=1.2in]{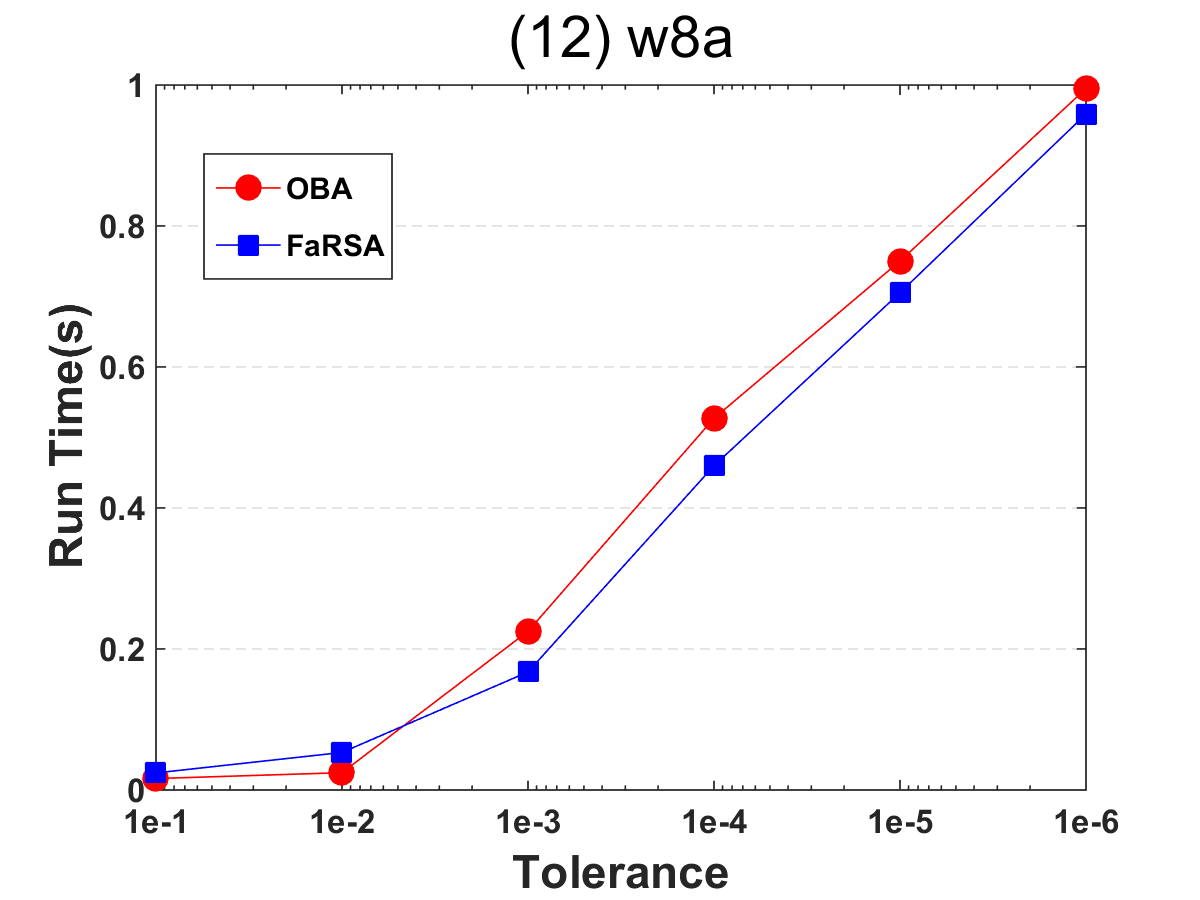}
\includegraphics[width=1.2in]{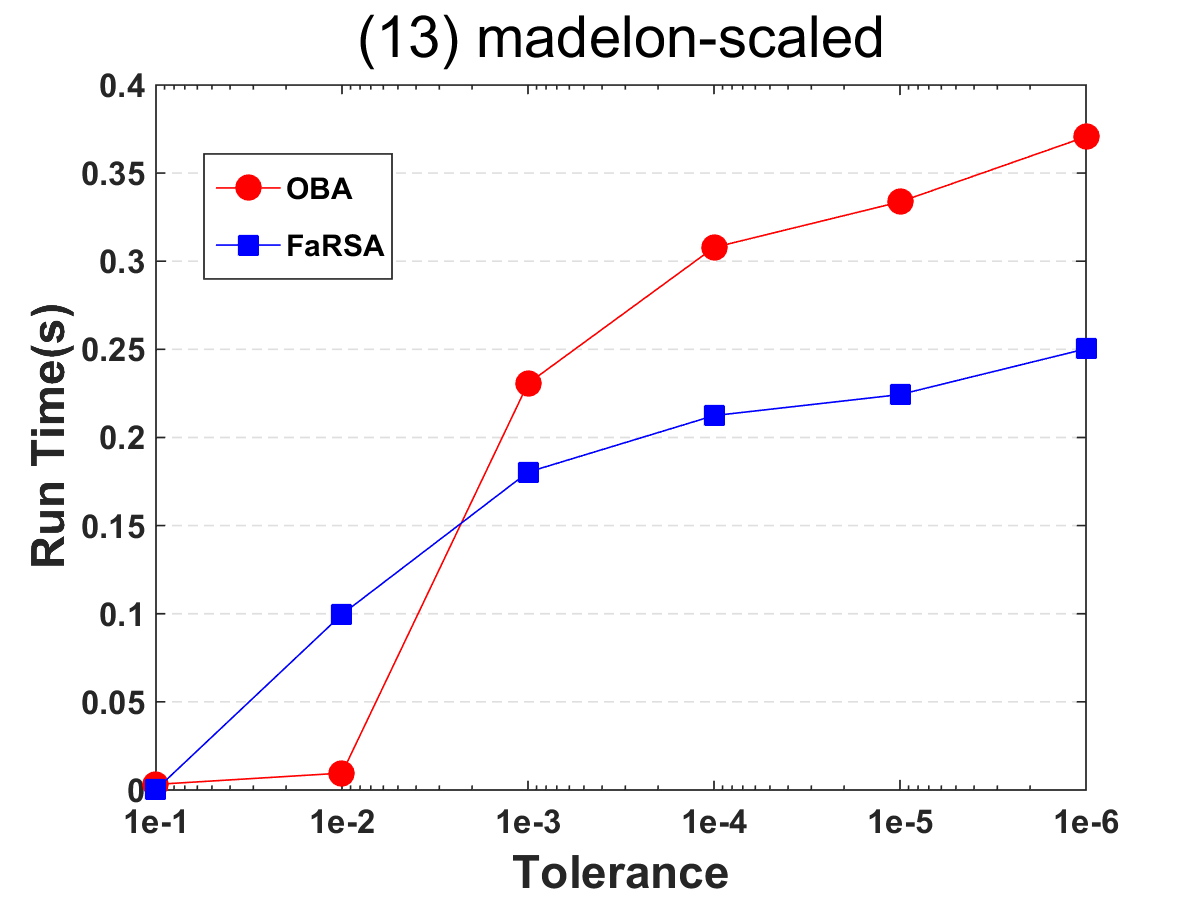}
\includegraphics[width=1.2in]{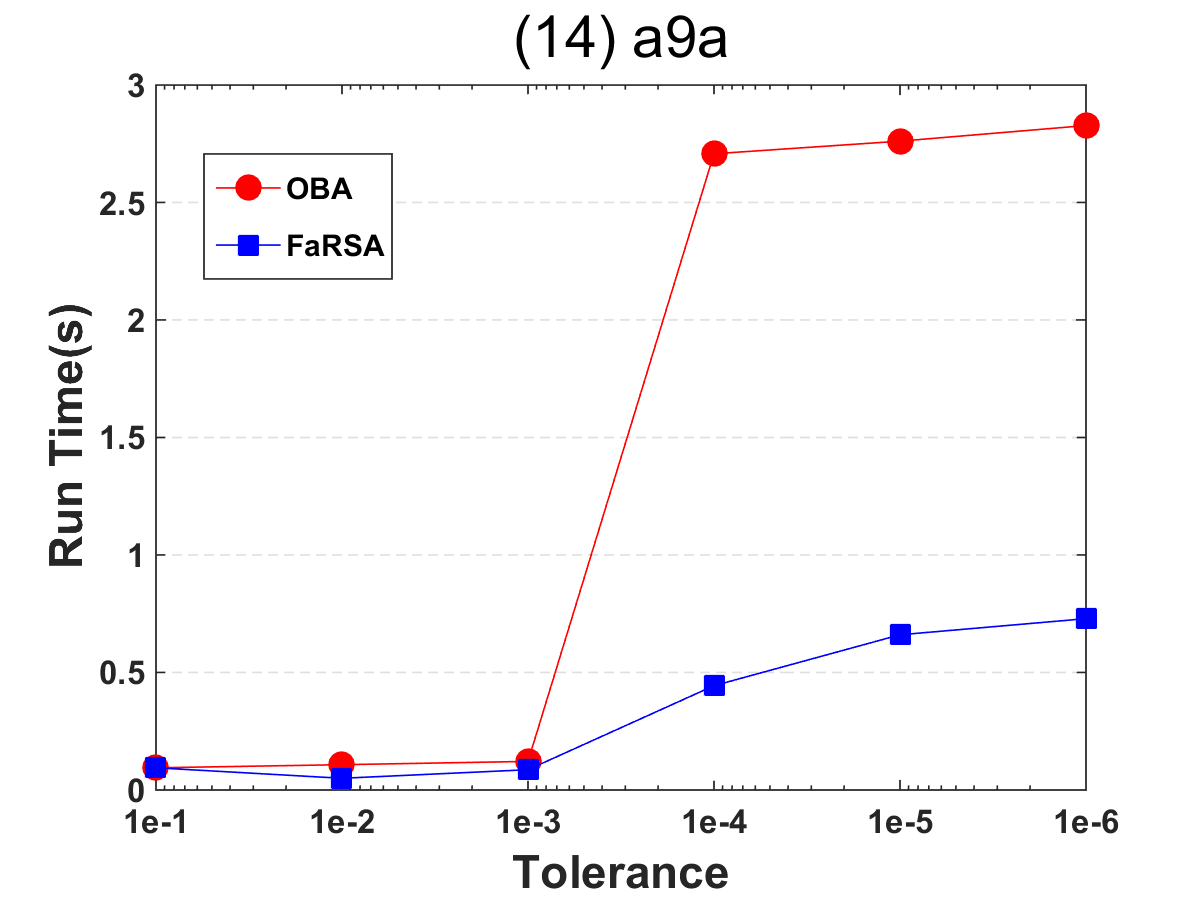}
\includegraphics[width=1.2in]{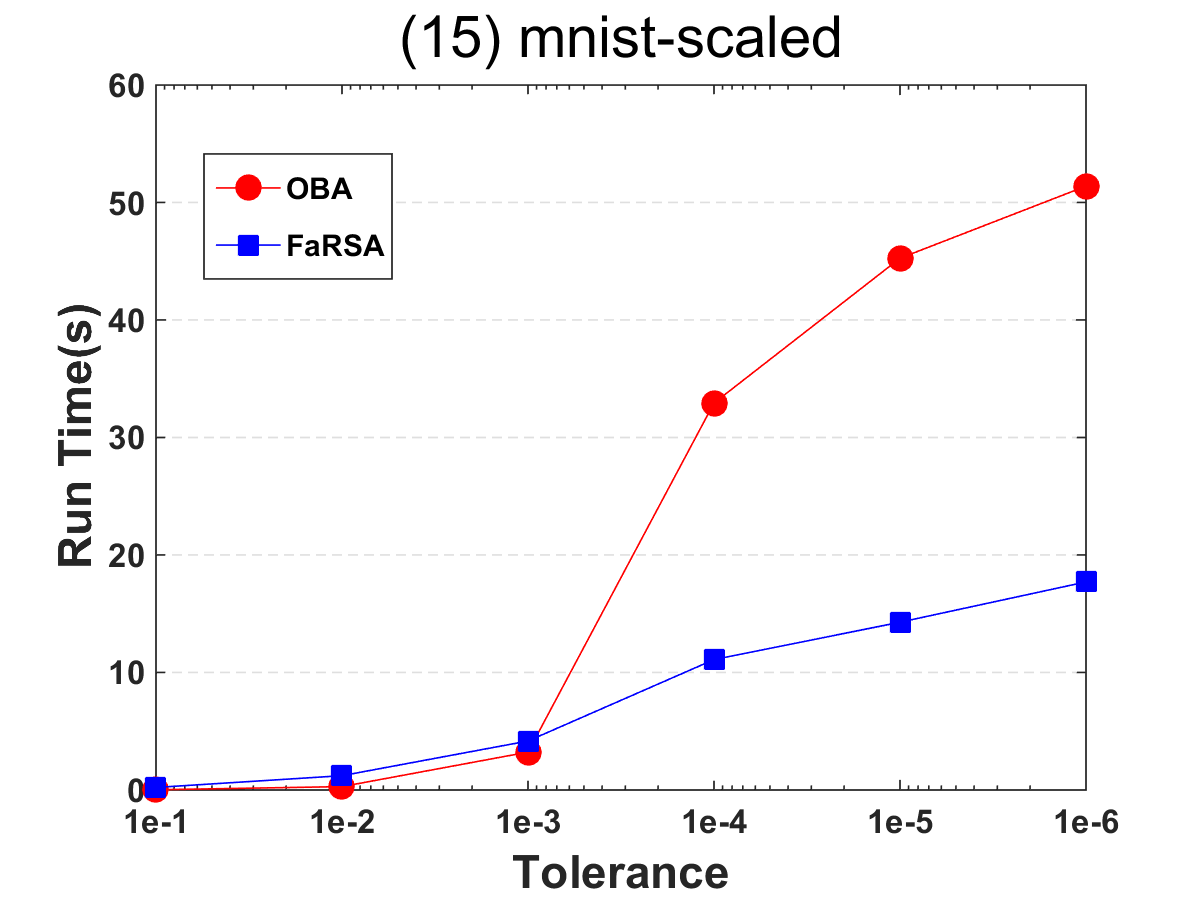}
\includegraphics[width=1.2in]{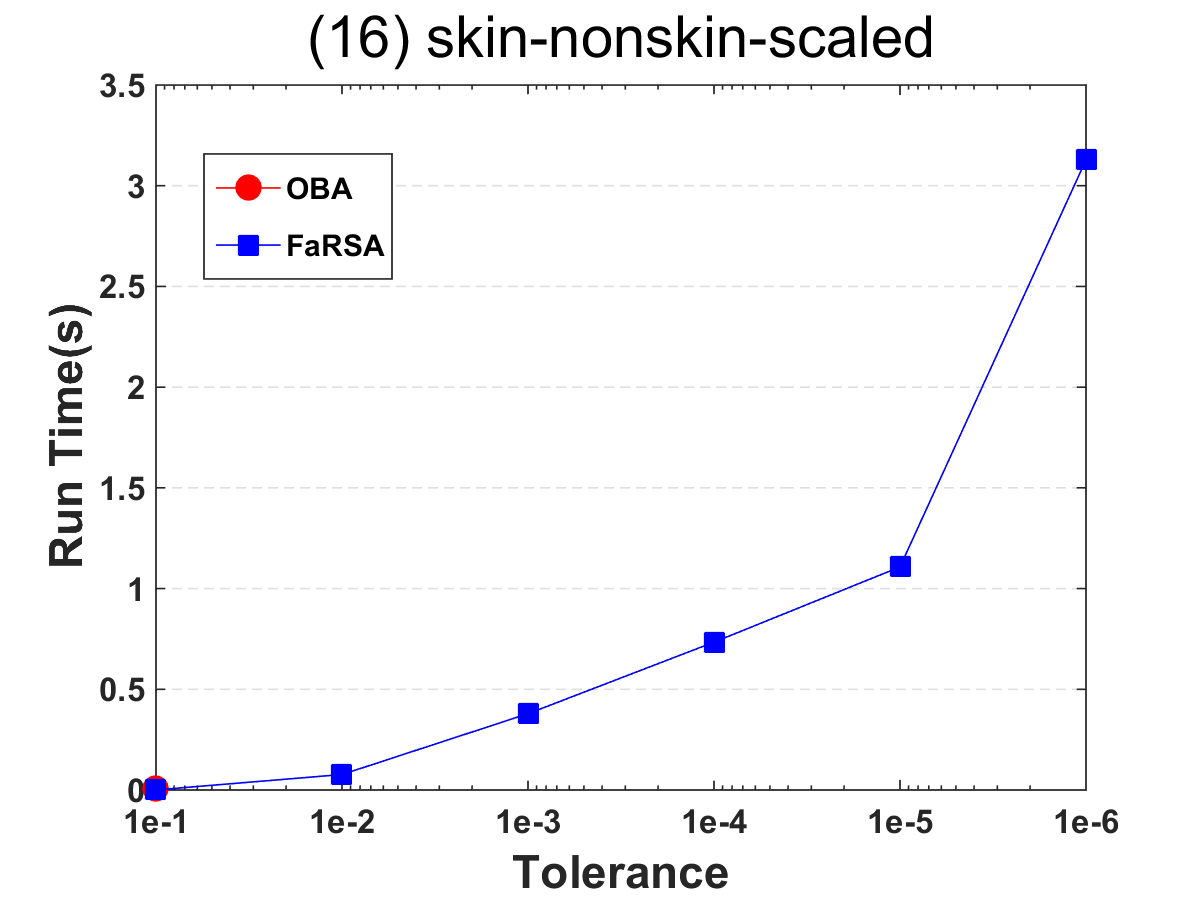}
\includegraphics[width=1.2in]{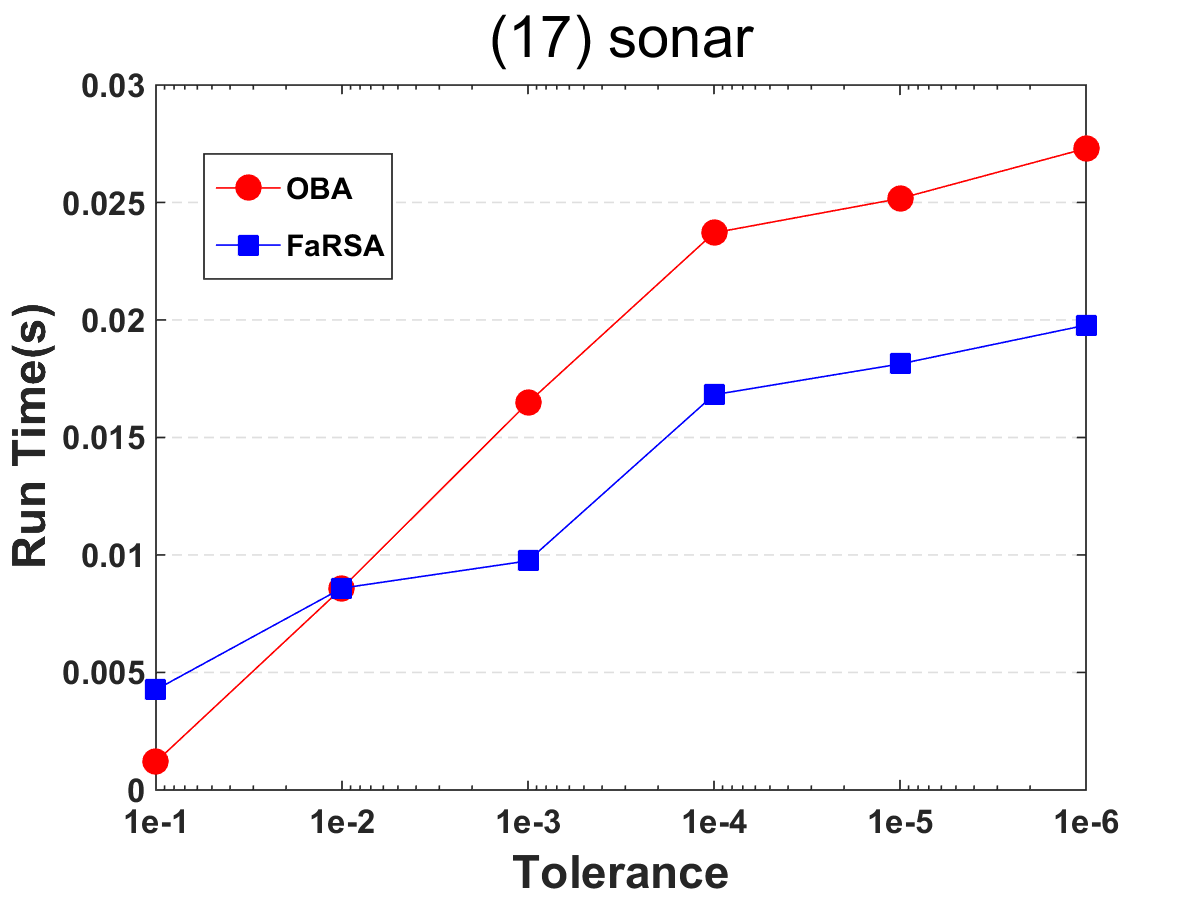}
\includegraphics[width=1.2in]{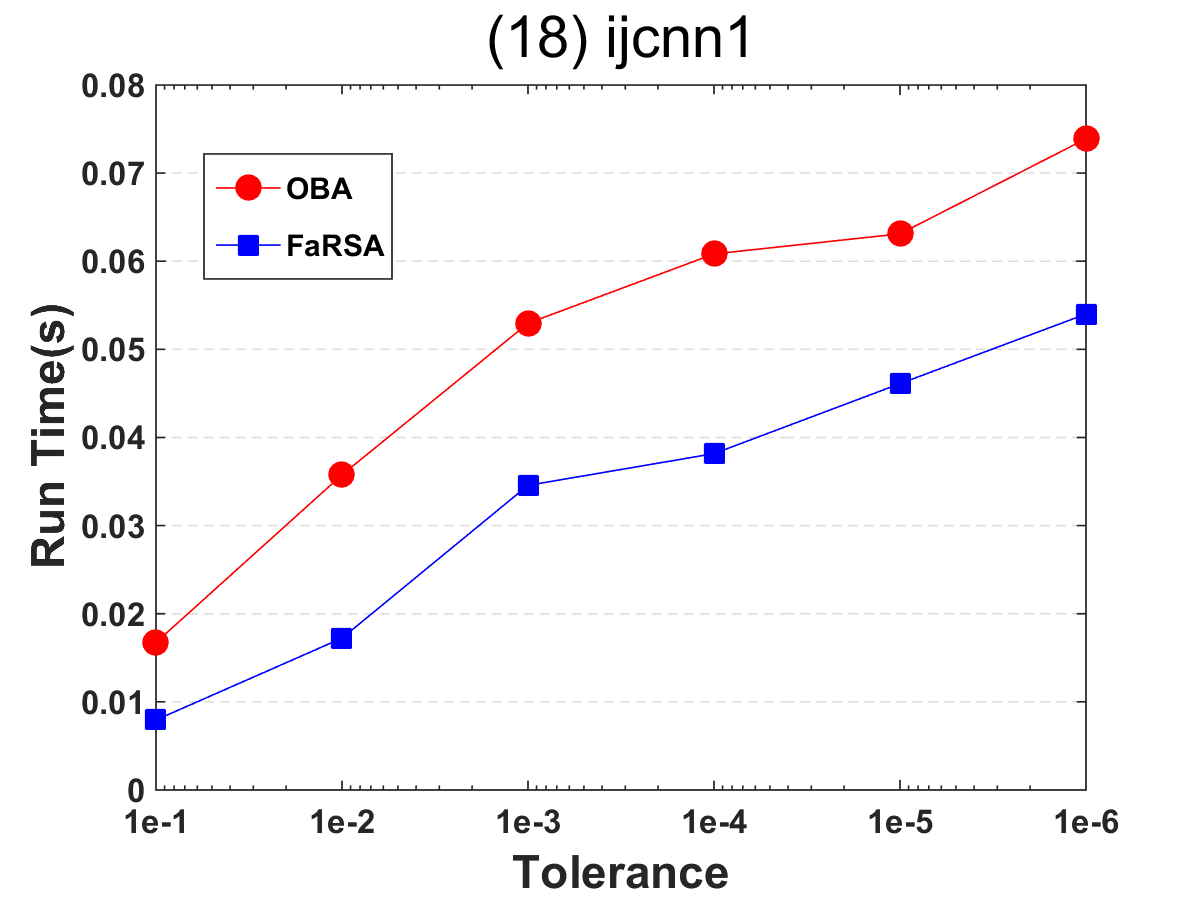}
\includegraphics[width=1.2in]{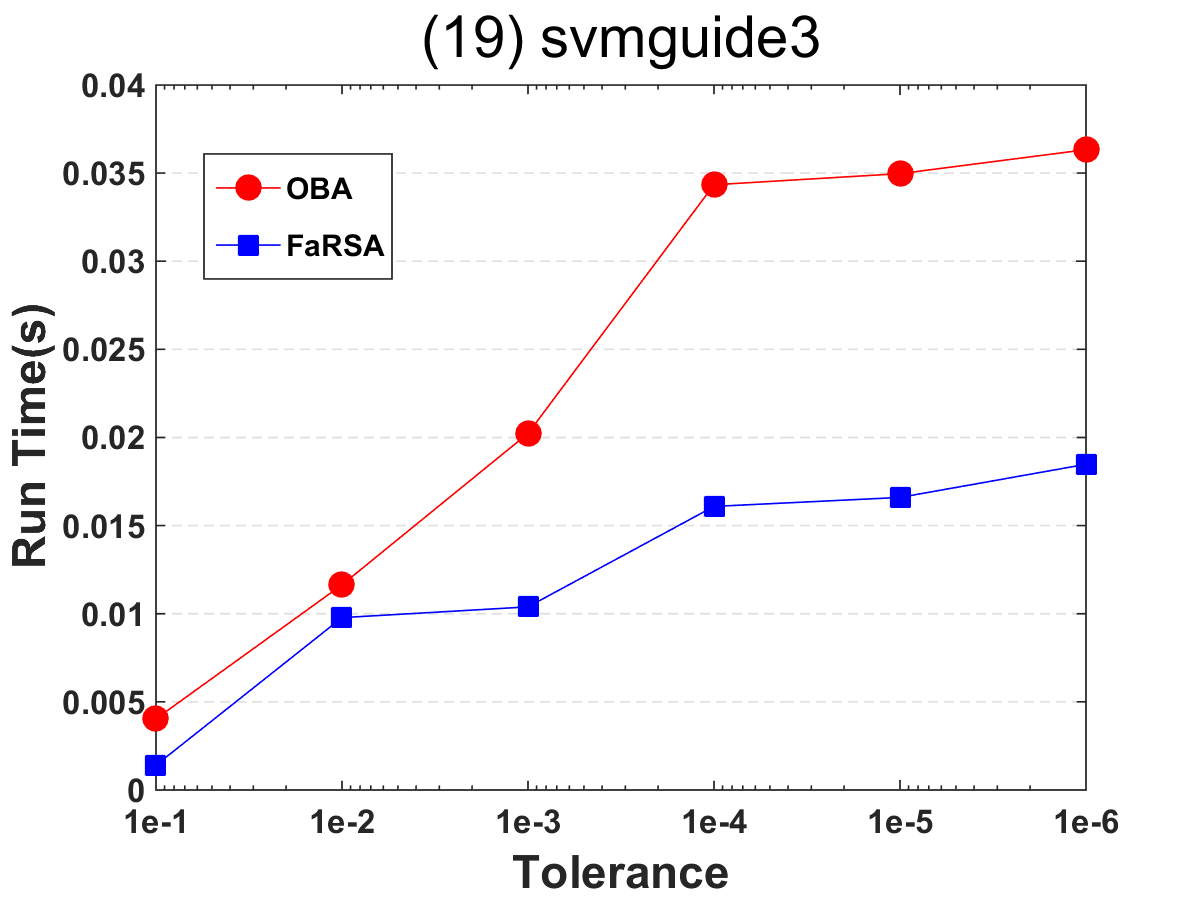}
\includegraphics[width=1.2in]{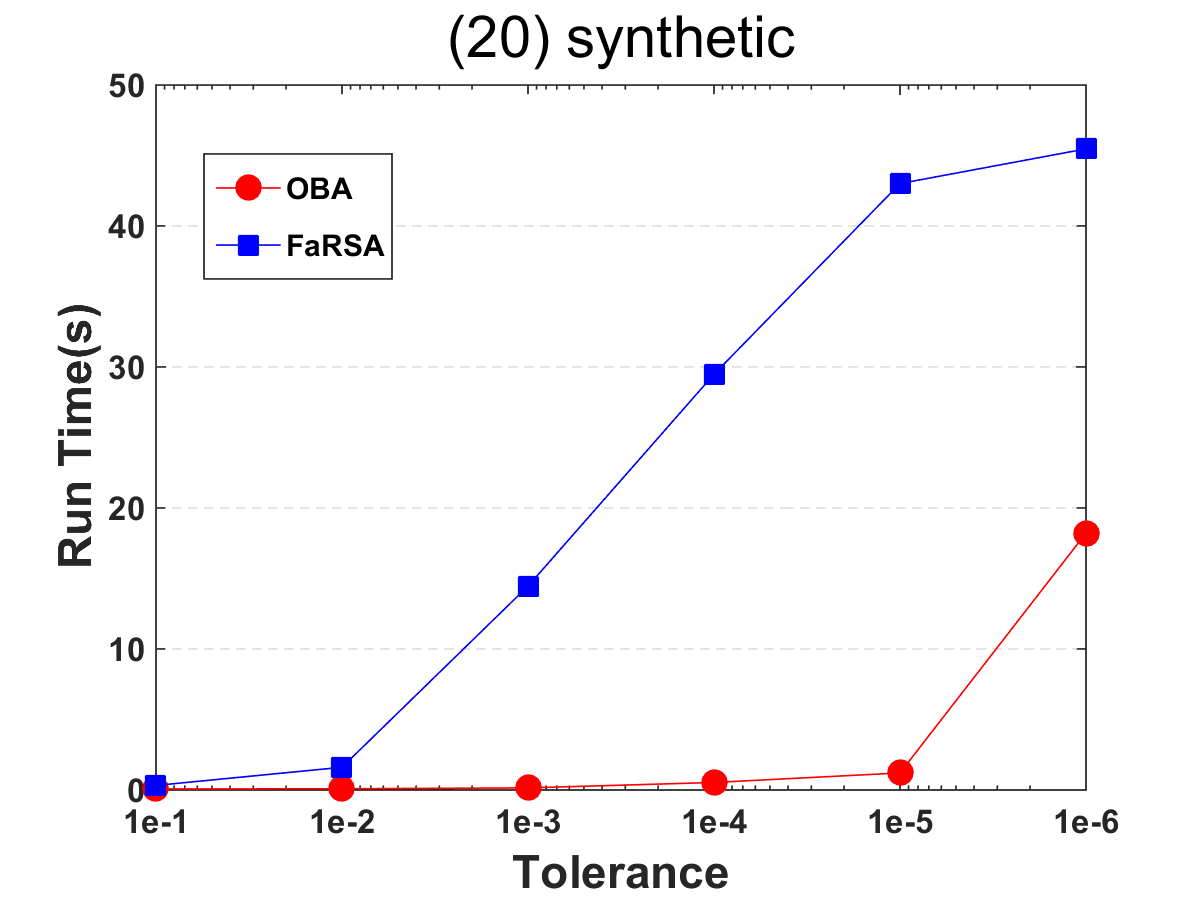}
\includegraphics[width=1.2in]{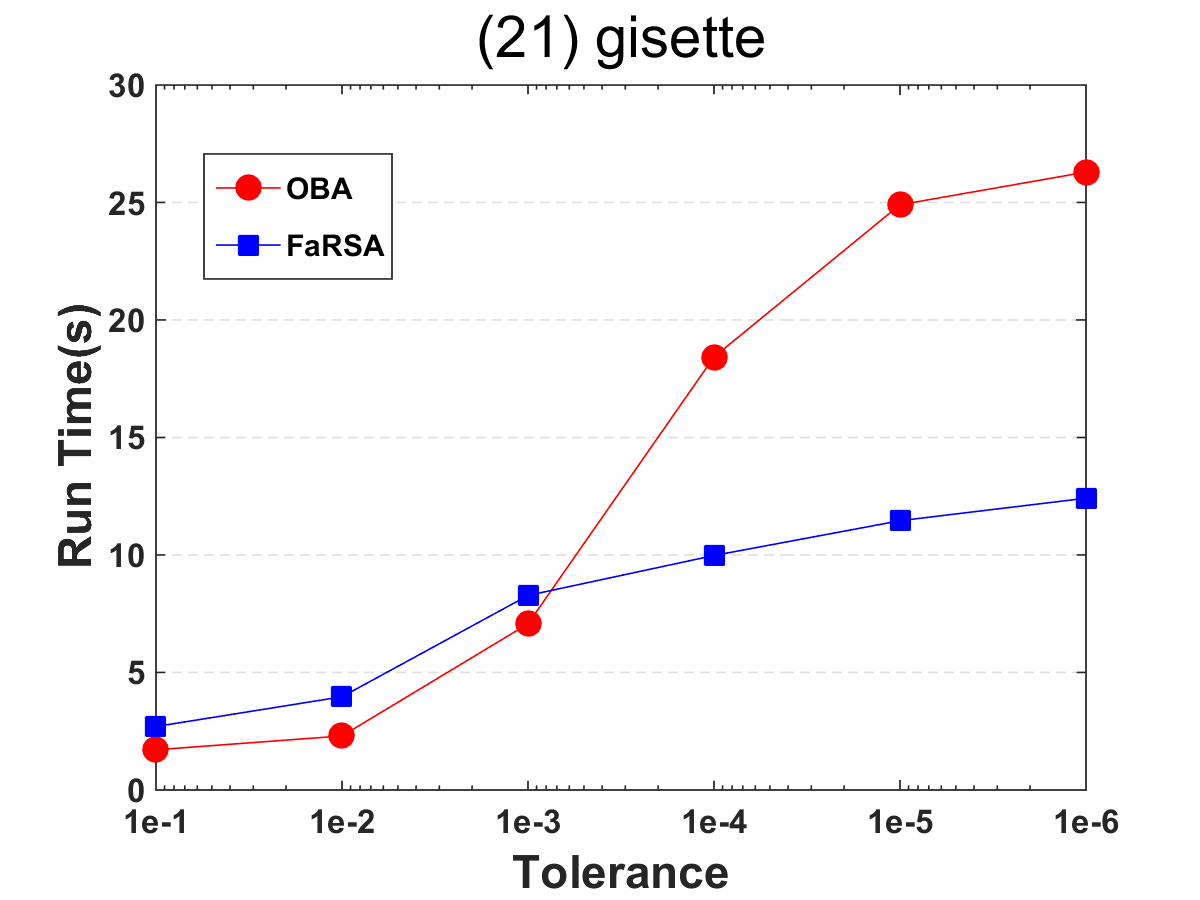}
\includegraphics[width=1.2in]{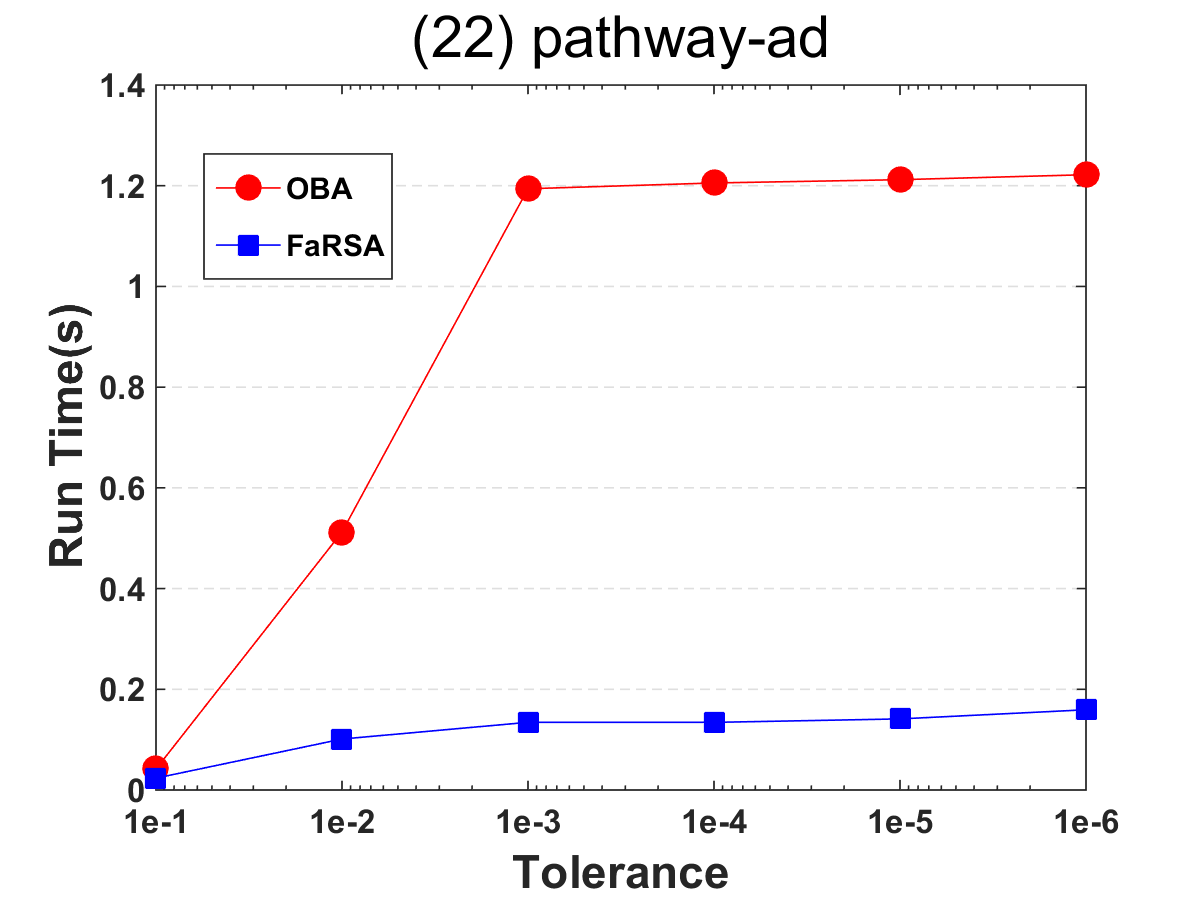}
\includegraphics[width=1.2in]{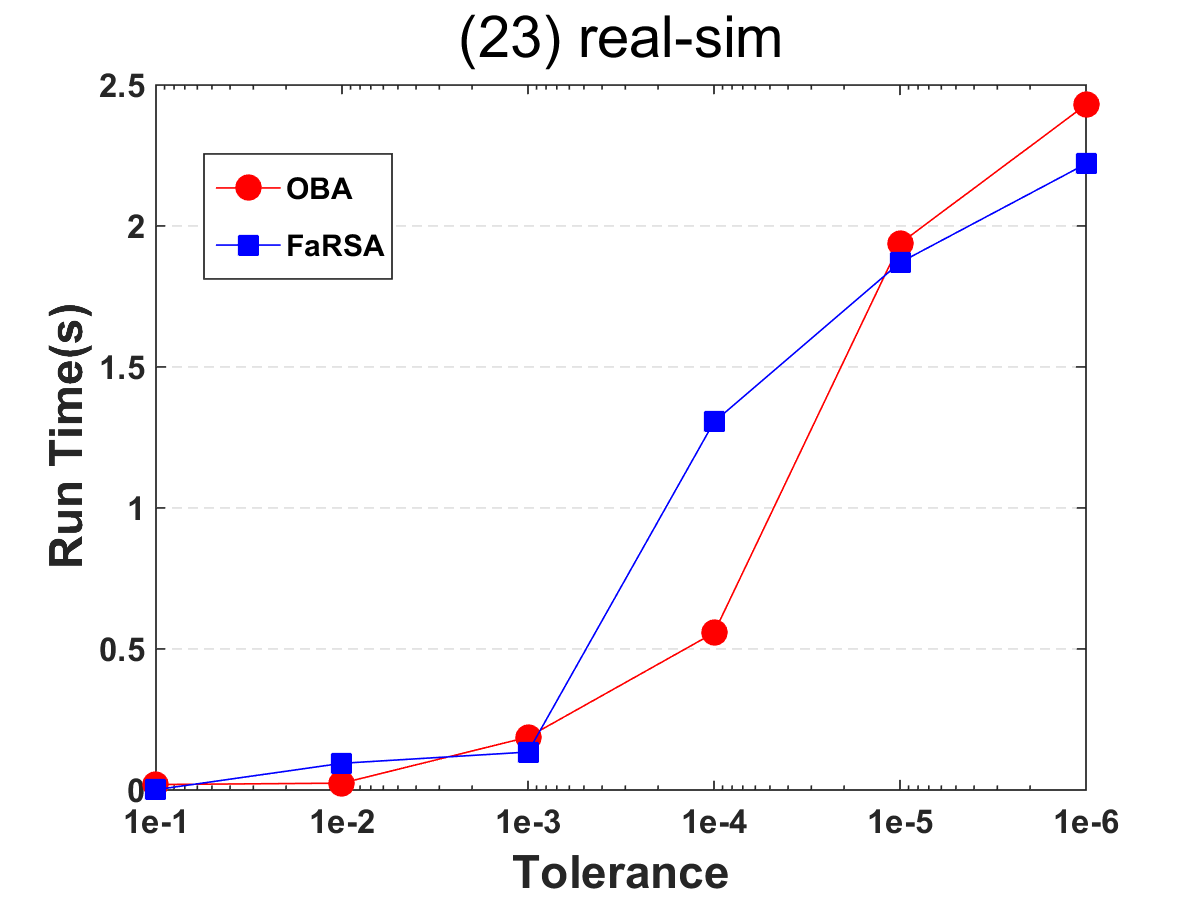}
\includegraphics[width=1.2in]{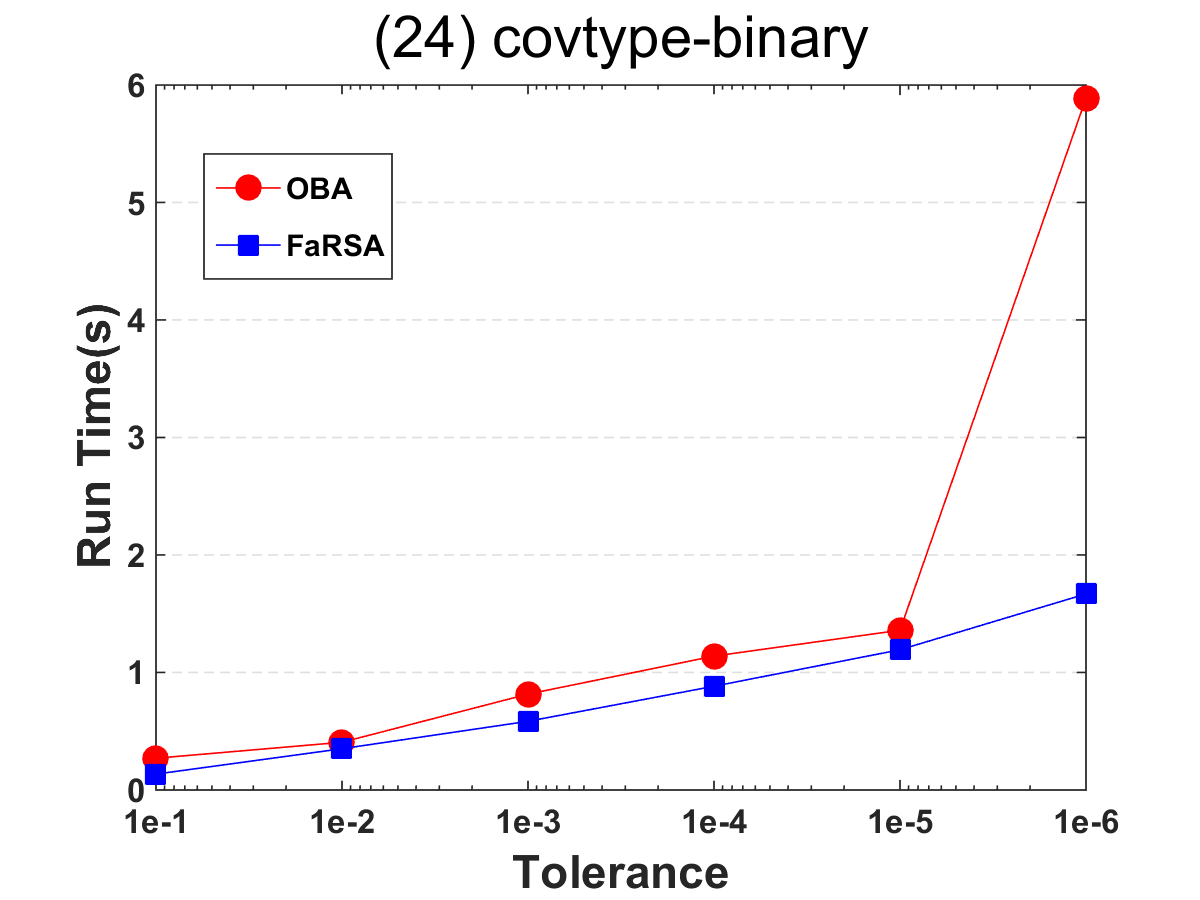}
\includegraphics[width=1.2in]{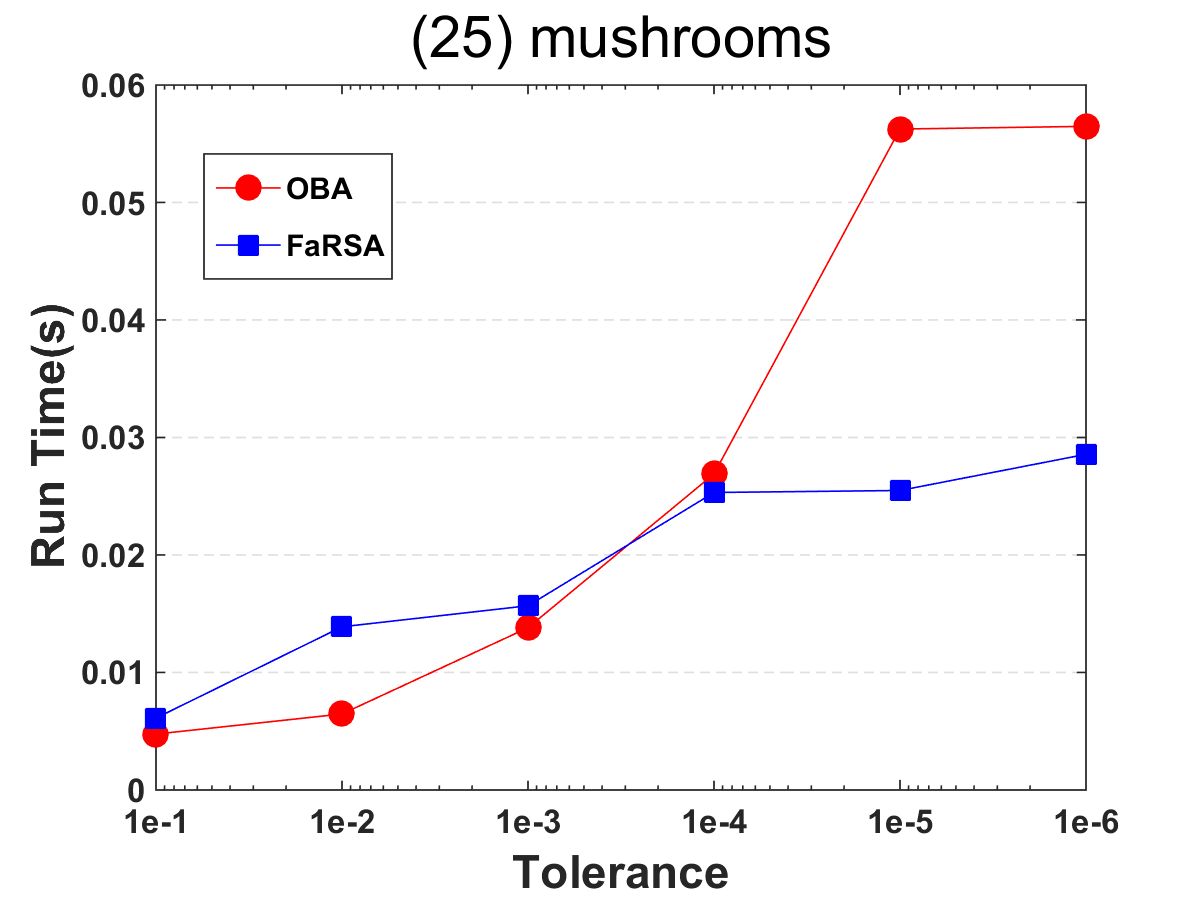}
\includegraphics[width=1.2in]{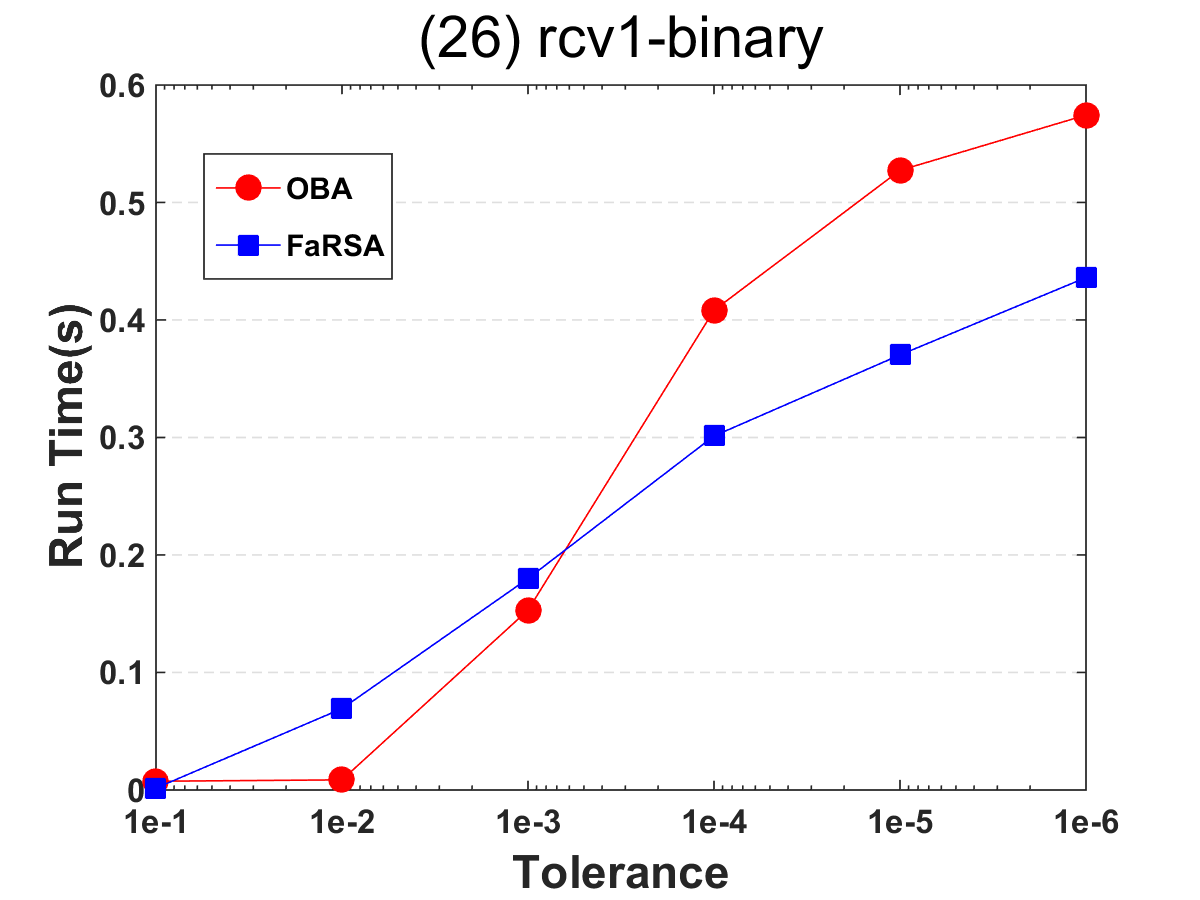}
\includegraphics[width=1.2in]{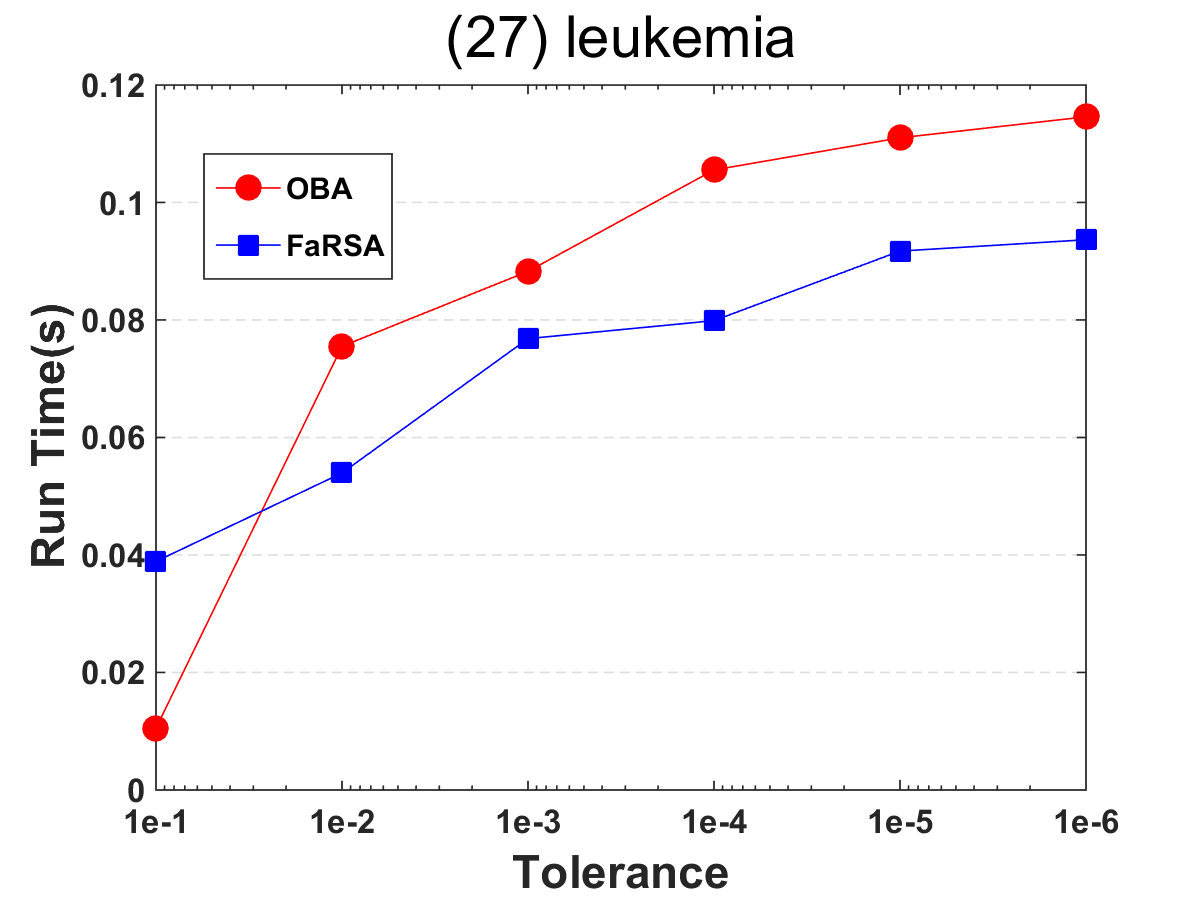}
\includegraphics[width=1.2in]{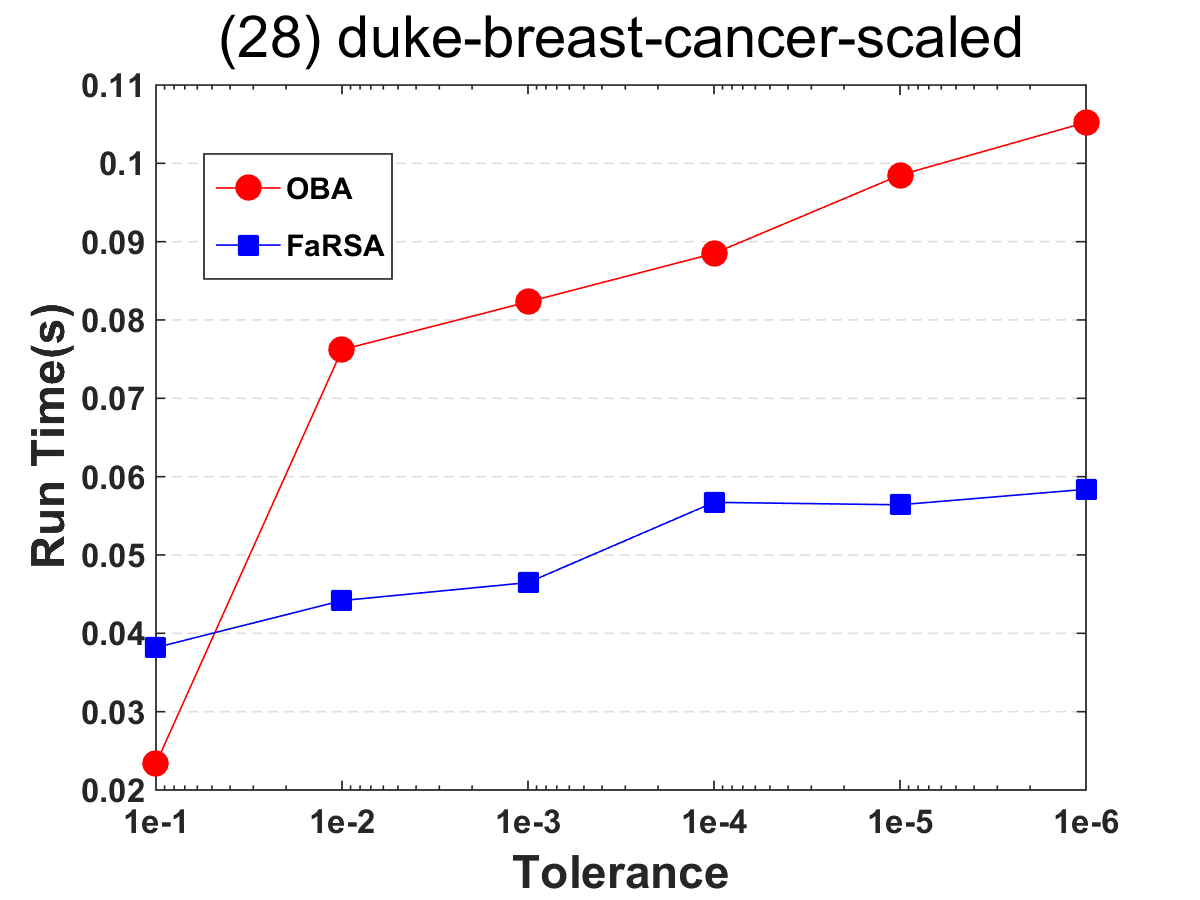}
\includegraphics[width=1.2in]{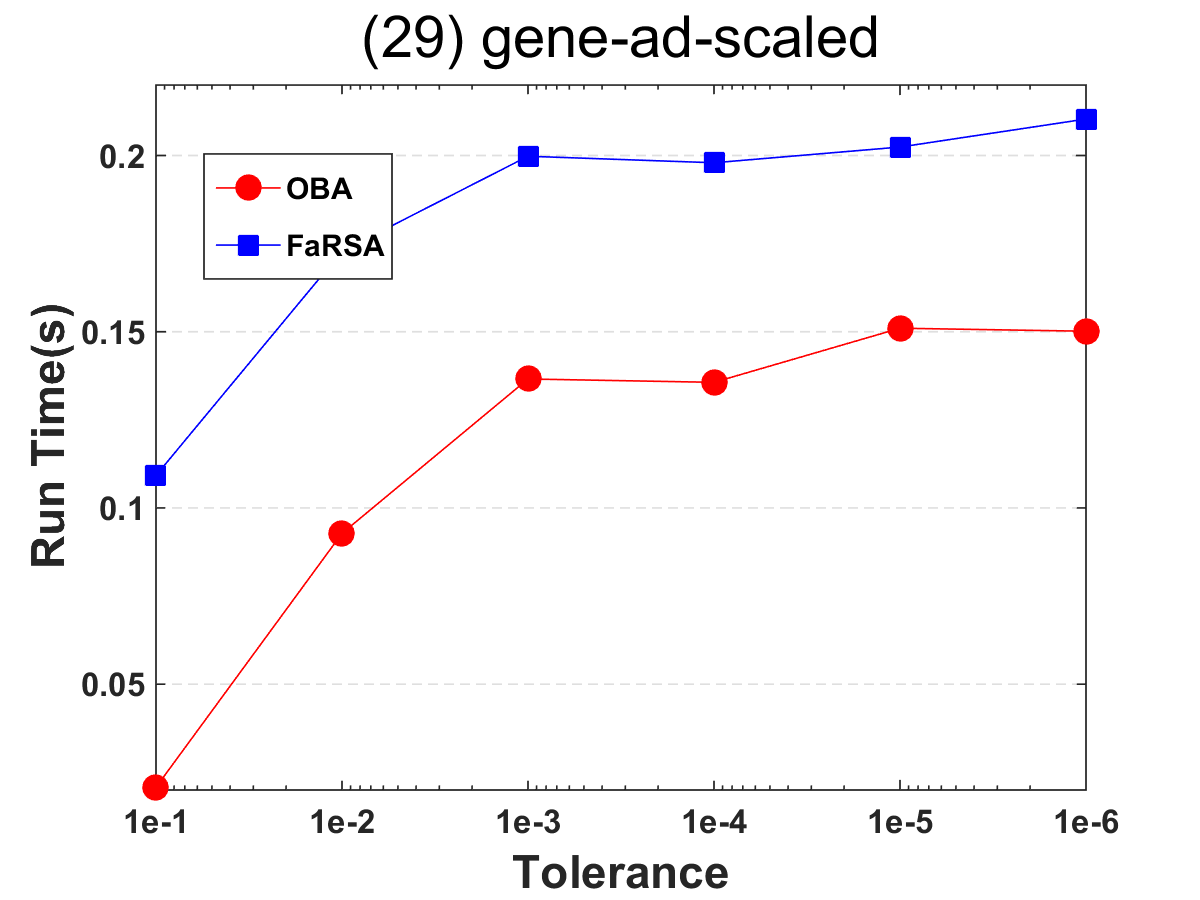}
\includegraphics[width=1.2in]{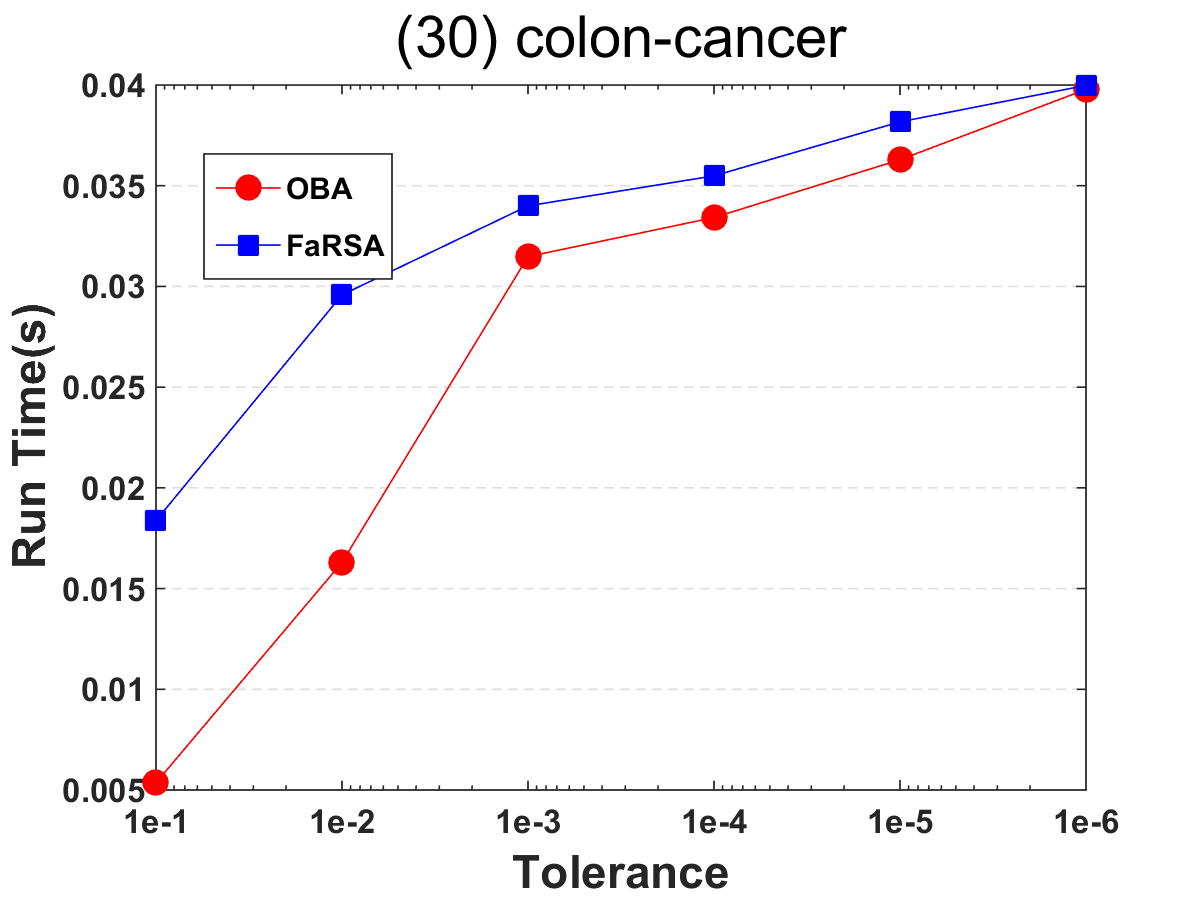}
\includegraphics[width=1.2in]{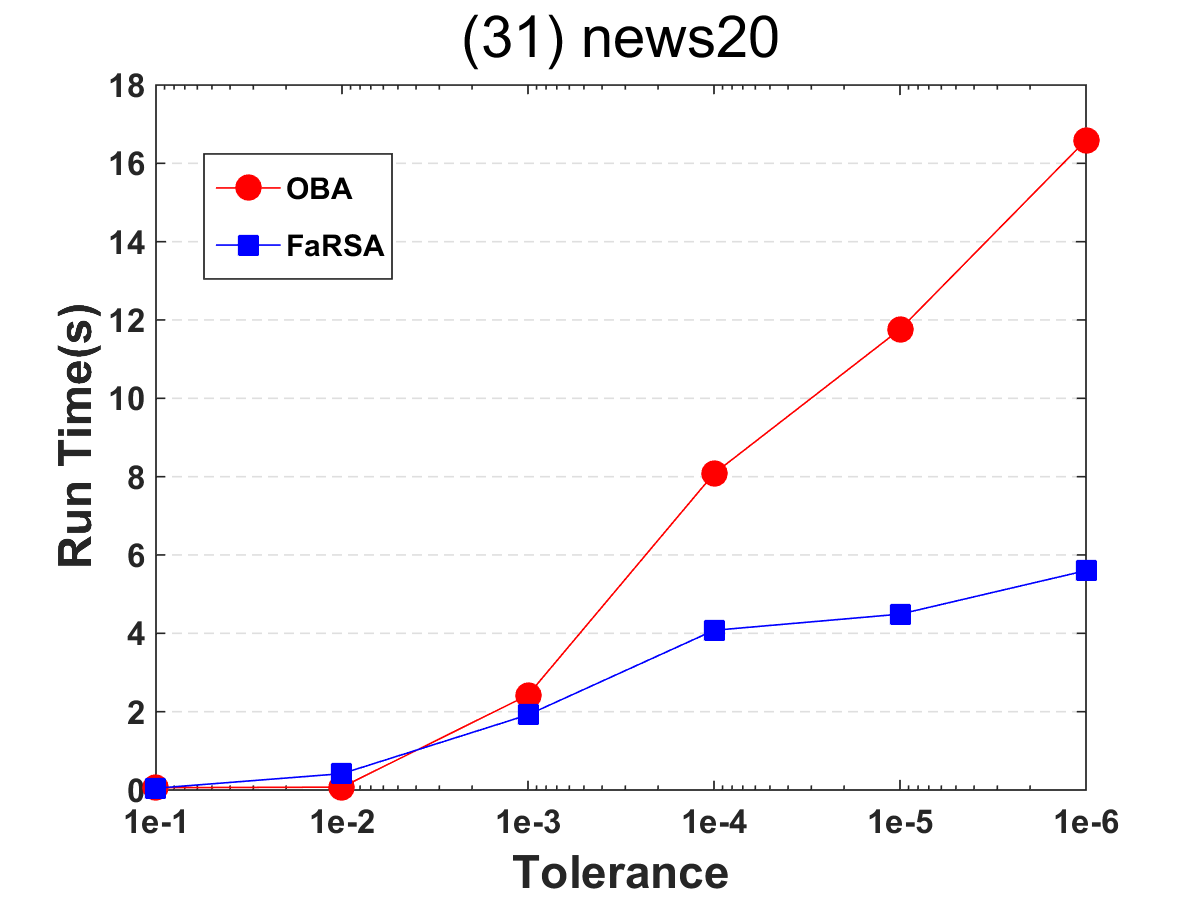}
\caption{CPU time comparison between \algacro{} and OBA for various stopping tolerances on the set of problems with scaled data.}
\label{figure:cputimescaledcmp}
\end{figure}

\begin {figure}[ht]
\centering
\includegraphics[width=1.2in]{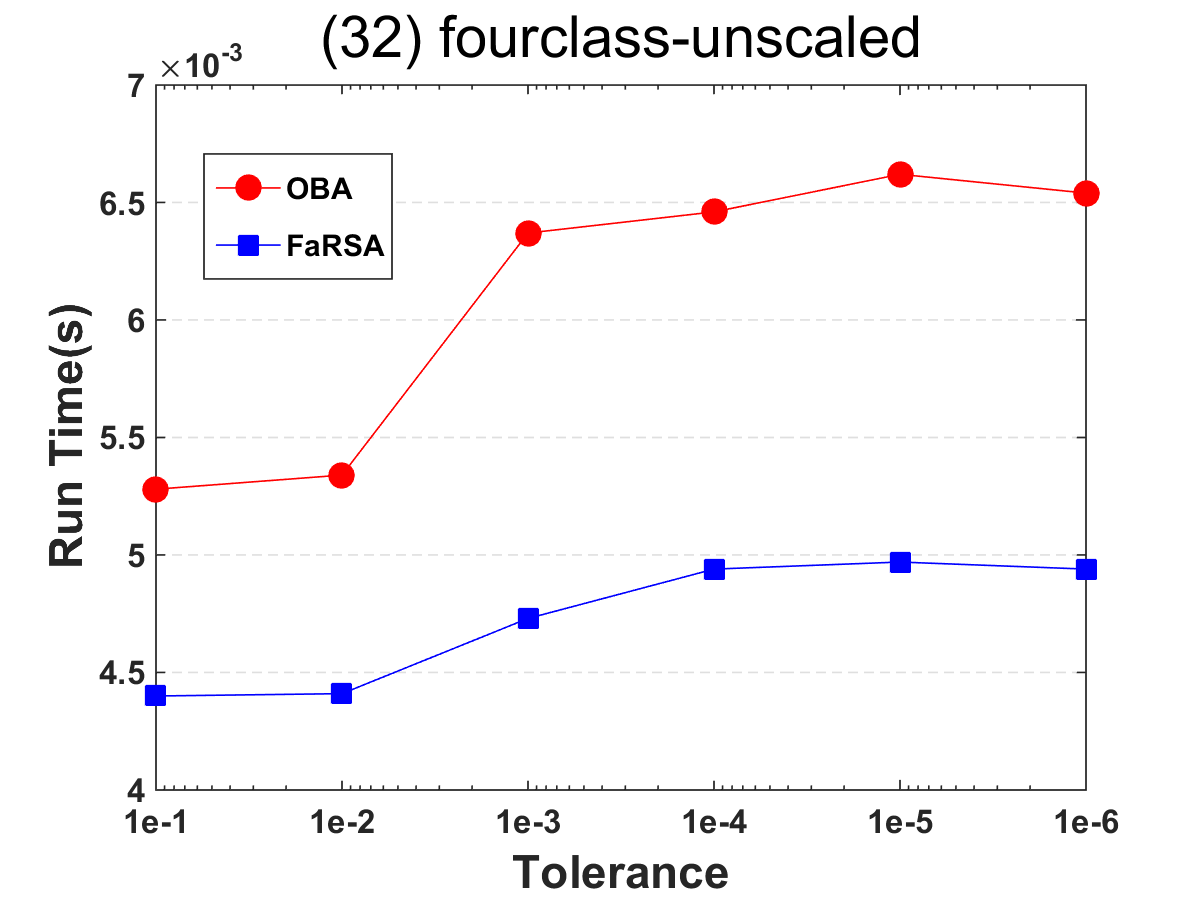}
\includegraphics[width=1.2in]{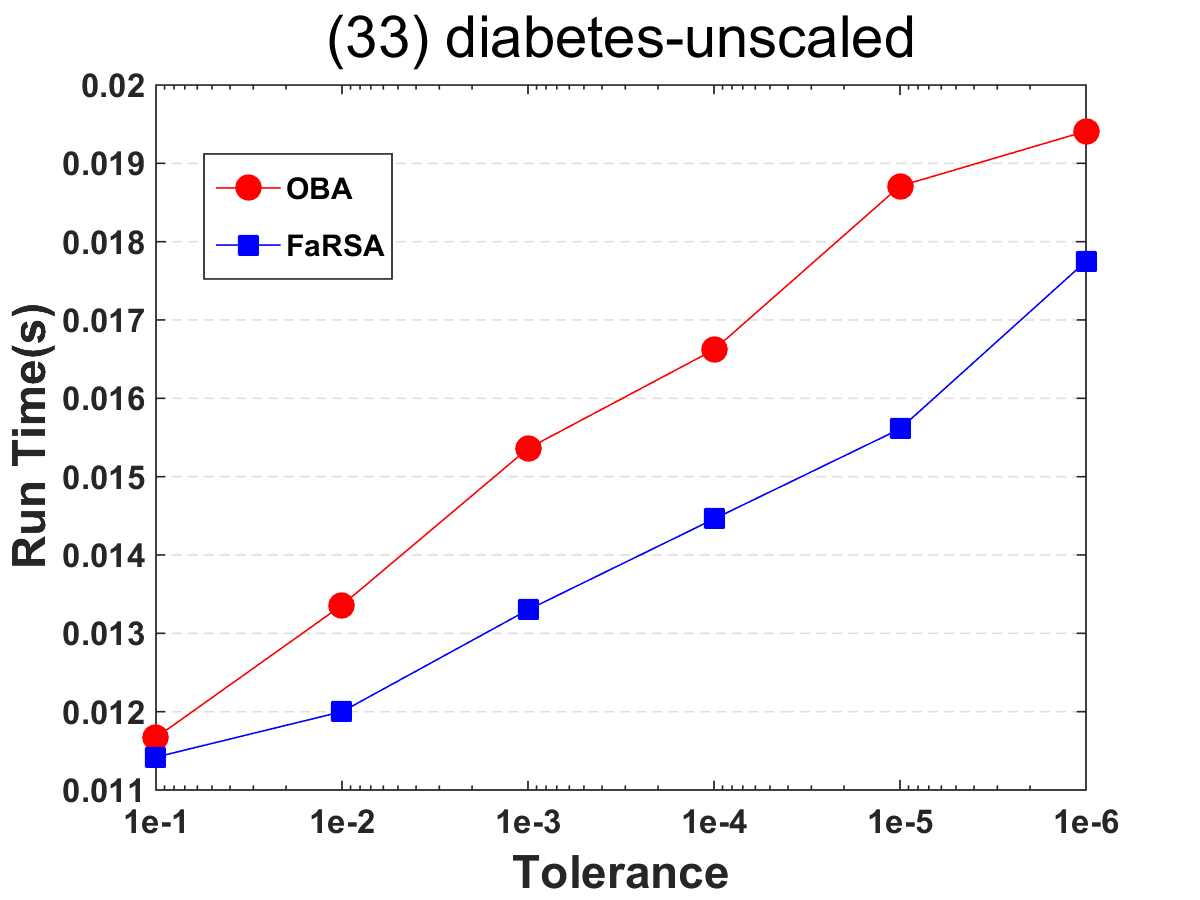}
\includegraphics[width=1.2in]{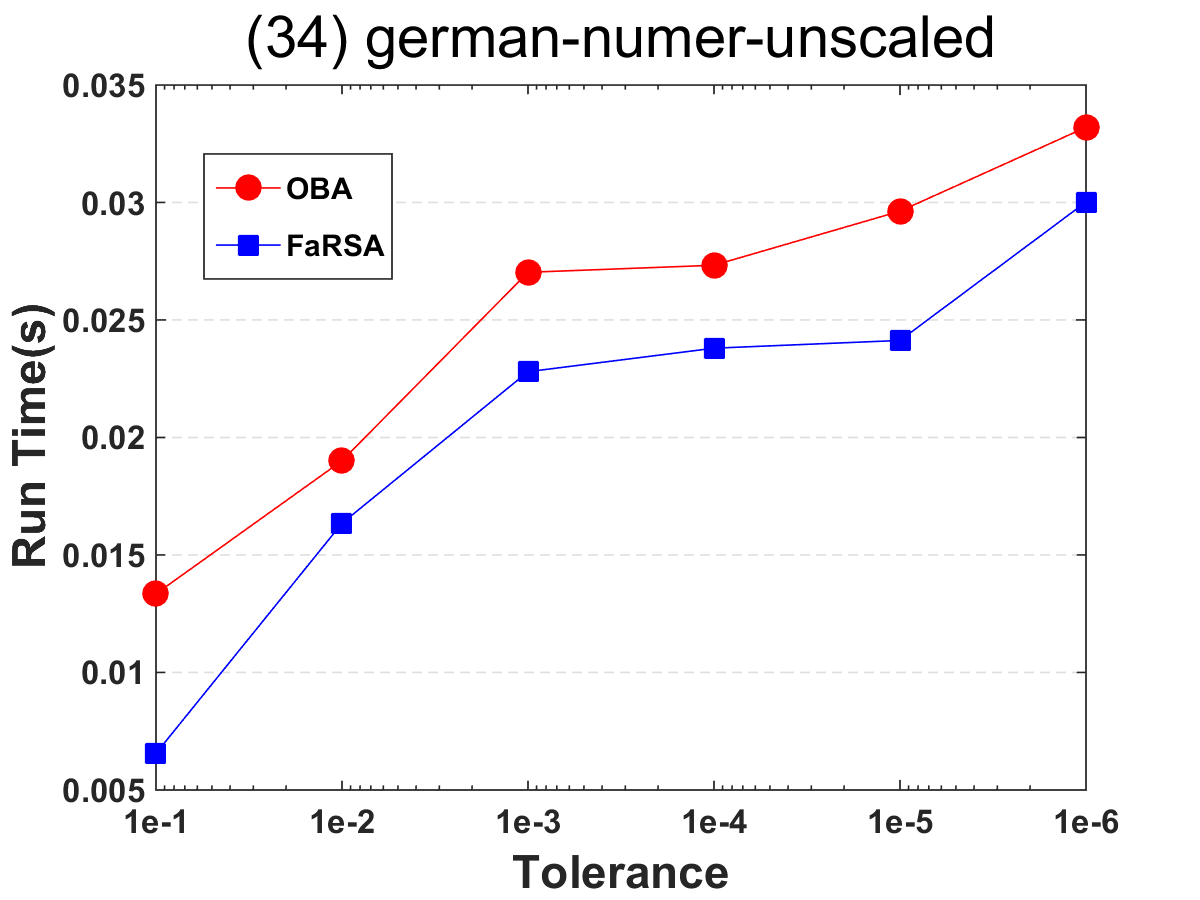}
\includegraphics[width=1.2in]{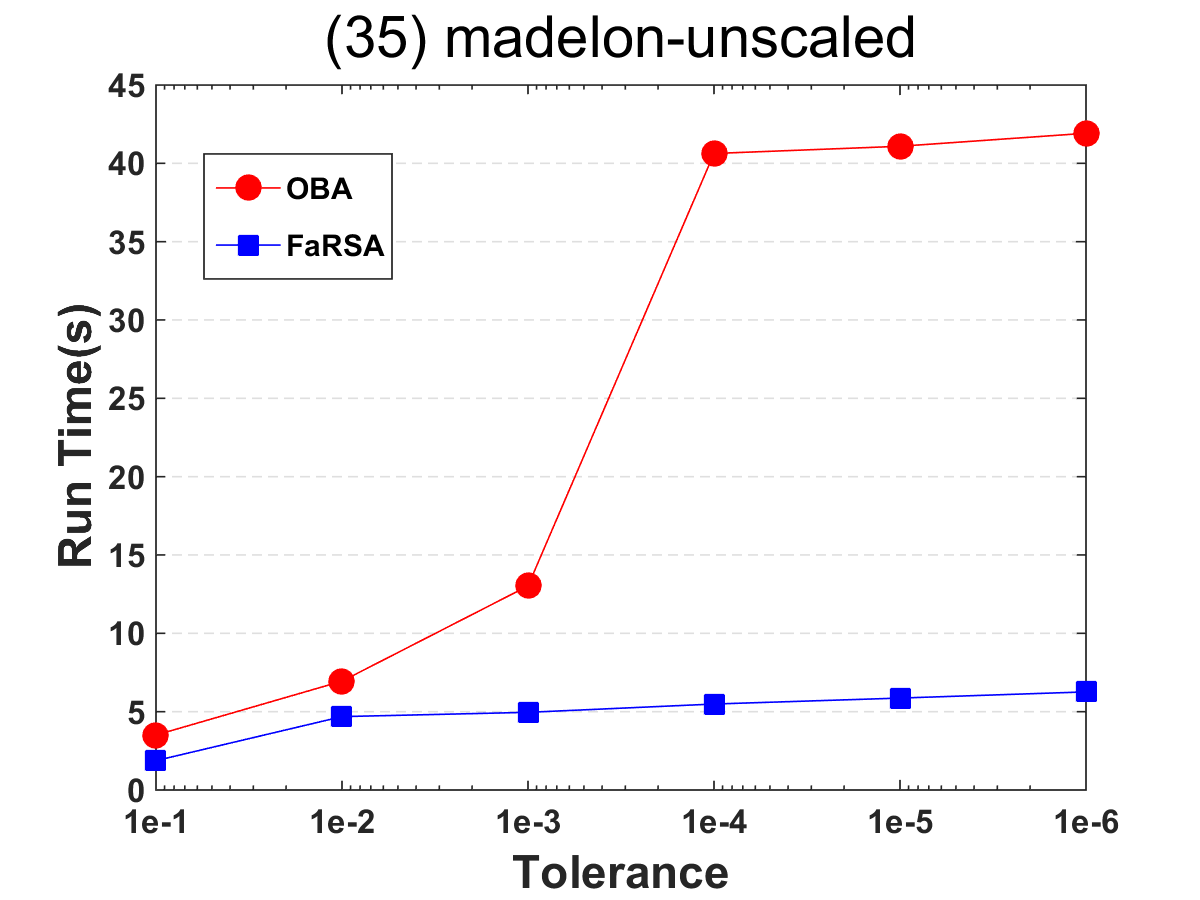}
\includegraphics[width=1.2in]{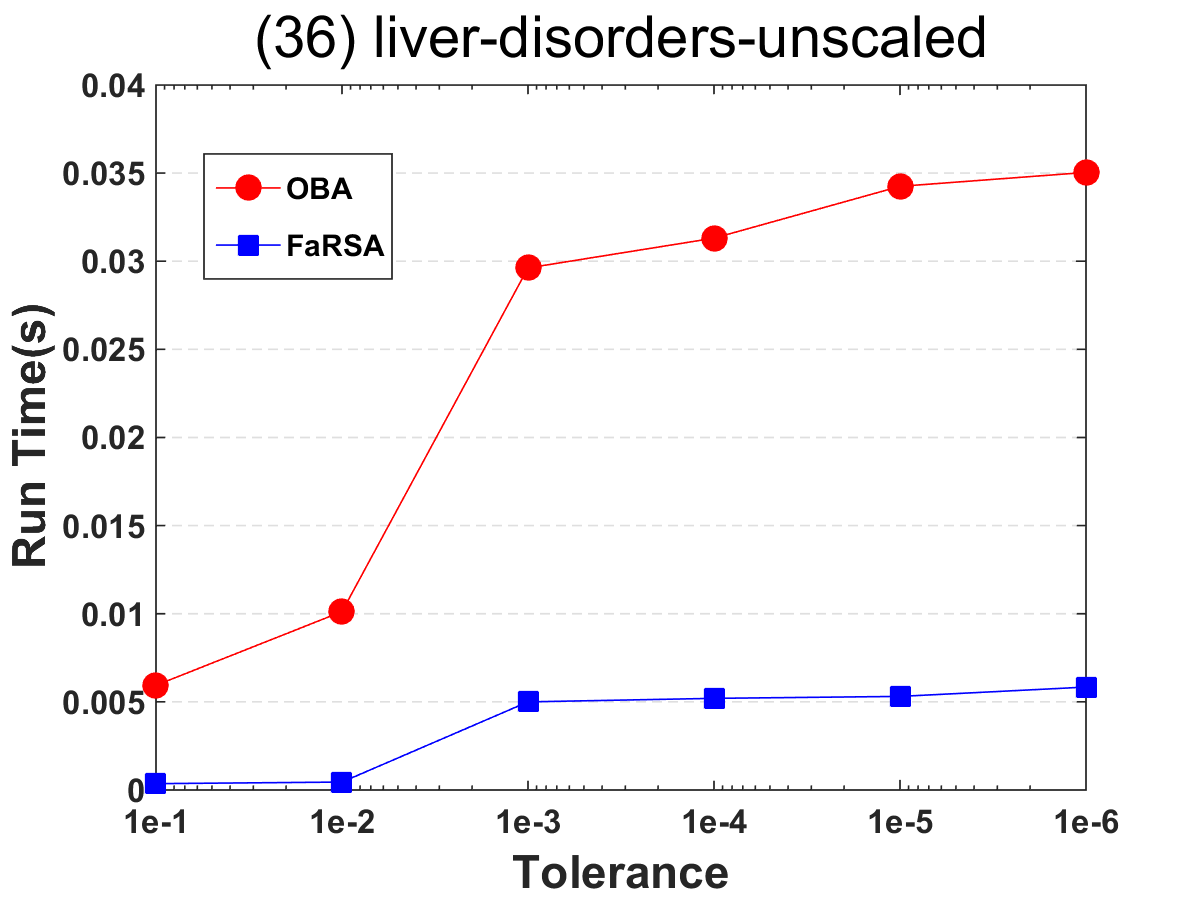}
\includegraphics[width=1.2in]{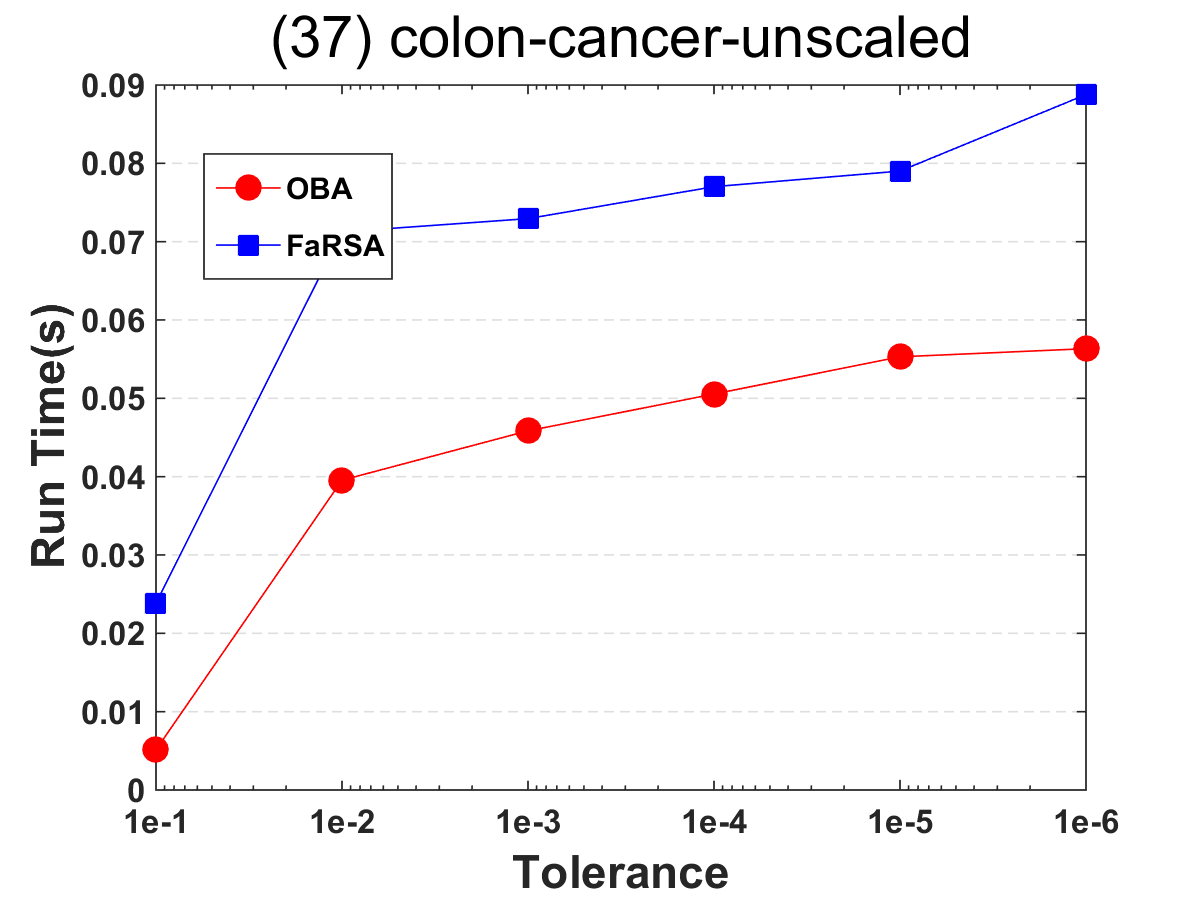}
\includegraphics[width=1.2in]{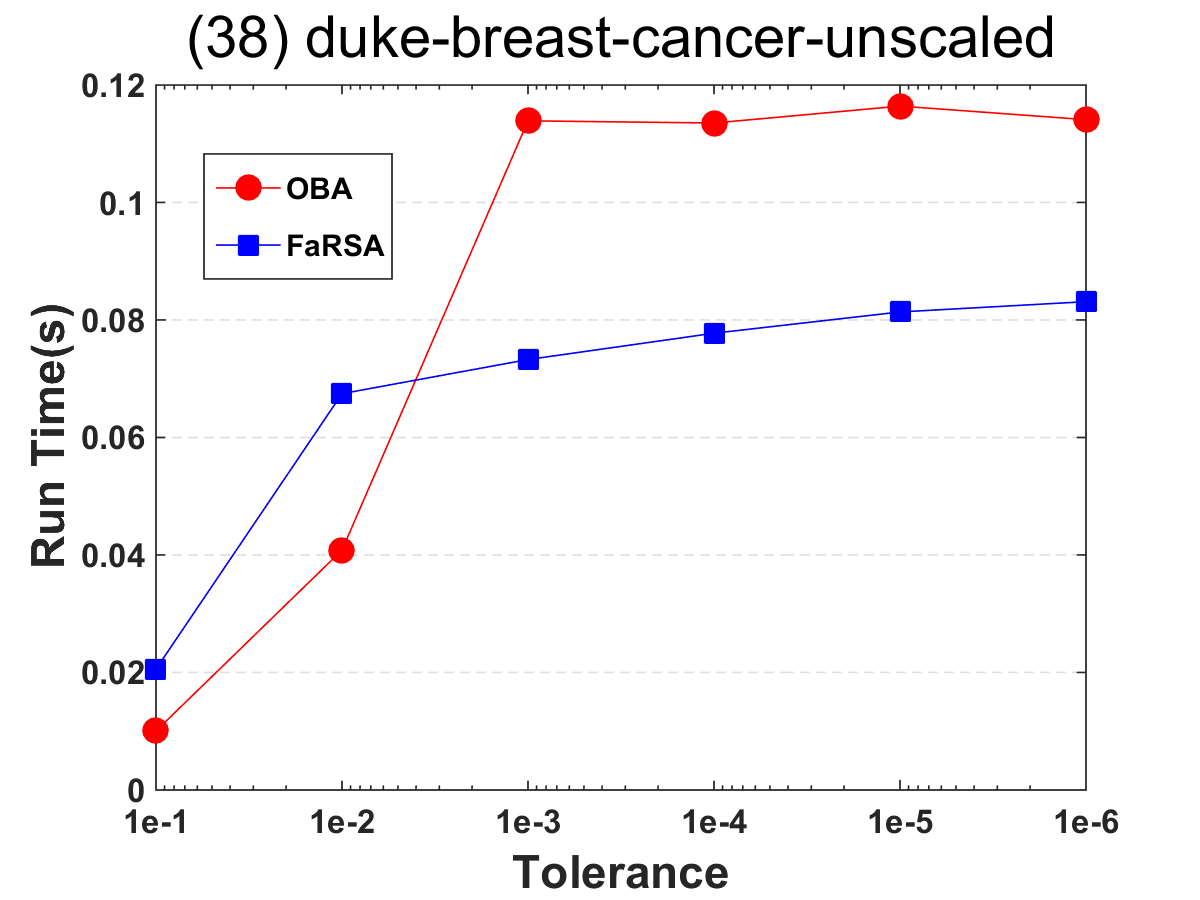}
\includegraphics[width=1.2in]{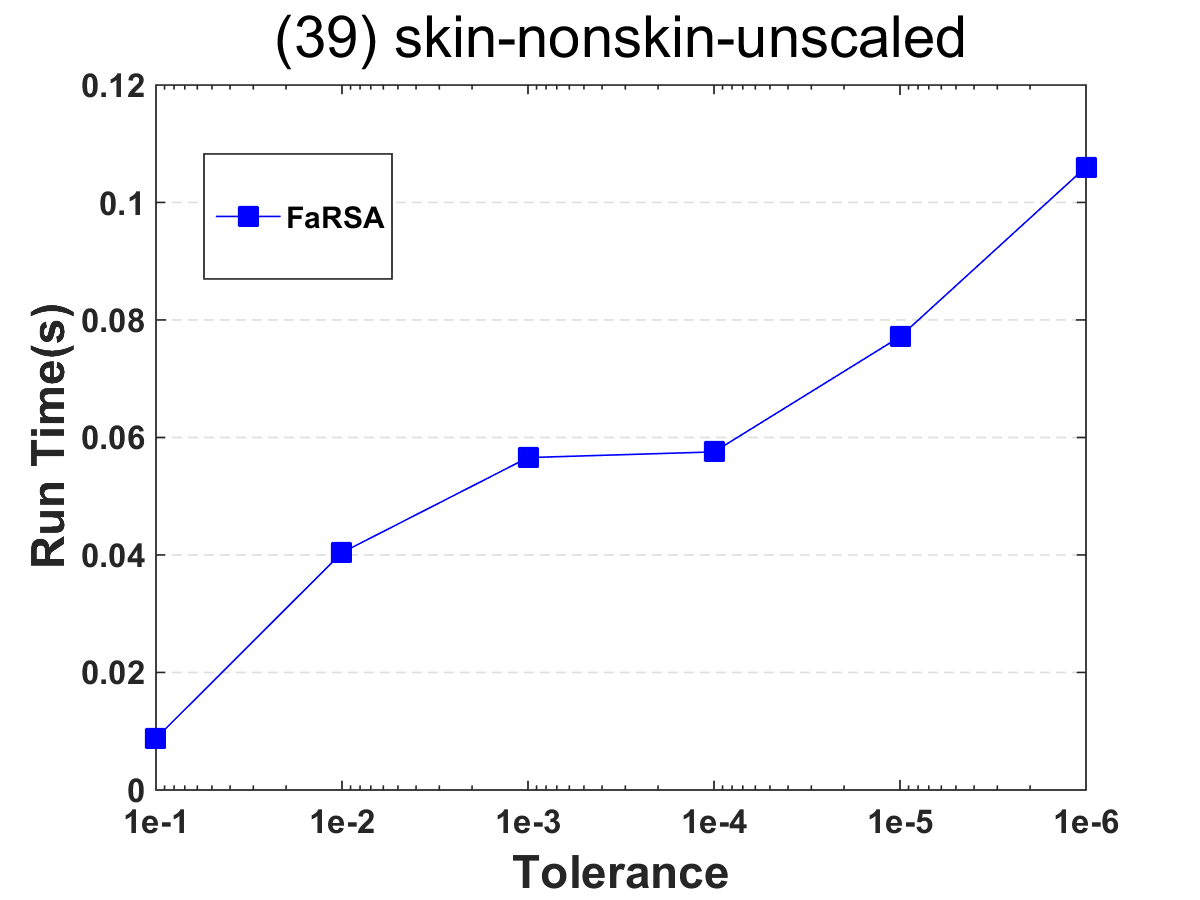}
\includegraphics[width=1.2in]{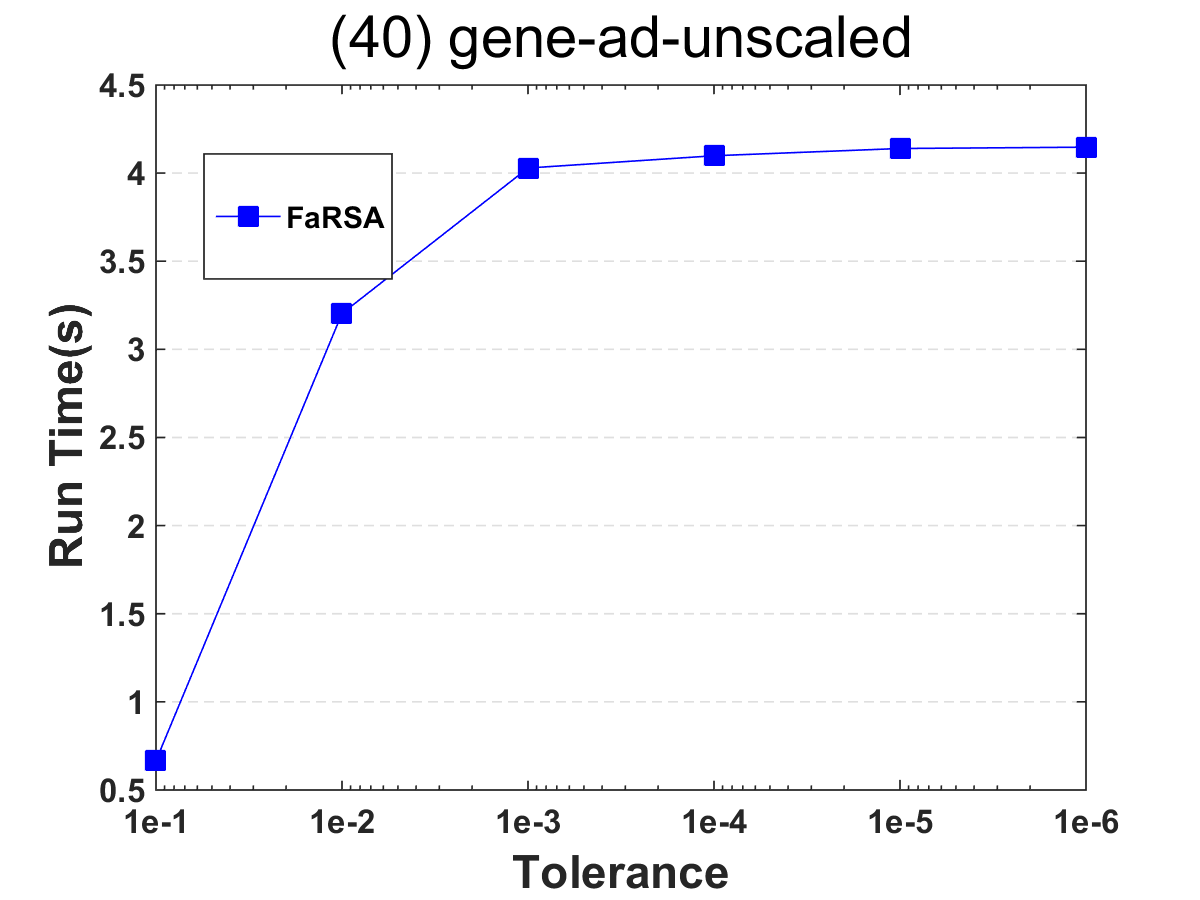}
\includegraphics[width=1.2in]{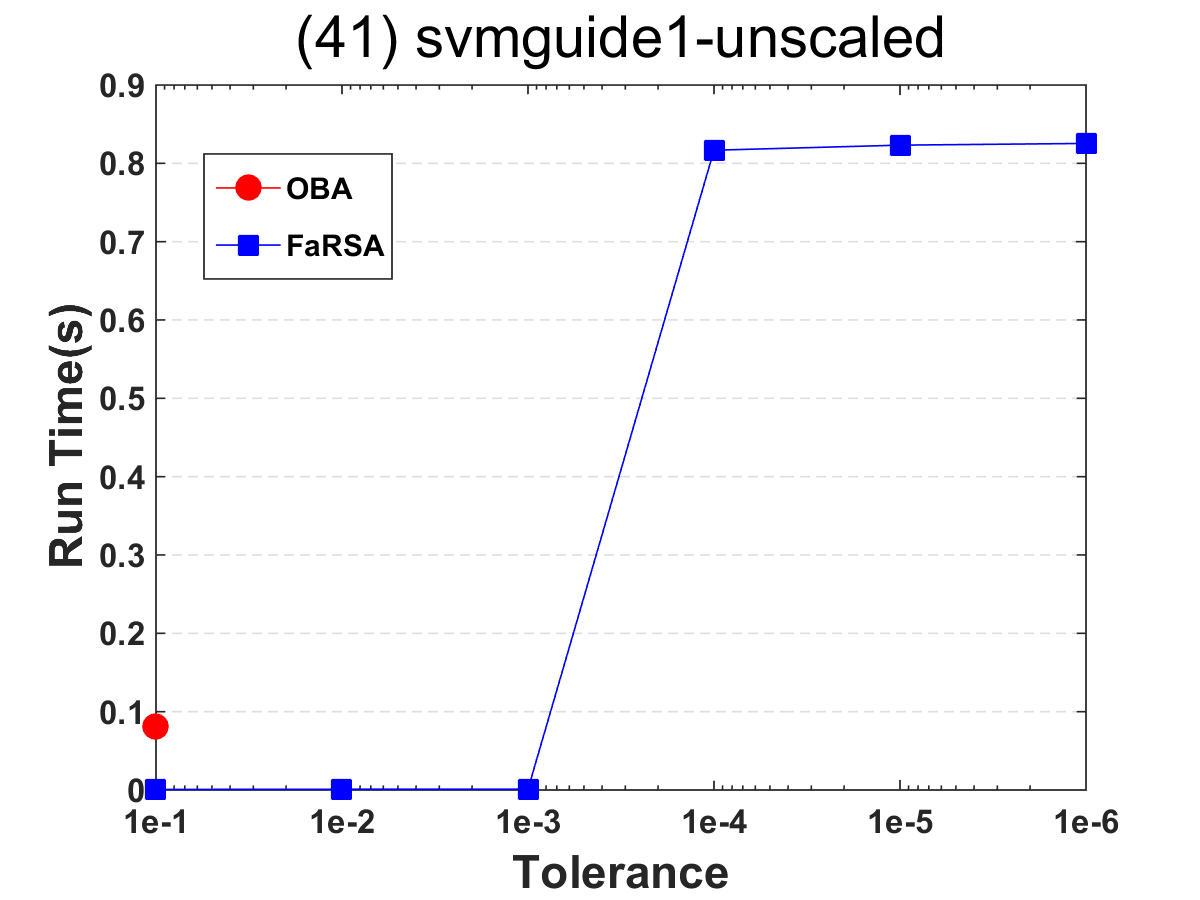}
\includegraphics[width=1.2in]{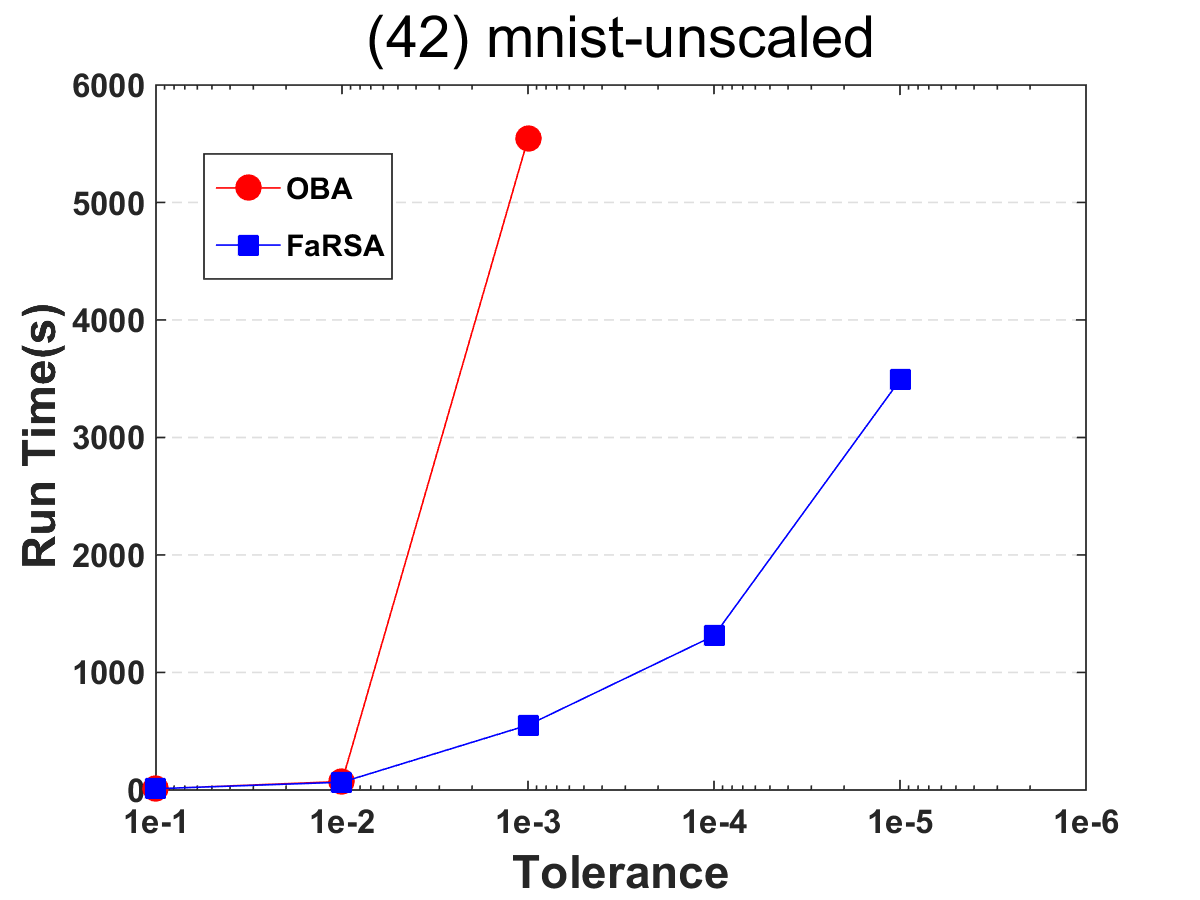}
\includegraphics[width=1.2in]{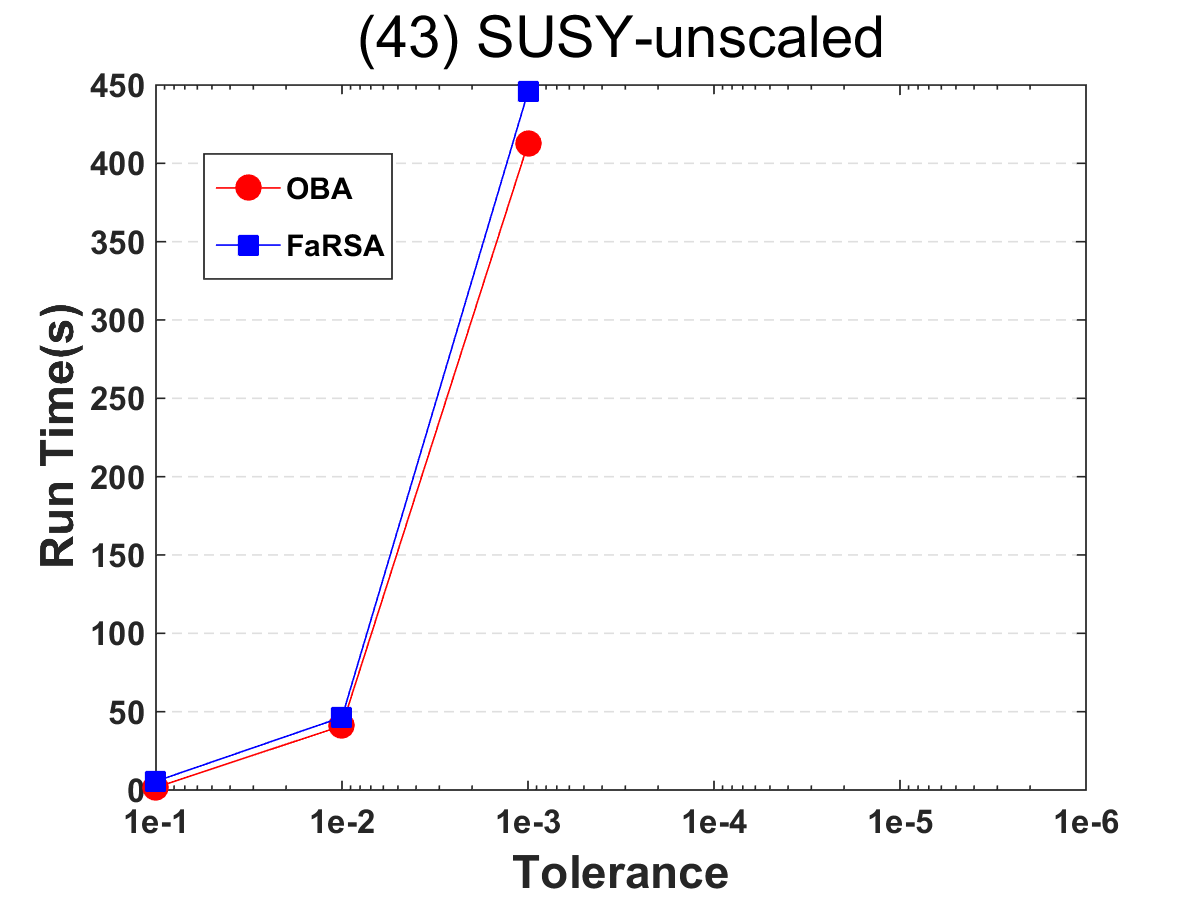}
\caption{CPU time comparison between \algacro{} and OBA for various stopping tolerances on the set of problems with unscaled data.}
\label{figure:cputimeunscaledcmp}
\end{figure}

The previous tables show that \algacro{} efficiently and reliably obtains solutions that satisfy the stopping tolerance value of $\epsilon = 10^{-6}$.  In practice, one sometimes only requests a low accuracy solution, often motivated by problems that may arise due to overfitting. To explore the performance of \algacro{} for various stopping tolerance levels, we created the plots in Figures~\ref{figure:cputimescaledcmp} and~\ref{figure:cputimeunscaledcmp}. Each plot shows the run time ($y$-axis) required to achieve the desired optimality accuracy ($x$-axis) for the stated problem.  These figures show that the superior performance of \algacro{} previously displayed for the stopping tolerance $10^{-6}$ also generally holds for larger stopping tolerances.

\section{Conclusions}
We presented a new reduced-space algorithm, \algacro, for minimizing an $\ell_1$-norm regularized convex function. The method uses an adaptive condition to determine when the current reduced-space should be updated, which is itself based on measures of optimality in the current reduced space and its complement.  Global convergence was established for our method, while numerical experiments on $\ell_1$-norm regularized logistic problems exhibited its practical performance.  In particular, the experiments showed that \algacro{} was generally superior to a recently proposed reduced-space orthant-based algorithm called OBA, regardless of the solution accuracy requested. Since OBA was shown in~\cite{keskar2015second} to be better than the state-of-the-art solver used in LIBLINEAR when the second derivative matrices were not diagonally dominant, we expect that \algacro{} will serve as a valuable data analysis tool.  OBA and our preliminary implementation of \algacro{} will often be outperformed by LIBLINEAR when the second derivative matrices are diagonally dominant.  However, \algacro{} was designed with great flexibility in how the subproblem solutions are obtained.  Although our preliminary implementation invoked linear-CG as the subproblem solver, our framework also allows for coordinate-descent based algorithms to be used, such as those used in LIBLINEAR.  We expect to provide such options as well as include features that control the subproblem size in a future release of our solver.  We believe that once these enhancements have been made, \algacro{} will be competitive with LIBLINEAR on all classes of problems, and superior when the second derivative matrices are not diagonally dominant.

\section*{Acknowledgments}
We thank Nitish Keskar, Jorge Nocedal, Figen \"{O}ztoprak, and Andreas W\"{a}chter for providing the \Matlab{} code of their OBA algorithm, and for several discussions on their numerical experience with OBA. We also thank Qingsong Zhu for providing us the datasets gene-ad and pathway-ad used in Section~\ref{subsec:data}.

\bibliographystyle{plain}
\bibliography{fplusl1}

\appendix

\section{A Relationship between \algacro{} and ISTA}
The step in Line~\ref{line:ls-beta} of Algorithm~\ref{alg:main.x} may be interpreted as a \emph{reduced} ISTA~\cite{beck2009fast} step.  The next lemma makes this relationship precise.
\begin{lemma} \label{lem:equi-ista}
  For any $k$, let $s_k$ be the \emph{full} ISTA step defined by
  \begin{equation*}
    \begin{aligned}
      s_k := &\ {\rm shrink}\big(x_k - \Grad f(x_k)\big) - x_k,\ \text{where} \\
      &\ [{\rm shrink}\big(x_k - \Grad f(x_k)\big) - x_k]_i :=
      \begin{cases}
        -[\Grad f(x_k)]_i + \lambda & \text{if $[x_k - \Grad f(x_k)]_i < -\lambda$,} \\
        -[x_k]_i & \text{if $[x_k - \Grad f(x_k)]_i \in [-\lambda,\lambda]$,} \\
        -[\Grad f(x_k)]_i - \lambda & \text{if $[x_k - \Grad f(x_k)]_i > \lambda$.}
      \end{cases}
    \end{aligned}
  \end{equation*}
  Then, $s_k = -(\beta(x_k) + \phi(x_k))$.
\end{lemma}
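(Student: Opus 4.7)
The plan is to prove the identity componentwise. Fix an index $i$ and split the argument into three main cases based on whether $i$ belongs to $\Iscr^0(x_k)$, $\Iscr^+(x_k)$, or $\Iscr^-(x_k)$. In each case I will compare the three possible values of $[s_k]_i$ dictated by the shrinkage operator (determined by whether $[x_k-\Grad f(x_k)]_i$ is below $-\lambda$, inside $[-\lambda,\lambda]$, or above $\lambda$) against the corresponding expressions for $[\beta(x_k)]_i$ and $[\phi(x_k)]_i$ given by the piecewise definitions at the start of Section~\ref{sec.algorithm}. Since $\beta$ is supported on $\Iscr^0(x_k)$ while $\phi$ vanishes there, at most one of the two summands on the right-hand side is nonzero for each fixed $i$, which keeps the bookkeeping manageable.

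For the zero case $i\in\Iscr^0(x_k)$ the verification is straightforward: the substitution $[x_k]_i=0$ reduces $[x_k-\Grad f(x_k)]_i$ to $-[\Grad f(x_k)]_i$, so the trichotomy in the shrinkage operator matches exactly the three branches in the definition of $\beta(x_k)$, and one checks by inspection that $[s_k]_i=-[\beta(x_k)]_i=-([\beta(x_k)]_i+[\phi(x_k)]_i)$ in each branch.

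The positive case $i\in\Iscr^+(x_k)$ requires more care because the definition of $[\phi(x_k)]_i$ itself splits further depending on the sign of $[\Grad f(x_k)]_i+\lambda$. I plan to handle the subcase $[\Grad f(x_k)]_i+\lambda\le 0$ first (where $\phi$ reduces to the \textquotedblleft otherwise\textquotedblright\ branch $[\Grad f(x_k)]_i+\lambda$, and $[x_k-\Grad f(x_k)]_i>\lambda$, giving agreement directly); then for the subcase $[\Grad f(x_k)]_i+\lambda>0$ I will further split according to which of the three shrinkage branches applies, using $[x_k]_i>0$ to evaluate the inner $\max$ and outer $\min$ appearing in $[\phi(x_k)]_i$. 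In each of these three branches, the defining inequalities on $[x_k]_i$ force the $\min$ and $\max$ in $[\phi(x_k)]_i$ to collapse to a single quantity, which is precisely $-[s_k]_i$. The negative case $i\in\Iscr^-(x_k)$ is handled analogously with signs reversed, so I would simply indicate that the argument is symmetric.

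The main obstacle is purely notational rather than conceptual: keeping track of the nested $\min/\max$ in $[\phi(x_k)]_i$ while simultaneously applying the sign hypotheses on $[x_k]_i$ and $[\Grad f(x_k)]_i\pm\lambda$. To organize the write-up and avoid case-explosion, I would present the argument as a short table or as a sequence of clearly labeled subcases, in each one writing down $[x_k-\Grad f(x_k)]_i$, the resulting $[s_k]_i$, and the matching expression for $[\beta(x_k)]_i+[\phi(x_k)]_i$, and concluding equality on the spot.
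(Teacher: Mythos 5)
Your proposal is correct and takes essentially the same approach as the paper: an exhaustive componentwise case analysis in which the sign hypotheses force the nested $\min$/$\max$ in $[\phi(x_k)]_i$ to collapse to a single quantity equal to $-[s_k]_i$, with symmetry invoked for the mirror cases. The only difference is organizational — the paper's outer split is on the three shrinkage branches of $[x_k - \Grad f(x_k)]_i$ with inner subcases on the sign of $[x_k]_i$ and of $[\Grad f(x_k)]_i \pm \lambda$, whereas you transpose that nesting (outer split on $\Iscr^0(x_k)$, $\Iscr^+(x_k)$, $\Iscr^-(x_k)$), which covers the same case grid and changes nothing mathematically.
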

\begin{proof}
  Recall the definitions of the components of $\beta(x_k)$ and $\phi(x_k)$, which may be rewritten in a slightly more convenient form as follows:
  \begin{align*}
    & [\beta(x_k)]_i := \\
    & \begin{cases}
      [\Grad f(x_k)]_i + \lambda & \text{if $[x_k]_i = 0$ and $[\Grad f(x_k)]_i +\lambda < 0$,} \\
      [\Grad f(x_k)]_i - \lambda & \text{if $[x_k]_i = 0$ and $[\Grad f(x_k)]_i - \lambda > 0$,} \\
      0 & \text{otherwise,}
    \end{cases}
    \\
    & [\phi(x_k)]_i := \\
    & \begin{cases}
      0 & \text{if $[x_k]_i = 0$,} \\
      \min\{[\Grad f(x_k)]_i + \lambda, \max\{[x_k]_i,[\Grad f(x_k)]_i - \lambda\} \} & \text{if $[x_k]_i > 0$ and $[\Grad f(x_k)]_i +\lambda > 0$,} \\
      \max\{[\Grad f(x_k)]_i - \lambda, \min\{[x_k]_i,[\Grad f(x_k)]_i + \lambda\} \} & \text{if $[x_k]_i < 0$ and $[\Grad f(x_k)]_i -\lambda < 0$,} \\
      [\Grad f(x_k) + \lambda \cdot \text{sgn}(x_k)]_i & \text{otherwise.}
    \end{cases}
  \end{align*}
  For any component $i$, we proceed by considering various cases and subcases. \\
  \textbf{Case 1:}
   Suppose that
    \begin{equation}\label{eq.case1}
      [x_k - \Grad f(x_k)]_i > \lambda \text{, meaning that } [x_k]_i > [\Grad f(x_k)]_i + \lambda.
    \end{equation}
    \textbf{Subcase 1a:}
     Suppose that $[x_k]_i > 0$ and $[\Grad f(x_k)]_i + \lambda > 0$, so $[\Grad f(x_k)]_i > -\lambda$.  Then, $[\beta(x_k)]_i = 0$ and
      \begin{equation}\label{eq.t1}
        [\phi(x_k)]_i = \min\{[\Grad f(x_k)]_i + \lambda, \max\{[x_k]_i,[\Grad f(x_k)]_i - \lambda\}\}.
      \end{equation}
      By \eqref{eq.case1}, it follows that $[x_k]_i > [\Grad f(x_k)]_i + \lambda > [\Grad f(x_k)]_i - \lambda$, which along with $[x_k]_i > 0$ means that the $\max$ in \eqref{eq.t1} evaluates as $[x_k]_i$.  Then, again with \eqref{eq.case1}, the $\min$ in \eqref{eq.t1} yields
      \begin{equation}\label{eq.result1}
        [\phi(x_k)]_i = [\Grad f(x_k)]_i + \lambda = -[s_k]_i.
      \end{equation}
      \textbf{Subcase 1b:}
       Suppose that $[x_k]_i > 0$ and $[\Grad f(x_k)]_i + \lambda \leq 0$, so $[\Grad f(x_k)]_i \leq -\lambda$.  Then, $[\beta(x_k)]_i = 0$ and
      \begin{equation}\label{eq.result2}
        [\phi(x_k)]_i = [\Grad f(x_k)]_i + \lambda = -[s_k]_i.
      \end{equation}
      \textbf{Subcase 1c:}
      Suppose that $[x_k]_i = 0$ and $[\Grad f(x_k)]_i + \lambda \leq 0$, so $[\Grad f(x_k)]_i \leq -\lambda$.  Then, $[\phi(x_k)]_i = 0$ and
      \begin{equation}\label{eq.result3}
        [\beta(x_k)]_i = [\Grad f(x_k)]_i + \lambda = -[s_k]_i.
      \end{equation}
      \textbf{Subcase 1d:}
      Suppose that $[x_k]_i < 0$ and $[\Grad f(x_k)]_i + \lambda < 0$, so $[\Grad f(x_k)]_i < -\lambda$.  Then, $[\beta(x_k)]_i = 0$ and
      \begin{equation}\label{eq.t2}
        [\phi(x_k)]_i = \max\{[\Grad f(x_k)]_i - \lambda, \min\{[x_k]_i,[\Grad f(x_k)]_i + \lambda\}\}.
      \end{equation}
      By \eqref{eq.case1}, it follows that $[x_k]_i > [\Grad f(x_k)]_i + \lambda$, which along with $[\Grad f(x_k)]_i + \lambda < 0$ means that the $\min$ in \eqref{eq.t2} evaluates as $[\Grad f(x_k)]_i + \lambda$.  Then, since $[\Grad f(x_k)]_i + \lambda > [\Grad f(x_k)]_i - \lambda$, the $\max$ in \eqref{eq.t2} yields
      \begin{equation}\label{eq.result4}
        [\phi(x_k)]_i = [\Grad f(x_k)]_i + \lambda = -[s_k]_i.
      \end{equation}
    Since Subcases 1a--1d exhaust all possibilities under \eqref{eq.case1}, we conclude from the results in \eqref{eq.result1}, \eqref{eq.result2}, \eqref{eq.result3}, and \eqref{eq.result4} that for Case 1 we have $[s_k]_i = -[\beta(x_k) + \phi(x_k)]_i$.

   \noindent\textbf{Case 2:}
    Suppose that
    \begin{equation}\label{eq.case2}
      [x_k - \Grad f(x_k)]_i < -\lambda \text{, meaning that } [x_k]_i < [\Grad f(x_k)]_i - \lambda.
    \end{equation}
    We claim that the analysis for this case is symmetric to that in Case 1 above, from which we may conclude that for this case we again have $[s_k]_i = -[\beta(x_k) + \phi(x_k)]_i$.

    \noindent\textbf{Case 3:}
    Suppose that
    \begin{equation}\label{eq.case3}
      [x_k - \Grad f(x_k)]_i \in [-\lambda,\lambda] \text{, meaning that } [x_k]_i \in [\Grad f(x_k)]_i + [-\lambda,\lambda].
    \end{equation}
    \textbf{Subcase 3a:}
      Suppose that $[x_k]_i > 0$.  Then, $[\beta(x_k)]_i = 0$ and, since $[x_k]_i > 0$ and \eqref{eq.case3} imply $[\Grad f(x_k)_i] > -\lambda$,
      \begin{equation}\label{eq.t3}
        [\phi(x_k)]_i = \min\{[\Grad f(x_k)]_i + \lambda, \max\{[x_k]_i,[\Grad f(x_k)]_i - \lambda\}\}.
      \end{equation}
      Since \eqref{eq.case3} also implies $[x_k]_i > [\Grad f(x_k)]_i - \lambda$, it follows along with $[x_k]_i > 0$ that the $\max$ in \eqref{eq.t3} evaluates as $[x_k]_i$.  Then, since \eqref{eq.case3} implies $[x_k]_i < [\Grad f(x_k)]_i + \lambda$, the $\min$ in \eqref{eq.t3} yields
      \begin{equation}\label{eq.result5}
        [\phi(x_k)]_i = [x_k]_i = -[s_k]_i.
      \end{equation}
      \textbf{Subcase 3b:}
      Suppose that $[x_k]_i = 0$.  Then, $[\phi(x_k)]_i = 0$ and, along under \eqref{eq.case3},
      \begin{equation}\label{eq.result6}
        [\beta(x_k)]_i = -[s_k]_i = 0.
      \end{equation}
      \textbf{Subcase 3c:}
      Suppose that $[x_k]_i < 0$.  We claim that the analysis for this case is symmetric to that in Subcase 3.a, from which we may conclude that for this subcase we again have
      \begin{equation}\label{eq.result7}
        [\phi(x_k)]_i = [x_k]_i = -[s_k]_i.
      \end{equation}
    Since Subcases 1.a--1.d exhaust all possibilities under \eqref{eq.case3}, we conclude from the results in \eqref{eq.result5}, \eqref{eq.result6}, and \eqref{eq.result7} that for Case 3 we have $[s_k]_i = -[\beta(x_k) + \phi(x_k)]_i$.
  The result follows as we have proved the desired result under all cases.
\end{proof}
\end{document}